\documentclass[reqno,11pt]{amsart}
\usepackage[dvipsnames,usenames]{color} 
\usepackage[toc,page]{appendix}

\usepackage{enumitem,stmaryrd}
 \usepackage{amsaddr}

\usepackage{hyperref}
\usepackage{graphicx}  
\usepackage[all]{xy} \xyoption{arc} \xyoption{color}

\usepackage{amsmath} 
\usepackage{amsthm} 
\usepackage{amsfonts}  
\usepackage{amssymb} 

\usepackage{verbatim}

\oddsidemargin  .25 in
\evensidemargin .25 in
\textwidth 6.0in

\newcommand{\set}[1]{ \{#1\} } 
   
\newcommand{\bset}[1]{ [#1] }

\newcommand{\pset}[1]{ (#1) }

\newcommand{\Pset}[1]{ \left(#1\right) } 
\newcommand{\norm}[1]{ \|#1\| }

\newcommand{\abs}[1]{ |#1| } 
\newcommand{\Abs}[1]{\left |#1\right| }

\newcommand{\cp}[1]{,_{#1}}

\def\hd{\bar{\partial}} 
\def\Div{ \operatorname{div} }

\def\local{l} 

\def\UL{U_\local}

\def\thetal{\theta_\local}

\setcounter{secnumdepth}{3}
 \setcounter{tocdepth}{1}

\numberwithin{equation}{section}

\newtheorem{theorem}{\sc Theorem}[section]
\newtheorem{lemma}{\sc Lemma}[section]
\newtheorem{proposition}{\sc Proposition}[section]
\theoremstyle{definition}
\newtheorem{remark}{\sc Remark}
\theoremstyle{definition}
\newtheorem{definition}{\sc Definition}[section]

\def\R{\mathbb R}

\def\n{\nonumber}

\def\p{\tilde p}

\def\R{\mathbb R}

\def\n{\nonumber}

\def\curl{\operatorname{curl}}

\def\R{\mathbb R}
\def\n{\nonumber}

\def\p{\tilde p}

\def\p{\partial}

\def\nabla{D}

\title[Splash and splat singularities for the 3-D Euler equations]
{On the finite-time splash and splat singularities for the 3-D free-surface Euler equations}

\author{Daniel Coutand and Steve Shkoller}

\address{\large Accepted for publication in Communications in Mathematical Physics}

\keywords{Euler, incompressible flow, blow-up, water waves, splash}

\email{D.Coutand@ma.hw.ac.uk}
\email{shkoller@math.ucdavis.edu}

\begin{document}

\begin{abstract}
We prove that the 3-D free-surface incompressible Euler equations with regular initial geometries and velocity fields  have solutions which can form a finite-time ``splash'' (or ``splat'') singularity first introduced in \cite{CaCoFeGaGo2011}, wherein the evolving 2-D hypersurface, the moving boundary of the fluid domain,  self-intersects at a point (or on surface).   Such singularities can occur when the crest of a breaking wave  falls unto its trough, or in the study of drop impact upon liquid surfaces.    Our approach is founded upon
the Lagrangian description of the free-boundary problem, combined with a novel approximation scheme of a finite collection
of local coordinate charts; as such we are able to analyze a rather general set of geometries for the evolving 2-D free-surface of
the fluid.   We do not assume the fluid is irrotational, and as such, our method can be used for a number of other fluid interface problems, including compressible flows, plasmas, as well as the inclusion of surface tension effects.
\end{abstract}

\maketitle

\tableofcontents

\section{Introduction}
\label{sec_introduction}

\subsection{The Eulerian description of the free-boundary problem}
For $0 \le t \le T$,
the evolution of a three-di\-men\-si\-o\-nal  incompressible fluid
with a moving free-surface  is modeled by the in\-com\-pres\-sib\-le Euler equations:
\begin{subequations}
  \label{euler}
\begin{alignat}{2}
u_t+ u\cdot Du + D p&=0  &&\text{in} \ \ \Omega(t) \,, \label{euler.a}\\
  {\operatorname{div}} u &=0
&&\text{in} \ \ \Omega(t) \,, \label{euler.b}\\
p &= 0 \ \ &&\text{on} \ \ \Gamma(t) \,, \label{euler.c}\\
\mathcal{V} (\Gamma(t))& = u \cdot n &&\ \ \label{euler.d}\\
u   &= u_0  \ \  &&\text{on} \ \ \Omega(0) \,, \label{euler.e}\\
   \Omega(0) &= \Omega_0\,.  && \label{ceuler.f}
\end{alignat}
\end{subequations}
The open subset
 $\Omega(t) \subset \mathbb{R}^3  $ denotes the changing volume occupied by the fluid,  $\Gamma(t):= \partial\Omega(t)$ denotes
 the moving free-surface, $ \mathcal{V} (\Gamma(t))$ denotes normal
 velocity of $\Gamma(t)$, and $n(t)$ denotes the exterior unit normal vector to the free-surface  $\Gamma(t)$.
  The vector-field $u = (u_1,u_2,u_3)$ denotes the Eulerian velocity
field, and $p$ denotes the pressure function. We use the notation $D=(\p_1,\p_2,\p_3)$ to denote the gradient operator. We have
normalized the equations to have all physical constants equal to 1.

This is a free-boundary partial differential equation to determine the velocity and pressure in the fluid, as well as the location and
smoothness of the a priori unknown free-surface.   In the case that the fluid is irrotational, $ \operatorname{curl} u =0$, the
coupled system of Euler equations (\ref{euler}) can be reduced to an evolution equation for the free-surface (with potential
flow in the interior), in which case (\ref{euler}) simplifies to the water waves equation.  {\it We do not make any irrotationality assumptions.}

We will prove that  the 3-D Euler equations (\ref{euler}) admit classical solutions which evolve regular initial data onto a state,
at finite-time $T>0$,  at which the free-surface self-intersects, and the flow map loses injectivity.   The self-intersection can occur
at a point, causing  a ``splash,'' or on a surface, creating a ``splat.''

\subsection{Local-in-time well-posedness} We begin with a brief history of the local-in-time existence theory for the free-boundary
incompressible Euler equations.
For the {\it irrotational} case of the water waves problem, and for
2-D fluids (and hence 1-D interfaces), the earliest local existence results
were obtained by Nalimov \cite{Na1974}, Yosihara \cite{Yo1982}, and 
Craig \cite{Cr1985} for initial data near equilibrium.  Beale, Hou, \&
Lowengrub \cite{BeHoLo1993} proved that the linearization  of the 2-D water wave 
problem is well-posed if the Rayleigh-Taylor sign condition
\begin{equation}\label{taylor}
\left.\frac{\p p}{\p n}\right|_{t=0} < 0 \ \ \ \text{ on } \Gamma|_{t=0} \,
\end{equation} 
is satisfied by the initial data (see \cite{Ra1878} and \cite{Taylor1950}).
Wu \cite{Wu1997} established local well-posedness for the 2-D 
water waves problem and showed that, due to irrotationality, the Taylor sign condition is satisfied.  
Later Ambrose \& Masmoudi \cite{AmMa2005}, proved local
well-posedness of the 2-D water waves problem as the limit of zero surface tension.
For 3-D fluids (and  \mbox{2-D} interfaces), Wu \cite{Wu1999} used Clifford analysis to prove local existence of the water waves problem with {\it infinite depth}, again showing that the Rayleigh-Taylor sign 
condition is always satisfied in the irrotational case by virtue of the maximum
principle holding for the potential flow.  Lannes \cite{La2005} provided a proof
for the {\it finite depth case with varying bottom}.     Recently, Alazard, Burq \& Zuily \cite{AlBuZu2012} have established low regularity solutions (below
the Sobolev embedding) for the
water waves equations.

 The first local well-posedness result for
the 3-D incompressible Euler equations without the irrotationality assumption was obtained by 
Lindblad \cite{Li2004}  in the case that the domain is diffeomorphic to the unit ball  using a Nash-Moser iteration.
In Coutand and Shkoller \cite{CoSh2007}, we obtained the local well-posedness result for {\it arbitrary initial geometries} that have $H^3$-class
boundaries and without derivative loss (this framework, employing local coordinate charts in the Lagrangian configuration, is ideally suited for the 
splash  and splat singularity problems that we study herein).      Shatah and Zeng \cite{ShZe2008} established a priori estimates
for this problem
using an infinite-dimensional geometric formulation, and  Zhang and Zhang proved well-poseness by extending
the complex-analytic method of Wu \cite{Wu1999} to allow for vorticity.  Again, in the latter case the domain was with infinite depth.

\subsection{Long-time existence}  It is of great interest to understand if solutions to the
Euler equations can be extended for all time when the data is sufficiently smooth and small, or if a finite-time
singularity can be predicted for other types of initial conditions.  

 Because of irrotationality,  the water waves problem does not suffer from vorticity concentration;  therefore, singularity formation
 involves only the loss of regularity of the interface.  In the case that the irrotational fluid is infinite in the horizontal directions,
 certain dispersive-type properties can be made use of.
 For sufficiently smooth and small data,
 Alvarez-Samaniego and Lannes \cite{AlLa2008} proved existence of solutions to the
water waves problem  on large time-intervals (larger than predicted by energy estimates), 
and provided a rigorous justification for a variety of  asymptotic regimes.   By constructing a transformation to remove
the quadratic nonlinearity, combined with decay estimates for the linearized problem (on the infinite half-space domain), Wu \cite{Wu2009} established  an almost global existence result (existence on time intervals which are exponential in the size of the data) for the 2-D water
waves problem with sufficiently small data.   Wu \cite{Wu2011} then proved global existence in 3-D for small data.   Using
the method of spacetime resonances,  Germain, Masmoudi, and Shatah \cite{GeMaSh2009} also established global existence
for the 3-D irrotational problem for sufficiently small data.    

\subsection{Splashing of liquids and the finite-time splash singularity} 

The study  of splashing, and in particular, of drop impact on liquid surfaces has a long history that goes back to
the end of the last century when Worthington \cite{Wo1894} studied the process by means
of single-flash photography.   Numerical studies show both fascinating and unexpected fluid behavior during the splashing
process (see, for example, O\~{g}uz \& Prosperetti \cite{OgPr1990}), with agreement from matched asymptotic analysis by
Howison,  Ockendon,  Oliver,  Purvis and  Smith \cite{HoOcOlPuSm2005}.

 The problem of rigorously  establishing a finite-time singularity for the fluid interface has recently
been explored for the 2-D water waves equations by Castro, C\'{o}rdoba, Fefferman, Gancedo,
and G\'{o}mez-Serrano in \cite{CaCoFeGaGo2011, CaCoFeGaGo2011b}, where it was shown that a smooth initial curve exhibits a finite-time
singularity via self-intersection at a point; they refer to this type of singularity as a ``splash'' singularity, and we will 
continue to use this terminology.  (We will give a precise definition of the  splash domain in our  3-D framework 
in Section \ref{subsec:splashdomain}  and we define the splat domain in Section \ref{sec9}.)
\begin{figure}[htbp]
\begin{center}
\includegraphics[scale = 0.4]{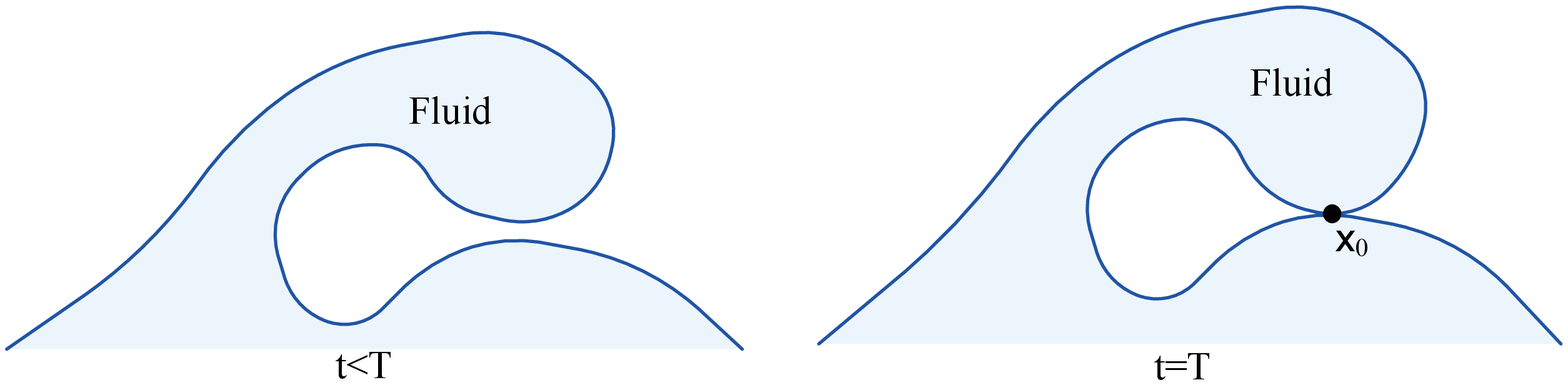}
\caption{The splash singularity wherein the top of the crest touches the trough at a point $x_0$ in finite time
$T$.}
\end{center}
\end{figure}

Their work follows earlier results  by
Castro, C\'{o}rdoba, Fefferman, Gancedo,
and L\'{o}pez-Fern\'{a}ndez \cite{CaCoFeGaLo2011a}  and
Castro, C\'{o}rdoba, Fefferman, Gancedo,
and L\'{o}pez-Fern\'{a}ndez \cite{CaCoFeGaLo2011b} 
for both the Muskat and water waves equations
wherein the authors proved that an initial curve which is graph, that satisfies the Rayleigh-Taylor sign condition,  reaches a regime in finite time
 in which it is no longer a graph and can become unstable due to a reversal of the sign in the Rayleigh-Taylor condition.

Herein,  we develop a new framework for analyzing the finite-time splash and splat singularity for \mbox{3-D} incompressible fluid flows
with vorticity.  Our motivation is to produce a general methodology which can also be applied to compressible fluids, as well
as to ionized fluids, governed by the equations of magnetohydrodynamics.  Our method is founded upon the transformation
of (\ref{euler}) into Lagrangian variables.  We are thus not restricted to potential flows, nor to any special geometries.  Furthermore,
our method of analysis does not, in any significant way, distinguish between flow in different dimensions.   While we present
our results for the case of 3-D fluid flow, they are equally valid in the 2-D case.

\subsection{Main result}
The main result of this paper states that there exist   initial domains $\Omega_0$ of Sobolev class $H^{4.5}$  together with initial
velocity vectors $u_0 \in H^{4}(\Omega_0)$ which satisfy the Rayleigh-Taylor sign condition (\ref{taylor}), such that after a
a finite-time $T>0$ the solution of the Euler equations reaches a  ``splash'' (or ``splat'') singularity.  At such a time $T$,  particles which
were separated at time $t=0$  collide at  a point $x_0$ (or on a surface $\Gamma_0$),   the flow map $\eta(T)$  loses  injectivity, and $\p[\Omega^c]$ forms
a cusp.  In short, $T$ is the time at which the crest of a 3-D wave
turns-over and touches the trough.       This statement is made precise in Theorems \ref{theorem_main} and \ref{theorem2}.

Note that the use of $H^{4.5}$-regularity for the domain $\Omega_0$ and $H^4(\Omega_0)$-regularity for velocity field $u_0$ is
due to the functional framework that we employ for the a priori estimates in Theorem \ref{thm_apriori}.   For 3-D incompressible
fluid flow, we find that this is the most natural functional setting; of course, we could also employ any $H^s$-framework for 
$s \ge 4.5$ or a H\"{o}lder space framework as well.

\subsection{The Lagrangian description}
  We transform the system (\ref{euler}) into
Lagrangian variables.
We let $\eta(x,t)$ denote the ``position'' of the fluid particle $x$ at time $t$.  Thus,
\begin{equation}
\nonumber
\begin{array}{c}
\partial_t \eta = u \circ \eta $ for $ t>0 $ and $
\eta(x,0)=x
\end{array}
\end{equation}
where $\circ $ denotes composition so that
$[u \circ \eta] (x,t):= u(\eta(x,t),t)\,.$
We set
\begin{align*}
v &= u \circ \eta   \text{ (Lagrangian velocity)},  \\
q&=p \circ \eta   \text{ (Lagrangian pressure)}, \\
A &= [D \eta]^{-1}  \text{ (inverse of the deformation tensor)}, \\
J &= \det [D \eta] \text{ (Jacobian determinant of the deformation tensor)}, \\
a &= J\, A \text{ (cofactor of the deformation tensor)}.
\end{align*}
Whenever $ \operatorname{div} u=0$, it follows that $\det D \eta =1$, and hence the cofactor matrix of $D \eta $ is
equal to $[D \eta] ^{-1} $, i.e., $a=A$.
Using  Einstein's summation convention, and using
the notation $F,_k$ to denote $\frac{\p F}{ \p x_k}$, the k$th$-partial derivative of $F$ for $k=1,2,3$,
the Lagrangian version of equations (\ref{euler})  is given on
the fixed reference domain $\Omega$ by
\begin{subequations}
\label{lageuler}
\begin{alignat}{2}
\eta(t) &= e + \int_0^t v\ \ && \text{ in } \Omega \times [0,T] \,, \label{lageuler.a0} \\
 v_t + A^T Dq &=0 \ \ && \text{ in } \Omega \times (0,T] \,, \label{lageuler.a} \\
\operatorname{div} _\eta v &=0 \ \ && \text{ in } \Omega \times [0,T] \,,\label{lageuler.b}  \\
q  &=0 \ \ && \text{ on } \Gamma \times [0,T] \,,\label{lageuler.c}  \\
(\eta,v)  &=(e,u_0) \ \  \ \ && \text{ in } \Omega \times \{t=0\} \,, \label{lageuler.e}
\end{alignat}
\end{subequations}
where $e(x)=x$ denotes the identity map on $\Omega$, and where the $i$th-component of $A^T Dq$ is $ A^k_i \, q,_k$.
($A^T$ denotes the transpose of the matrix $A$.)  By definition of the Lagrangian flow $\eta(t)$,  the free-surface is given by
$$
\Gamma(t) = \eta(t)(\Gamma) \,.
$$
We will also use the notation $\eta(t,\Gamma) = \Gamma(t)$, and $\eta(t, \Omega)= \Omega(t)$.
The Lagrangian divergence is defined by $ \operatorname{div} _ \eta v = A^j_i v^i,_j$.  Solutions to (\ref{lageuler}) which are sufficiently smooth to ensure that $\eta(t)$ are diffeomorphisms,  give solutions to (\ref{euler}) via the change of variables indicated above.

\subsection{The splash singularity for other hyperbolic PDEs}
Our methodology can be applied to a host of other time-reversible PDEs that have a local well-posedness theorem.
\begin{enumerate}
\item {\it Surface tension.}  Our main result also holds if surface tension is added to the Euler equations.  In this case
equation (\ref{lageuler.c}) is replaced with
$$
q n = - \sigma \Delta _g( \eta) \,,
$$
where  $ \sigma >0$ denotes the surface tension parameter, $ n$ is the outward unit-normal to $\Gamma(t)$, $ \Delta _g$ denotes the surface Laplacian with respect to the
induced metric $g$ where $g_{ \alpha \beta } = \eta,_ \alpha  \cdot \eta,_ \beta $.
This is the Lagrangian version of the so-called Laplace-Young boundary condition for pressure: $p= \sigma H$, where
$H$ is the mean curvature of the free-surface $\Gamma(t)$.    We have established well-posedness for this case in
\cite{CoSh2007}.
The only modifications required for the case of positive surface tension
is to consider initial domains $\Omega_0$ of Sobolev class $H^6$ with initial velocity fields $u_0 \in H^{4.5}(\Omega_0)$.   Our main theorem then provides for a finite-time splash singularity for the case that $\sigma >0$.

\item {\it Physical vacuum boundary of a compressible gas.}  We can also consider the evolution of the free-surface compressible
Euler equations which model the expansion of a gas into vacuum.  We established the well-posedness of this system of 
degenerate and characteristic multi-D conservation laws in \cite{CoSh2010b}.    In this setting, our methodology shows that
there exist initial domains $\Omega_0$ of class $H^4$, initial velocity fields $u_0 \in H^{3.5}(\Omega_0)$, and initial density
functions $\rho_0 \in H^4(\Omega_0)$ such after time $T>0$, a splash singularity if formed by the evolving vacuum interface.

\item {\it Other physical models.}   In fact, we can establish existence of a  finite-time splash singularity  for a wide class of hyperbolic systems of PDE which evolve a free-boundary  in a sufficiently smooth functional framework, and
which are locally well-posedness.  Examples of equations (not mentioned above) include nonlinear elasticity
and magnetohydrodynamics.  
\end{enumerate}

%
%

\section{Notation, local coordinates, and some preliminary results} \label{sec:notation} 
\subsection{Notation for the gradient vector} \label{sec:grad-horiz-deriv}

Throughout the paper the symbol $D$ will be used to denote the three-dimensional gradient vector 
$
D=\left( \frac{\p}{\p x_1}\,,  \frac{\p}{\p x_2}\,,  \frac{\p}{\p x_3}  \right)
$.
\subsection{Notation for partial differentiation and Einstein's summation convention} \label{sec:notat-part-diff}

The $k$th partial derivative of $F$ will be denoted by $F\cp{k}=\frac{
\partial F}{
\partial x_k}$. Repeated Latin indices $i,j,k$, etc., are summed from $1$ to $3$, and repeated Greek indices $\alpha, \beta, \gamma$, etc., are summed from $1$ to $2$. For example, $F\cp{ii}=\sum_{i=1}^3\frac{\p^2F}{\p x_i\p x_i}$, and $F^i\cp{\alpha} I^{\alpha\beta} G^i\cp{\beta}=\sum_{i=1}^3\sum_{\alpha=1}^2\sum_{\beta=1}^2\frac{\p F^i}{\p x_\alpha} I^{\alpha\beta} \frac{\p G^i}{\p x_\beta}$.

\subsection{The divergence and curl operators} \label{sec:curl-diverg-oper} For a vector field $u$ on $\Omega$, we set
\begin{align*}
	\Div u & =u^1\cp{1}+u^2\cp{2}+u^3\cp{3} \,, \\
	\curl u& =\Pset{u^3\cp{2}-u^2\cp{3},u^1\cp{3}-u^3\cp{1}, u^2\cp{1} -u^1\cp{2}}. 
\end{align*}
With the permutation symbol $\varepsilon_{ijk}$ given by
{\tiny
$
	\varepsilon_{ijk}= \left\{
	\begin{array}{ll}
		\hfill 1,& \text{even permutation of $\set{1,2,3}$,}\\
		\hfill -1,& \text{odd permutation of $\set{1,2,3}$,}\\
		\hfill 0,&\text{otherwise}\,,
	\end{array}\right.
$}
the $i$th-component of $ \operatorname{curl} u$ is given by
\begin{align*}
	\pset{\curl u}_i=\varepsilon_{ijk} u^k\cp{j}. 
\end{align*}
\subsection{The Lagrangian divergence and curl operators}   We  will write $ \operatorname{div} _\eta v =  \operatorname{div} u \circ 
\eta $ and $ \operatorname{curl} _\eta v = \operatorname{curl} u \circ \eta$.  From the chain rule, 
\begin{align*}
	\Div_\eta v=A_r^sv^r\cp{s} \  \text{ and }  \
	\pset{\curl_\eta v}_i =\varepsilon_{ijk} A_j^s v^k\cp{s} \,.
\end{align*}

%

\subsection{Local coordinates near $\Gamma$ } \label{sec::charts}   
In Appendix A, we establish the a priori estimates for solutions of the 3-D free-surface Euler equations (following
our   local well-posedness theory  in \cite{CoSh2007,CoSh2010}).  Such solutions 
evolve a moving two-dimensional surface which is of Sobolev class
 $H^4$.  This boundary regularity implies a three-dimensional domain of class $H^{4.5}$, constructed via a collection
 of $H^{4.5}$-class local coordinates.

Let $\Omega\subset \mathbb{R}^3  $ denote an open subset of class $H^{4.5}$ and let $\{U_l\}_{l=1}^K$ denote an open covering of $\Gamma=\p\Omega$, such that for each $l\in \{1,2,\dots,K\}$, with 
\begin{align*}
	B&=B(0,1),\text{ denoting the open ball of radius $1$ centered at the origin and}, \\
	B^+&=B\cap\set{x_3>0}, \\
	B^0&=\overline B\cap\set{x_3=0}, 
\end{align*}
there exist $H^{4.5}$-class charts $\thetal$ which satisfy 
\begin{subequations}
\label{normalchart}
\begin{align}
	\thetal\colon B\to\UL\ &\text{ is an $H^{4.5}$ diffeomorphism}, \\
	\thetal(B^+)&=\UL\cap\Omega, \ \ \
	\thetal(B^0)=\UL\cap\Gamma\,.
\end{align}
\end{subequations}
\begin{figure}
	[here] \centering 
	\includegraphics[scale = 0.8]{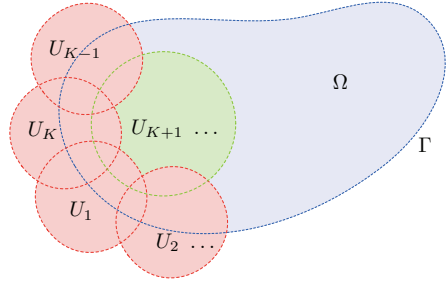}
	\caption{Indexing convention for the open cover $\set{U_\local}_{\local=1}^L$ of $\Omega$.} 
\end{figure}
Next, for $L>K$, we let $\set{\UL}_{\local=K+1}^L$ denote a family of open sets contained in $\Omega$ such that $\set{\UL}_{\local=1}^L$ is an open cover of $\Omega$, and we such that there exist diffeomorphisms $\theta_l:B \to U_l$.

\subsection{Tangential (or horizontal) derivatives}\label{sec: tangential derivative} On each boundary
chart  $\UL\cap\Omega$, for $1\le\local\le K$, we let $\bar \p$ denote the \textit{tangential derivative} whose
 $\alpha$th-component given by
\begin{align*}
	\bar \p_ \alpha  f=\Pset{\frac{\p}{\p x_\alpha}\bset{f\circ\thetal}}\circ\thetal^{-1}=\Pset{\pset{D f\circ\thetal}\frac{\p\thetal}{\p x_\alpha}}\circ\thetal^{-1} \,.
\end{align*}
For functions defined directly on  $B^+$, $\hd$ is simply the horizontal derivative $\hd = (\partial_{x_1}, \partial_{x_2})$.

\subsection{Sobolev spaces} \label{sec:diff-norms-open}

For integers $k\ge0$ and a domain $U$ of $\R^3$, we define the Sobolev space $H^k(U)$ $\pset{H^k(U;\R^3)}$ to be the completion of $C^\infty(\bar{U})$ $\pset{C^\infty(\bar{U};\R^3)}$ in the norm 
\begin{align*}
	\norm{u}_{k,U}^2=\sum_{\abs{a}\le k}\int_U \Abs{D^a u(x) }^2 , 
\end{align*}
for a multi-index $a\in \mathbb{Z}  ^3_+$, with the convention that $\abs{a}=a_1+a_2+a_3$.  When there is no possibility for confusion,
we write $\| \cdot \|_k$ for $\norm{\cdot }_{k,U}$.
For real numbers $s\ge0$, the Sobolev spaces $H^s(U)$ and the norms $\label{n:interior norm}\norm{\cdot}_{s,U}$ are defined by interpolation. 
We will write $H^s(U)$ instead of $H^s(U;\R^3)$ for vector-valued functions.

\subsection{Sobolev spaces on a surface $\Gamma$} \label{sec:sobolev-spaces-gamma} For functions $u\in H^k(\Gamma)$, $k\ge0$, we set 
\begin{align*}
	\abs{u}_{k,\Gamma}^2=\sum_{\abs{a}\le k } \int_\Gamma \Abs{ \hd^a u(x)}^2, 
\end{align*}
for a multi-index $a\in \mathbb{Z}  ^2_+$. For real $s\ge0$, the Hilbert space $H^s(\Gamma)$ and the boundary norm $\label{n:boundary-norm}\abs{\cdot}_s$ is defined by interpolation. The negative-order Sobolev spaces $H^{-s}(\Gamma)$ are defined via duality. That is, for real $s\ge0$, 
$H^{-s}(\Gamma)=H^s(\Gamma)' $.

\subsection{The norm of a standard domain $\Omega$}
\begin{definition} \label{def1}
A domain $\Omega$ is of class $H^{4.5}$ if for each $l=1,...,L$, each diffeomorphism $\theta_l$ is of class $H^{4.5}$.
The $H^{4.5}$-norm of $\Omega$ is defined by
\begin{equation}\label{norm1}
\left( \sum_{l=1}^K \|\theta_l\|^2_{4.5,B^+} + \sum_{l=K+1}^L \|\theta_l\|^2_{4.5,B}\right)^2\,.
\end{equation} 
In particular if $e:\Omega\to\Omega$ is the identity map, then $\|e\|_{4.5, \Omega }$ is given by (\ref{norm1}).
\end{definition} 
We can, of course, replace $H^{4.5}$ with any $H^s$, $s > 2.5$ to define domains $\Omega$ of class $H^s$.

\subsection{Local well-posedness for the free-surface Euler problem}
\begin{theorem}[Coutand and Shkoller \cite{CoSh2007}]\label{thm_local} With $E (t)  $ given by (\ref{Energy}), 
suppose that $E(0) \le M_0$ and that the initial pressure function satisfies the Rayleigh-Taylor sign condition.  Then there
exists  a solution to (\ref{euler}) on $[0,T]$ where $T>0$ depends $E(0)$, and $\sup_{t\in[0,T]} E(t) \le 2M_0$.
Moreover, the solution satisfies 
{\small
$$ \eta \in C([0,T]; H^{4.5}(\Omega))\,, \ v \in C([0,T]; H^{4}(\Omega)) \,, \  \operatorname{curl} _\eta v
C([0,T]; H^{4.5}(\Omega)) \,, \  v_t \in C([0,T]; H^{3.5}(\Omega)) \,.
$$}
\end{theorem}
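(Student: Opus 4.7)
The plan is to work entirely on the fixed reference domain in the Lagrangian formulation (\ref{lageuler}), construct solutions as strong limits of a family of smoothed problems indexed by $\kappa > 0$, and close the argument through $\kappa$-independent higher-order energy estimates built from the collection of local charts in Section \ref{sec::charts}. In each boundary chart $U_\local \cap \Omega$, pulled back to $B^+$ via $\theta_\local$, define a horizontal convolution acting only in $(x_1,x_2)$; in the interior charts use a standard Friedrichs mollifier; glue these via a partition of unity subordinate to $\{U_\local\}_{\local=1}^L$ to obtain a smoothing operator $\Lambda_\kappa$. Set $\eta^\kappa(t) = e + \int_0^t \Lambda_\kappa v^\kappa$, $A^\kappa = [D\eta^\kappa]^{-1}$, and consider the approximate system
\begin{equation}
v^\kappa_t + (A^\kappa)^T Dq^\kappa = 0, \qquad \operatorname{div}_{\eta^\kappa} v^\kappa = 0, \qquad q^\kappa|_\Gamma = 0,
\end{equation}
for which the pressure is determined by a linear elliptic problem of Neumann type; for fixed $\kappa$, short-time existence follows by a standard contraction-mapping argument in an appropriate Sobolev space.

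The central task is the $\kappa$-uniform a priori estimate for $E(t)$. Apply four tangential derivatives $\bar\partial^4$ to the momentum equation, test against $\bar\partial^4 v^\kappa$, and integrate by parts; the interior terms are controlled, modulo commutators, by $P(E(t))$. The boundary contribution is the delicate one: using $q^\kappa|_\Gamma = 0$ to expand $\bar\partial^4 q^\kappa$ to leading order as $(\partial q^\kappa/\partial n)\,\bar\partial^4(\eta^\kappa\cdot n)$ and integrating in time produces
\begin{equation}
\frac{1}{2}\int_\Gamma \left(-\frac{\partial q^\kappa}{\partial n}\right)\bigl|\bar\partial^4(\eta^\kappa\cdot n)\bigr|^2 \, dS \; \Big|_0^t,
\end{equation}
which is coercive by virtue of the Rayleigh-Taylor sign condition $-\partial q/\partial n > 0$, propagated by continuity on a short time interval. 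The remaining components of the energy are recovered by a Hodge-type argument: the constraint $\operatorname{div}_{\eta^\kappa} v^\kappa = 0$ together with a transport equation for $\operatorname{curl}_{\eta^\kappa} v^\kappa$ gives full control of divergence and curl, which combined with the tangential $\bar\partial^4$-bound elliptically upgrades to the full norms $\|\eta^\kappa\|_{4.5}$ and $\|v^\kappa\|_4$; this yields $\frac{d}{dt} E^\kappa \le P(E^\kappa)$ and hence $\sup_{[0,T]} E^\kappa \le 2M_0$ with $T>0$ depending only on $M_0$.

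The main obstacle will be the careful management of the commutator terms generated by $\Lambda_\kappa$ and by the chart maps $\theta_\local$ (through which $\bar\partial$ becomes a genuine tangential derivative only after pull-back). One must in particular establish commutator estimates such as $\|[\Lambda_\kappa,\bar\partial^4]((A^\kappa)^T Dq^\kappa)\|_0 \le P(E^\kappa)$ that do not lose a power of $\kappa$, and show that the boundary cancellations exploiting $q^\kappa|_\Gamma = 0$ survive the smoothing. Once uniform bounds are secured, weak-$*$ compactness in the top-order spaces combined with Aubin-Lions strong compactness in the lower-order norms extracts a subsequence converging to a limit $(\eta,v,q)$ that solves (\ref{lageuler}) with the claimed regularity and satisfies $\sup_{[0,T]} E(t) \le 2M_0$; uniqueness follows from a standard energy estimate on the difference of two solutions, again invoking the Rayleigh-Taylor sign condition.
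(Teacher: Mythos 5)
Your proposal follows essentially the same route as the paper: the construction via horizontal convolution-by-layers in boundary charts glued by a partition of unity (the scheme of \cite{CoSh2007}, which the appendix explicitly invokes), the $\bar\partial^4$ tangential energy estimate whose boundary contribution is made coercive by the Rayleigh--Taylor sign condition, and the recovery of the full norms from divergence, curl, and normal-trace control via the Hodge-type estimate of Proposition \ref{prop1} --- precisely the structure of Propositions \ref{curl_est}, \ref{div_est}, and \ref{energyest} and Theorems \ref{thm_apriori} and \ref{appendix_thm2}. The only slip is that the Lagrangian pressure here solves the Dirichlet problem (\ref{qpress}), not a problem of Neumann type; this is immaterial to the argument.
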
 
\section{The splash domain $\Omega_s$ and its approximation by standard domains $\Omega^ \epsilon $}
\label{sec:splashdomain}
\subsection{The splash domain}

\subsubsection{The meta-definition}
  A {\it splash domain}  $\Omega_s$ is an open  and bounded subset of ${\mathbb R}^n$   which is locally on one side of its boundary, except at a point $x_0\in\partial\Omega_s$, where the domain is locally on each side of the tangent plane at $x_0$.
The domain  $\Omega_s$ satisfies the cone property and can be approximated (in sense to be made precise below) by
domains which  have a smooth boundary.

We observe that the Sobolev spaces $H^r(\Omega_s)$ are defined
for the splash domain  $\Omega_s$ in the same way as for a   domain which is locally on one side of its boundary; moreoever,
as the bounded splash domain $\Omega_s$ satisfies the cone property, interpolation theorems and most of the imporant
Sobolev embedding results hold (see, for examples, Chapters 4 and 5 of Adams~\cite{Adams1978}).
 
%

The main difference between bounded splash domains with the cone property and domains that have the
uniform $H^r$-regularity property  is with regards to  trace theorems:  For the splash domain $\Omega_s$,
a function $f$ in $H^{4.5}(\Omega_s)$ has a trace in $H^4(\Gamma')$ for any smooth subset $\Gamma'$ of $\partial\Omega_s$ whose closure does not contain $x_0$. At $x_0$ there is not a well-defined (global) trace for $f$, in the sense of coming from both sides of the tangent plane
at $x_0$, although  it is indeed possible to define {\it local traces} for $f$ at $x_0$ with respect to each of the  local coordinate charts containing $x_0$.

\subsubsection{The definition of the splash domain}\label{subsec:splashdomain}

\begin{enumerate}
\item
We suppose that $x_0  \in \Gamma:= \partial \Omega_s$ is the unique boundary self-intersection point,
 i.e., $\Omega_s$ is locally on each side of the tangent plane to $\partial\Omega_s=\Gamma_s$ at $x_0$.
 For all other boundary points, the domain is locally on one side of its boundary.   Without loss of
 generality, we suppose that
the tangent plane at $x_0$ is the horizontal plane $x_3-(x_0)_3=0$. 

\item We let $U_0$ denote an open neighborhood of $x_0$ in $ \mathbb{R}  ^3$, and then choose an additional $L$ open
sets $\{U_l\}_{l=1}^L$ such that the collection  $\{U_l\}_{l=0}^K$ is an open cover of $\Gamma_s$, and $\{U_l\}_{l=0}^L$ is an open cover of $\Omega_s$ and such that there exists a
sufficiently small open subset $ \omega \subset U_0$ containing $x_0$ with the property that 
$$\overline\omega \cap \overline{U_l} = \emptyset \ \text{ for all } \ l=1,...,L \,.$$
We set
\begin{align*} 
U_0^+ = U_0 \cap \Omega_s \cap \{ x_3 > (x_0)_3 \} \ \text{ and } U_0^- = U_0 \cap \Omega_s \cap \{ x_3 < (x_0)_3 \} \,.
\end{align*} 
Additionally, we assume that $\overline{U_0}\cap\overline{\Omega_s}\cap\{x_3=(x_0)_3\}=\{x_0\}$, which implies in particular that $U_0^+$ and $U_0^-$ are connected.

\begin{figure}[htbp]
\begin{center}
\includegraphics[scale = 0.4]{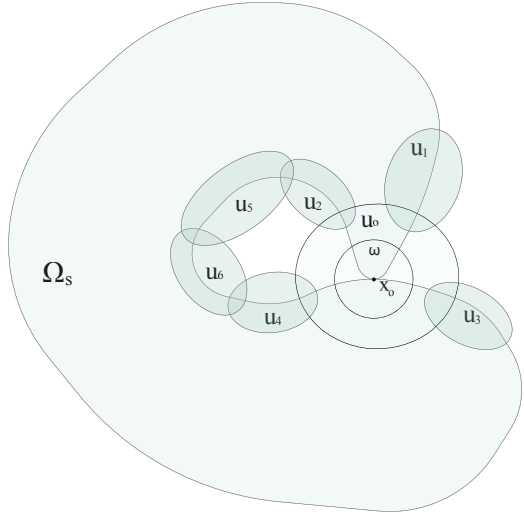}
\caption{Splash domain $\Omega_s$, and the collection of open set $\{U_0,U_1,U_2,...,U_K\}$ covering $\Gamma$.}
\end{center}
\end{figure}

\item For each $l\in \{1,...,K\}$, there exists an  $H^{4.5}$-class diffeomorphism $\theta_l$  satisfying
\begin{gather}
\theta_l : B:=B(0,1) \rightarrow U_l \nonumber \\
U_l \cap \Omega_s = \theta_l ( B^+ )
\ \text{ and } \ \overline{U_l} \cap \Gamma_s = \theta_l ( B^0 ) \,, 
\nonumber
\end{gather}
where 
\begin{align*} 
B^+ &=\{(x_1,x_2,x_3)\in B:  x_3>0\} \,, \\
B^0 &=\{(x_1,x_2,x_3)\in \overline B: x_3=0\}\,.
\end{align*} 

\item For $L > K$, let $\{U_l\}_{l=K+1}^{L}$ denote a family of open sets 
contained in $\Omega_s$ such that 
$\{U_l\}_{l=0}^{L}$ is an open cover of $\Omega_s$, and for $l\in \{K+1,...,L\}$, $\theta_l : B \to U_l$ is an
$H^{4.5}$ diffeormorphism.

\item To the open set $U_0$ we associate two $H^{4.5}$-class diffeomorphisms $\theta_+$ and $\theta_-$ of $B$ onto $U_0$ with the following properties:

\begin{alignat*}{2}
\theta_+(B^+) &= U_0^+ \,,          \qquad \qquad           && \theta_-(B^+)= U_0^-   \,,   \\
\theta_+(B^0) & = \overline{U_0^+}\cap \Gamma_s\,,       &&  \theta_-(B^0) = \overline{U_0^-}\cap \Gamma_s\,,
\end{alignat*}
such that
\begin{equation}\nonumber
\{x_0\}=\theta_+(B^0)\cap\theta_-(B^0)\,,
\end{equation} 
and
\begin{equation}\nonumber
\theta_+(0)=\theta_-(0)=x_0\,.
\end{equation}  
We further assume that 
$$ \overline{\theta_\pm(B^+\cap B(0,1/2))} \cap \overline{\theta_l(B^+)} = \emptyset \text{ for } l=1,...,K \,,$$
and
$$ \overline{\theta_\pm(B^+\cap B(0,1/2))} \cap \overline{\theta_l(B)} = \emptyset \text{ for } l=K+1,...,L \,.$$
\end{enumerate}

 \begin{definition}[Splash domain $\Omega_s$]\label{def:splashdomain} 
 We say that $\Omega_s$ is a splash domain, if it is defined by a collection of
 open covers $\{U_l\}_{l=0}^L$ and associated maps $\{\theta_\pm, \theta_1, \theta_2,...,\theta_L\}$ satisfying the
 properties (1)--(5) above.   Because each of the maps is an $H^{4.5}$ diffeomorphism, we say
that the splash domain $\Omega_s$ defines a self-intersecting {\it generalized} $\bf H^{4.5}$-domain.
 \end{definition} 

\subsection{A sequence of standard domains approximating the splash domain}
We  approximate the two distinguished charts $\theta_-$ and $\theta_+$ 
by charts $\theta_-^\epsilon$ and $\theta_+^\epsilon$ in such a way as to ensure that
$$
\theta_-^ \epsilon (B^0) \cap \theta_+^ \epsilon (B^0) = \emptyset \ \ \forall \ \epsilon >0\,,
$$
and which satisfy
$$
\theta_-^ \epsilon \to \theta_- \ \text{ and } \ \theta_+^ \epsilon \to \theta_+  \ \text{ as } \ \epsilon \to 0 \,.
$$

We choose $\mathfrak{r}>0$ sufficiently small so that
$$
\theta_-(B^+(0, 2\mathfrak{r})) \subset \omega \ \text{ and } \ \theta_+(B^+(0, 2\mathfrak{r})) \subset \omega \,,
$$
and then we let $ \psi \in \mathcal{D} ( B(0, \mathfrak{r}  ))$ denote a smooth bump-function satisfying
 $0\le\psi \le 1$ and $\psi(0)=1$.   For 
$\epsilon>0$ taken small enough, we define 
\begin{align*} 
\theta_-^\epsilon(x)&=\theta_-(x)-\epsilon\  \psi(x)\, \bf e_3\,, \\
\theta_+^\epsilon(x)&=\theta_+(x)+\epsilon\ \psi(x)\, \bf e_3\,,
\end{align*} 
where ${\bf e_3} =(0,0,1)$ denotes the vertical basis vector of the standard basis $\bf e_i$ of $ \mathbb{R}  ^3$.   By 
choosing  $\psi \in \mathcal{D} ( B(0, \mathfrak{r} ))$, we ensure that the modification of the domain is localized to
a small neighborhood of $x_0$ and away from the boundary of $U_0$ and the image of the other maps $\theta_l$.
\begin{figure}[htbp]
\begin{center}
\includegraphics[scale = 0.55]{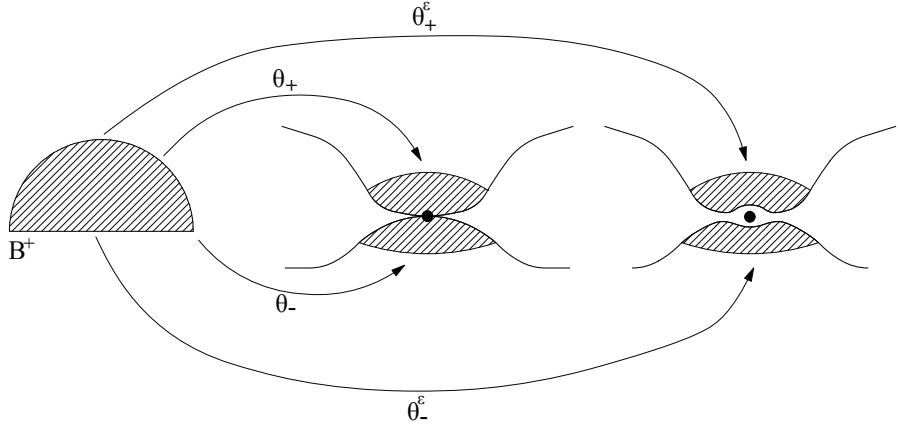}
\caption{The black dot denotes the point $x_0$ where the boundary self-intersects (middle).  For $ \epsilon >0$, the approximate domain $\Omega^ \epsilon $ does not intersect itself (right).}
\end{center}
\end{figure}
 Then, for $\epsilon>0$  sufficiently small,  
 $$
 \theta_-^ \epsilon (\overline{B^+}) \cap  \theta_+^ \epsilon (\overline{B^+})  = \emptyset \,.
 $$
 Since the maps $\theta_\pm^ \epsilon $ are a modification of the maps $\theta_\pm$ in a very small neighborhood
 of $0 \in B$, we have that for $\epsilon>0$  sufficiently small,
 $$
  \theta_\pm^ \epsilon (B^+\cap B(0,1/2)) \cap \theta_l(B^+) = \emptyset \text{ for } l=1,...,K \,,
  $$
and
$$ \theta_\pm^ \epsilon (B^+\cap B(0,1/2)) \cap \theta_l(B) = \emptyset \text{ for } l=K+1,...,L \,.$$
 For $\l \in \{1,...,L\}$ we set $\theta_l^ \epsilon = \theta_l$.     Then $\theta_-^ \epsilon : B^+ \to U_0$, $\theta_+^ \epsilon : B^+ \to U_0$,
 and $\theta_l^ \epsilon : B^+ \to U_l$, $l \in \{1,...,K\}$,  $\theta_l^ \epsilon : B \to U_l$, $l \in \{K+1,...,L\}$, is a collection of $H^{4.5}$ coordinate charts as given in Section \ref{sec::charts},
 and so we have the following
 
 \begin{lemma} [The approximate domains $\Omega^ \epsilon $]\label{approx_domain} For each $ \epsilon >0$ sufficiently
 small,  the set $\Omega^\epsilon$, defined by the local charts $\theta_-^\epsilon:B^+ \to U_0$,  $\theta_+^\epsilon:B^+ \to U_0$,
 and $\theta_l^ \epsilon :B^+ \to U_l$, $l \in \{1,...,K\}$, $\theta_l^ \epsilon :B \to U_l$, $l \in \{K+1,...,L\}$
 is a domain of class $H^{4.5}$, which is locally on one side of its $H^4$ boundary.
 \end{lemma}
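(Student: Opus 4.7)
The plan is to verify that the collection $\{\theta_-^\epsilon, \theta_+^\epsilon, \theta_1^\epsilon, \ldots, \theta_L^\epsilon\}$ fulfills the axioms of Section \ref{sec::charts} for $\epsilon$ sufficiently small, which will immediately yield that $\Omega^\epsilon$ is of class $H^{4.5}$ by Definition \ref{def1}. There are three items to check: (i) each $\theta_\pm^\epsilon$ is a genuine $H^{4.5}$ diffeomorphism of $B$ onto $U_0$; (ii) the closed images $\theta_-^\epsilon(\overline{B^+})$ and $\theta_+^\epsilon(\overline{B^+})$ are disjoint, so that the boundary of $\Omega^\epsilon$ has no self-intersection; and (iii) the perturbed images $\theta_\pm^\epsilon(B^+ \cap B(0,1/2))$ remain disjoint from the unchanged charts $\theta_l^\epsilon = \theta_l$. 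Because each chart is $H^{4.5}$, the boundary $\partial\Omega^\epsilon$ will then be $H^4$ by the standard trace theorem.

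For (i), the perturbation $\pm\epsilon\psi\,\mathbf{e}_3$ is smooth and compactly supported, hence in $H^{4.5}(B)$ with norm $O(\epsilon)$, so $\theta_\pm^\epsilon \in H^{4.5}(B)$. The Sobolev embedding $H^{4.5}(B)\hookrightarrow C^2(\overline B)$ ensures that $D\theta_\pm$ is continuous on the compact set $\overline{B}$, and since $\theta_\pm$ are diffeomorphisms, $\det D\theta_\pm$ is uniformly bounded away from zero there. The difference $D(\theta_\pm^\epsilon - \theta_\pm) = \pm\epsilon\,\mathbf{e}_3 \otimes D\psi$ is $O(\epsilon)$ in $C^0(\overline B)$, so for $\epsilon$ small enough $\det D\theta_\pm^\epsilon>0$ uniformly, giving local invertibility; global injectivity on $\overline{B}$ then follows from a standard small-$C^1$-perturbation argument, using that $\theta_\pm^\epsilon$ coincides with $\theta_\pm$ outside the compact set $\spt\psi \subset B(0,\mathfrak r)$. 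For (iii), since $\theta_\pm(\overline{B(0,\mathfrak r)}\cap\overline{B^+}) \subset \omega$ and the distance from $\omega$ to each $\overline{U_l}$ is strictly positive, for $\epsilon$ small enough $\theta_\pm^\epsilon(\overline{B(0,\mathfrak r)}\cap\overline{B^+})$ remains in a slightly enlarged neighborhood of $\omega$ and stays disjoint from each $\overline{U_l}$; outside $B(0,\mathfrak r)$ the maps are unperturbed, so the original disjointness persists.

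The key obstacle is (ii), and this is where the vertical shift is decisive. From the assumption $\overline{U_0}\cap\overline{\Omega_s}\cap\{x_3=(x_0)_3\}=\{x_0\}$ in the splash domain construction, together with $\overline{U_0^+}\subset\overline{U_0}\cap\overline{\Omega_s}$, one deduces that $\overline{U_0^+}\cap\{x_3=(x_0)_3\}=\{x_0\}$; equivalently, $[\theta_+(x)]_3\ge (x_0)_3$ on $\overline{B^+}$, with equality exactly at $x=0$. Adding the perturbation $\epsilon\psi(x)\ge 0$ with $\psi(0)=1$ then yields
\begin{equation*}
[\theta_+^\epsilon(x)]_3 = [\theta_+(x)]_3 + \epsilon\,\psi(x) > (x_0)_3 \quad \text{for all } x \in \overline{B^+},
\end{equation*}
since at $x=0$ the shift alone contributes $+\epsilon$, while at $x\ne 0$ the unshifted third component is already strictly greater than $(x_0)_3$. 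The symmetric argument gives $[\theta_-^\epsilon(x)]_3 < (x_0)_3$ on $\overline{B^+}$, so the two closed images lie on strictly opposite sides of the horizontal plane $\{x_3=(x_0)_3\}$ and are consequently disjoint. With (i)--(iii) established, the collection of charts satisfies Definition \ref{def1}, and because the only previously self-intersecting boundary point has been separated by a vertical distance of $2\epsilon$, $\Omega^\epsilon$ is locally on one side of its $H^4$ boundary.
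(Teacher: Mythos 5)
Your proposal is correct and follows essentially the same route as the paper, which states the lemma as an immediate consequence of the construction (the disjointness of $\theta_-^\epsilon(\overline{B^+})$ and $\theta_+^\epsilon(\overline{B^+})$, the non-interaction with the unperturbed charts $\theta_l$, and the uniform $H^{4.5}$ bounds); you have simply supplied the details the paper leaves implicit, and your soft "opposite open half-spaces" argument for disjointness, resting on $\overline{U_0^+}\cap\{x_3=(x_0)_3\}=\{x_0\}$, is a valid qualitative version of the paper's quantitative estimate (\ref{verticallocal}), which is only needed later in Section \ref{sec7}.
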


By choosing  $0<r_0<{\frac{1}{2}} $ such that $1\ge\psi\ge\frac{1}{2}$ in $B(0,r_0)$, we see that
$$
\left|(\theta^ \epsilon _-(x) -\theta^ \epsilon _+(y))\cdot {\bf e_3}\right|\ge \epsilon\  \text{ for any } \ x,y \in B^+\cap B(0,r_0)\,.
$$
With  $r_0$ chosen, due to the fact that by assumption (2) the images of $\theta_-$ and $\theta_+$ only intersect the plane $\{x_3=(x_0)_3\}$ at the point
$x_0$, there exists $\delta(r_0)>0$ such that $(\theta^ \epsilon_-(x) -x_0)\cdot {\bf e_3}<-\delta(r_0)$ and 
$(\theta^ \epsilon_+(x) -x_0)\cdot {\bf e_3}>\delta(r_0)$ for all $x\in B^+$ with $|x|\ge r_0$. This, in turn,  implies that if 
$x\in B^+$ with $|x|\ge r_0$ and $y\in B^+$, we then have that
$$
\left|(\theta^ \epsilon _-(x) -\theta^ \epsilon _+(y))\cdot {\bf e_3}\right|\ge \delta(r_0)-2\epsilon\ge \epsilon \ \text{  if } \ \epsilon\le \frac{\delta(r_0)}{3} \,.
$$
We have therefore established the following fundamental inequality:  for $0<\epsilon\le  \frac{\delta(r_0)}{3}$,
 \begin{equation}
 \label{verticallocal}
 \forall (x,y)\in B^+\times B^+\,,\ |(\theta^ \epsilon _-(x) -\theta^ \epsilon _+(y))\cdot {\bf e_3} |\ge \epsilon\,.
 \end{equation}
 We henceforth assume that $0<\epsilon\le  \frac{\delta(r_0)}{3}$.

 In summary, we have approximated the self-intersecting splash domain $\Omega_s$ with a sequence of $H^{4.5}$-class 
 domains $\Omega^ \epsilon $,  $0<\epsilon\le  \frac{\delta(r_0)}{3}$ (such that $\p \Omega ^ \epsilon $ does not self-intersect).
  As such, 
 each one of these domains $\Omega^ \epsilon $, $ \epsilon >0$,  will thus be amenable to our local-in-time well-posedness theory 
 for free-boundary incompressible Euler equations with Taylor sign condition satisfied.
 
 We also note that $\Omega^\epsilon$ and $\Omega_s$ are the same domain, except on the two patches $\theta^\epsilon_-(B^+\cap B(0,\frac1 2))$ and $\theta_+^\epsilon(B^+\cap B(0,\frac1 2))$.    In particular,  as $ \theta_ \pm $ differ from $\theta_\pm^ \epsilon $
 on a set properly contained in $\omega \subset U_0$,  we may use the same covering $\{U_l\}_{l=0}^L$ for $\Omega^ \epsilon $
 as for $\Omega_s$.

 \begin{lemma}\label{cslemma1}
  For $0<\epsilon\le  \frac{\delta(r_0)}{3}$, the $H^{4.5}$-norm of $\Omega^ \epsilon $ is bounded independently
 of $ \epsilon $.
\end{lemma}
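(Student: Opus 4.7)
The plan is to simply apply the triangle inequality in $H^{4.5}$ to the explicit formulas for the perturbed charts and unpack the domain-norm definition (Definition \ref{def1}). The charts defining $\Omega^\epsilon$ are $\theta_-^\epsilon, \theta_+^\epsilon : B^+ \to U_0$ together with $\theta_l^\epsilon = \theta_l$ for $l \in \{1,\dots,L\}$, so the $H^{4.5}$-norm of $\Omega^\epsilon$ is controlled by the sum of the $H^{4.5}(B^+)$-norms of $\theta_-^\epsilon, \theta_+^\epsilon, \theta_1,\dots,\theta_K$ and the $H^{4.5}(B)$-norms of $\theta_{K+1},\dots,\theta_L$. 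All contributions from $\theta_l^\epsilon = \theta_l$, $l \ge 1$, are independent of $\epsilon$ by construction.

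The only $\epsilon$-dependent terms are $\theta_\pm^\epsilon = \theta_\pm \pm \epsilon\,\psi\,\mathbf{e}_3$. First I would apply the triangle inequality to obtain
\begin{equation*}
\|\theta_\pm^\epsilon\|_{4.5, B^+} \le \|\theta_\pm\|_{4.5, B^+} + \epsilon\,\|\psi\,\mathbf{e}_3\|_{4.5, B^+}.
\end{equation*}
Next I would observe that $\psi \in \mathcal{D}(B(0,\mathfrak{r}))$ is a fixed smooth compactly supported bump function, chosen once and for all before letting $\epsilon \to 0$; hence $C_\psi := \|\psi\,\mathbf{e}_3\|_{4.5, B^+}$ is a finite constant independent of $\epsilon$. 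Combined with the standing assumption $0 < \epsilon \le \delta(r_0)/3$, this yields
\begin{equation*}
\|\theta_\pm^\epsilon\|_{4.5, B^+} \le \|\theta_\pm\|_{4.5, B^+} + \tfrac{\delta(r_0)}{3}\,C_\psi.
\end{equation*}

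Substituting these bounds into the definition of the $H^{4.5}$-norm of $\Omega^\epsilon$ gives a bound depending only on the fixed data $\theta_\pm$, $\theta_1,\dots,\theta_L$, $\psi$, and $\delta(r_0)$ — in particular, independent of $\epsilon$ — which is the desired conclusion. There is no substantive obstacle here: the construction of $\theta_\pm^\epsilon$ as an additive perturbation by a fixed smooth function multiplied by the small parameter $\epsilon$ was arranged precisely so that this uniform bound is immediate from the triangle inequality. The only item to be slightly careful about is keeping track of the two groups of charts (those on $B^+$ versus those on $B$) in the domain-norm definition \eqref{norm1}, but both groups are handled identically.
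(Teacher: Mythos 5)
Your proof is correct and is essentially identical to the paper's: the authors likewise dispose of the lemma with the single triangle-inequality bound $\|\theta^\epsilon_\pm\|_{4.5,B^+} \le \|\theta_\pm\|_{4.5,B^+} + \frac{\delta(r_0)}{3}\|\psi\|_{4.5,B^+}$, the unmodified charts being trivially $\epsilon$-independent. Your version merely spells out the bookkeeping with Definition \ref{def1} more explicitly.
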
 
\begin{proof} The assertion follows from the following inequality:
$$\|\theta^\epsilon _\pm\|_{4.5,B^+}
\le \|\theta _\pm\|_{4.5,B^+} +  \frac{\delta(r_0)}{3} \|\psi\|_{4.5,B^+} \,.
$$
\end{proof} 
 
\subsection{A uniform cut-off function on the unit-ball $B$}  Let $B_{1- \alpha } = B(0, 1- \alpha )$ for $0 <\alpha < 1$.
  For $ \alpha >0$ taken sufficiently small,  we have that
$\overline{\theta_-( B^+_{1- \alpha } )} \subset U_0$ and $\overline{\theta_+( B^+_{1- \alpha } )} \subset U_0$ and for
each $l=1,...,K$, $\overline{\theta_l( B^+_{1- \alpha } )} \subset U_l$, and for
each $l=K+1,...,L$, $\overline{\theta_l( B_{1- \alpha } )} \subset U_l$, and the open sets  $\theta_-( B^+_{1- \alpha } )$,
$\theta_+( B^+_{1- \alpha } )$,  $\theta_l( B^+_{1- \alpha } )$ ($1\le l\le K$),  $\theta_l( B_{1- \alpha } )$ ($K+1\le l\le L$), are also an open cover of $\Omega_s$.   
Since the  diffeomorphisms $\theta_\pm^ \epsilon$ are modifications for $\theta_\pm$ in a very small neighborhood of
the origin, it is clear that independently of $ \epsilon >0$, the sets   $\theta^\epsilon_-( B^+_{1- \alpha } )$,
$\theta^\epsilon_+( B^+_{1- \alpha } )$,  $\theta_l( B^+_{1- \alpha } )$ ($1\le l\le K$), $\theta_l( B_{1- \alpha } )$ ($K+1\le l\le L$) are also an open cover of each $ \Omega ^ \epsilon $.

\begin{definition}[Uniform cut-off function $\zeta$]\label{zeta}   Let $ \zeta \in \mathcal{D} (B(0,1))$ such that
$0 \le \zeta \le 1$ and $ \zeta(x) =1$ for $|x| <1- \alpha $ and $ \zeta =0 $ for $|x| \ge 1- {\frac{\alpha }{2}} $.

We set $\varsigma = 1- {\frac{\alpha }{2}} $, so that 
\begin{equation}\label{zeta_spt}
0\le  \zeta \in \mathcal{D} (B(0,\varsigma)) \le 1\,.
 \end{equation} 
\end{definition}

\section{Construction of the splash velocity field $u_s$ at the time of the  splash singularity}\label{sec:splashvelocity}

We can now define the so-called {\it splash velocity}  $u_s$ associated with the generalized $\bf H^{4.5}$-class splash domain 
$\Omega_s$, as well as a
sequence of approximations $u_s^ \epsilon $ set on our $H^{4.5}$-class approximations $\Omega^\epsilon$ of the splash domain
$\Omega_s$.

\subsection{The splash velocity $u_s$}
\begin{definition}[Splash velocity $u_s$]  \label{def::us}
A velocity field $u_s$ on an $\bf H^{4.5}$-class splash domain $ \Omega_s$ is called a {\it splash velocity} if it satisfies the following
properties:
\begin{enumerate}
\item $ \zeta  u_s \circ \theta _\pm \in  H^{4.5}(B^+)$, $\zeta  u_s\circ\theta_l\in H^{4.5}(B^+)$ for 
each  $1\le l\le K$ and $u_s\in H^{4.5}(\omega)$ for each $\overline\omega\subset\Omega_s$;
\item  so that under the motion of the fluid, the 
sets $U_0^+$ and $U_0^-$  relatively move towards each other, we require that
 \begin{equation}
\label{touch}
u_s^3 \circ \theta_- >C_-\ ,  \ -u_s^3 \circ \theta_+ >C_+\  \ \text{in}\ B^+ \text { and } C_- + C_+ >0\,,
\end{equation}
where $C_-$ and $C_+$ are  constants.
\end{enumerate}
\end{definition}

\begin{definition}[Splash pressure $p_s$]  \label{def::ps}
A pressure function $p_s$ on an $\bf H^{4.5}$-class splash domain $ \Omega_s$ is called a {\it splash pressure}
associated to the splash velocity $u_s$ if it satisfies the following
properties:
\begin{enumerate}
\item  $p_s$ is the unique solution of
\begin{subequations}
  \label{pressure}
\begin{alignat}{2}
- \Delta   p_s &=-\frac{ \p u_s^i}{\p x_j} \frac{\p u_s^j}{\p x_i}\ \ \   &&\text{in} \ \Omega_s \,, \label{pressure.a}\\
\zeta  p_s\circ \theta_\pm &=0  &&\text{on} \  B^0\,, \label{pressure.b} \\
 \zeta p_s\circ \theta_l &=0  &&\text{on} \  B^0  \  \text{ for } \ l=1,...,K\,; \label{pressure.c}
\end{alignat}
\end{subequations} 
\item the {\it splash pressure} $p_s \in H^{4.5}(\Omega_s)$ and
satisfies the local version of the Rayleigh-Taylor sign condition:
\begin{equation} 
\label{sign}
\frac{\p}{\p x_3} (\zeta  p_s\circ\theta_\pm) > C_{RT}>0 \text{ and }
\frac{\p}{\p x_3} \left( \zeta  p_s \circ \theta_l\right) > C_{RT}> 0 \ \text{ on } \  B^0 \  \text{ for } \ l=1,...,K\,.
\end{equation} 
Note that the outward unit normal to $\p B^+\cap B^0$ points in the direction of $-{\bf e_3}$.
\end{enumerate}
\end{definition} 
\begin{remark} 
As $x_0=\theta_-(0)=\theta_+(0)$, and as $p\circ \theta_-(0) = p\circ \theta_+(0)=0$, 
the conditions (\ref{pressure.b}) and  (\ref{pressure.c}) are equivalent to having the usual
vanishing trace $p=0$ on $\Gamma_s$.  As such, 
 $p\in H^1_0(\Omega_s)\cap H^{4.5}(\Omega_s)$.
\end{remark}

For  property (1) in Definition \ref{def::ps}, we note that $p_s$ is the unique $H_0^1(\Omega_s)$  weak solution of
(\ref{pressure}) guaranteed by the Lax-Milgram theorem in $\Omega_s$.    The usual methods of elliptic regularity theory show that
$\zeta  p_s \circ \theta_\pm$ and
 each $\zeta  p_s\circ\theta_l\in H^{4.5}(B^+)$ for $l=1,..,L$, and thus that $p_s\in H^{4.5}(\Omega_s)$.    (Notice that it is the regularity
 of our charts $\theta_\pm$ and $\theta_l$ which limits the regularity of the splash pressure $p_s$.)
 

As we have defined in property (2) of Definition \ref{def::ps}, 
at the point of self-intersection $x_0$, the gradient $\nabla p_s$ has to be defined from each side of the tangent plane at 
$x_0$;  namely, we can define $\nabla p_s\circ\theta_-$ and $\nabla p_s\circ\theta_+$ on $B^0$, and these two vectors are not equal at the origin  $0$ which is the pre-image of $x_0$ under both $\theta_-$ and $\theta_+$.

It is always possible to choose a splash velocity $u_s$ so that (\ref{sign}) holds.   For example, if we choose $u_s$ to
satisfy $ \operatorname{curl} u_s =0$, then (\ref{sign}) holds according to the maximum principle \cite{Wu1997,Wu1999}.
On the other hand, it is not necessary to choose an irrotational splash velocity, and we will not impose such a constraint.
Essentially, as long as the velocity field induces a positive pressure function, then (\ref{sign}) is satisfied.

\subsection{A sequence of approximations $u_s^ \epsilon $ to the splash velocity}
For $ \epsilon >0$, 
we proceed to construct a sequence of approximations $u_s^ \epsilon : \Omega ^ \epsilon \to \mathbb{R}^3  $ to the velocity field $u_s:\Omega_s \to \mathbb{R}^3  $ in the following way:
\begin{subequations}
\label{usepsilon}
\begin{align}
u_s^\epsilon\circ\theta_l&=u_s\circ\theta_l\,,\ \text{in}\ \ B^+\,,\ \text{for}\ l=1,...,K\,; \\
u_s^\epsilon\circ\theta_l&=u_s\circ\theta_l\,,\ \text{in}\ \ B\,,\ \text{for}\ l=K+1,...,L\,; \\
u_s^\epsilon\circ\theta^\epsilon_-&=u_s\circ\theta_-\,,\ \text{and}\,,\ u_s^\epsilon\circ\theta^\epsilon_+=u_s\circ\theta_+\,,\ \text{in}\ B^+\,.
\end{align}
\end{subequations}
We then have the existence of constants $A>0$, $B>0$ such that
\begin{align}
\|u_s^ \epsilon \|_{4.5,\Omega^\epsilon}&\le A\left(  \| \zeta  u_s^ \epsilon \circ\theta_-^ \epsilon \|_{4,B^+}  +  \| \zeta  u_s^ \epsilon \circ\theta_+^ \epsilon \|_{4,B^+} \right. \nonumber \\
& \qquad \left.
+
\sum_{l=1}^K \| \zeta  u_s^ \epsilon \circ\theta^\epsilon_l\|_{4.5,B^+} +
\sum_{l=K+1}^L \| \zeta  u_s^ \epsilon \circ\theta^\epsilon_l\|_{4.5,B}  \right) \le B \|u_s\|_{4.5,\Omega_s}\,.
\label{est_useps}
\end{align}
We next define the approximate pressure function $p^ \epsilon _s$ in $\Omega^\epsilon$ as the  $H_0^1(\Omega^\epsilon)$ 
weak solution of
\begin{subequations}
  \label{pepsilon}
\begin{alignat}{2}
- \Delta  p^\epsilon_s&= \frac{\p {u^\epsilon_s}^i}{\p x_j} \frac{\p {u^\epsilon_s}^j}{\p x_i}\ \ \   &&\text{in} \ \Omega^\epsilon \,, \label{pepsilon.a}\\
p^\epsilon_s&=0  &&\text{on} \ \partial\Omega^\epsilon \,. \label{pepsilon.b}
\end{alignat}
\end{subequations}
Again, standard elliptic regularity theory  then shows that 
$p^\epsilon_s\in H^{4.5}(\Omega^\epsilon)$. Furthermore, since  $\theta_\pm^\epsilon\rightarrow \theta_\pm$ and
 $\theta_l^\epsilon\rightarrow \theta_l$ in $H^{4.5}(B^+)$, we infer from the definition of $u_s^\epsilon$ in (\ref{usepsilon}) that 
 $ \zeta  p^ \epsilon _s\circ\theta_\pm^\epsilon \rightarrow \zeta  p\circ\theta_\pm$ and
$ \zeta p^ \epsilon _s\circ\theta_l^\epsilon \rightarrow \zeta  p\circ\theta_l$ in $H^{4.5}(B^+)$.  
We may thus conclude
from the pressure condition (\ref{sign})  that we also have, uniformly in $\epsilon>0$ small enough, that
\begin{equation}
\label{signepsilon}
\frac{\p}{\p x_3}( \zeta  p^\epsilon_s\circ\theta^\epsilon_\pm)>\frac{C_{RT}}{2}>0 \text{ and }
\frac{\p}{\p x_3}( \zeta  p^\epsilon_s\circ\theta^\epsilon_l)>\frac{C_{RT}}{2}>0\,,\ \text{on}\ \ B^0 \ \text{ for each } \ 1\le l\le K \,.
\end{equation}

\subsection{Solving the Euler equations backwards-in-time from the final states $\Omega^\epsilon $ and  $u_s^ \epsilon $}  
Because the Euler equations are time-reversible, 
we  can solve the following system of free-boundary Euler equations backward-in-time:
\begin{subequations}
\label{lageuler2}
\begin{alignat}{2}
\eta^ \epsilon (t) &= e + \int_0^t v^ \epsilon \ \ && \text{ in } \Omega ^ \epsilon  \times [-T^ \epsilon ,0] \,, \label{lageuler2.a0} \\
 v^ \epsilon _t + [A^ \epsilon]^T Dq^ \epsilon  &=0 \ \ && \text{ in } \Omega^ \epsilon  \times [-T^ \epsilon ,0) \,, \label{lageuler2.a} \\
\operatorname{div} _{\eta ^ \epsilon } v^ \epsilon  &=0 \ \ && \text{ in } \Omega^ \epsilon  \times [-T^ \epsilon ,0]\,,\label{lageuler2.b}  \\
q^ \epsilon   &=0 \ \ && \text{ on } \Gamma^ \epsilon  \times [-T^ \epsilon ,0] \,,\label{lageuler2.c}  \\
(\eta^ \epsilon ,v^ \epsilon )  &=(e,u_s^ \epsilon ) \ \  \ \ && \text{ in } \Omega^ \epsilon  \times \{t=0\} \,, \label{lageuler2.e}
\end{alignat}
\end{subequations}
where $ A^ \epsilon (x,t) = [ D \eta^ \epsilon (x,t)] ^{-1} $.
Thanks to Lemma \ref{approx_domain},  (\ref{est_useps}), and  (\ref{signepsilon}),  we may apply 
our local well-posedness Theorem \ref{thm_local} for (\ref{lageuler2})  backward-in-time.  This then gives us the existence of 
$T^\epsilon>0$, such that there exists a Lagrangian velocity field 
\begin{equation}\label{veps}
v^\epsilon\in L^\infty (-T^\epsilon,0;H^4(\Omega^\epsilon)),
\end{equation} 
 and a Lagrangian flow map 
 \begin{equation}\label{etaeps}
 \eta^\epsilon \in L^\infty (-T^\epsilon,0;H^{4.5}(\Omega^\epsilon)))
 \end{equation} 
 which solve the free-boundary Euler equations (\ref{lageuler2}) with {\it final data}  $u_s^\epsilon$ and {\it final domain} $\Omega^\epsilon$.
  
Denoting the corresponding Eulerian velocity field by
\begin{equation}\label{ueps}
u^ \epsilon = v^ \epsilon \circ {\eta^ \epsilon }^{-1}  \,,
\end{equation} 
it follows that
$u^\epsilon$ is in $ L^\infty ((-T^\epsilon,0);H^4(\Omega^\epsilon(t)))$, where $ \Omega^\epsilon(t) $ denotes the image of $\Omega^ \epsilon $ under the flow map $\eta^ \epsilon (t)$.

In the remainder of the paper we will prove that  the time of existence $T^\epsilon>0$ (for our sequence of backwards-in-time
Euler equations)  is, in fact,  independent of $\epsilon$; that is, $T^\epsilon $ is equal to a time $T>0$, and that 
$\|u^\epsilon(t)\|_{H^4(\Omega^ \epsilon (t))}$ and $\|\eta^\epsilon(t)\|_{H^{4.5}(\Omega^ \epsilon )}$ are bounded on $[-T,0]$ independently of $\epsilon$. This will then provide us with the existence of a solution which culminates in the
{\it splash singularity} $\Omega_s$ at $t=0$, from the initial data 
\begin{align*} 
u_0 & =\lim_{\epsilon\rightarrow 0} u^\epsilon (-T) \,, \\
\Omega_0 & =\lim_{\epsilon\rightarrow 0} \Omega^\epsilon(-T) \,.
\end{align*} 
In particular, when solving the Euler equations forward-in-time from the initial states $\Omega_0$ and $u_0$,
the smooth $H^{4.5}$ domain $\Omega_0$ is dynamically mapped onto the $\bf H^{4.5}$-class splash domain $\Omega_s$
after a time $T$, and the boundary ``splashes onto itself'' creating the self-intersecting 
 {\it splash singularity} at the point $x_0$.

\section{The  main results}\label{sec:maintheorem}

\begin{theorem}[Finite-time splash singularity]\label{theorem_main}  There exist initial domains $\Omega_0$ of class
$H^{4.5}$ and initial velocity fields 
$u_0 \in H^4(\Omega_0)$, which  satisfy the Taylor sign condition
(\ref{taylor}), such that after a finite time $T>0$, the
solution to the Euler equation $\eta(t)$ (with such data) maps $\Omega_0$ onto the 
splash domain $\Omega_s$, satisfying Definition \ref{def:splashdomain}, with final velocity $u_s$.  
This final velocity $u_s$ satisfies the local Taylor sign condition on the
splash domain $\Omega_s$  in the sense of (\ref{sign}).   The splash velocity $u_s$
has a specified relative velocity on the boundary of the splash domain given by (\ref{touch}).
\end{theorem}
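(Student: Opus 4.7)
The plan is to construct the initial data $(\Omega_0, u_0)$ by running the time-reversed Euler equations from the prescribed splash state $(\Omega_s, u_s)$ along the smooth approximating sequence $(\Omega^\epsilon, u_s^\epsilon)$ built in Section~\ref{sec:splashvelocity}, and then passing to the limit $\epsilon \to 0$. The groundwork is already in place: by Lemma~\ref{cslemma1} and estimate~(\ref{est_useps}), the approximating domains have $H^{4.5}$-norm and their associated velocities $u_s^\epsilon$ have $H^{4.5}(\Omega^\epsilon)$-norm bounded uniformly in $\epsilon$, while (\ref{signepsilon}) gives a uniform Rayleigh--Taylor sign condition $\p_{x_3}(\zeta p_s^\epsilon \circ \theta^\epsilon) > C_{RT}/2$. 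Applying Theorem~\ref{thm_local} to the time-reversed Lagrangian system~(\ref{lageuler2}) with final data $(e, u_s^\epsilon)$ produces a smooth backward solution $(\eta^\epsilon, v^\epsilon)$ on some interval $[-T^\epsilon, 0]$.

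The heart of the argument is upgrading the $\epsilon$-dependent existence time $T^\epsilon$ to a uniform time $T$, and obtaining uniform-in-$\epsilon$ bounds on $(\eta^\epsilon, v^\epsilon)$. I would run the a priori estimate of Theorem~\ref{thm_apriori} on $(\eta^\epsilon, v^\epsilon)$ and carefully audit every constant appearing in the resulting polynomial/Gronwall-type energy inequality, verifying that each depends only on quantities controlled uniformly in $\epsilon$: namely, the $H^{4.5}$-norms of the charts $\theta_\pm^\epsilon, \theta_l^\epsilon$ (Lemma~\ref{cslemma1}), the norm $\|u_s^\epsilon\|_{4.5,\Omega^\epsilon}$ (\ref{est_useps}), and the Rayleigh--Taylor lower bound (\ref{signepsilon}). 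Crucially, the energy $E^\epsilon(t)$ is assembled chart-by-chart against the cut-off $\zeta$ of Definition~\ref{zeta}, so the degenerating global geometry of $\Omega^\epsilon$ is never seen directly; only the local chart norms enter. Closing Gronwall then yields a uniform time $T > 0$ and a uniform bound on $\sup_{t \in [-T,0]} E^\epsilon(t)$.

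With uniform bounds in hand, standard compactness (weak-$*$ compactness of $\eta^\epsilon$ in $L^\infty_t H^{4.5}$ and of $v^\epsilon$ in $L^\infty_t H^{4}$, together with Aubin--Lions strong compactness at lower regularity to pass to the limit in the nonlinear terms $[D\eta^\epsilon]^{-T}Dq^\epsilon$ and $\mathrm{div}_{\eta^\epsilon}v^\epsilon$) produces a limit $(\eta, v)$ solving the backward Euler system~(\ref{lageuler2}) on $[-T, 0]$ with final state the splash configuration $(\Omega_s, u_s)$. Define $\Omega_0 := \eta(-T)(\Omega_s)$ and $u_0 := v(-T) \circ \eta(-T)^{-1}$. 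The separation condition~(\ref{touch}) guarantees that for all $t \in [-T, 0)$ the two splash sheets $\theta_\pm^\epsilon(B^0)$ stay apart under the backward flow, so $\eta(-T)$ is an $H^{4.5}$ diffeomorphism onto a non-self-intersecting domain of class $H^{4.5}$. By time-reversibility of (\ref{euler}), reading the solution forward from $(\Omega_0, u_0)$ produces the desired splash at time $T$; the Taylor sign condition~(\ref{taylor}) on $\Omega_0$ is inherited from~(\ref{signepsilon}) propagated backward through the Lagrangian flow (which is smooth and well-controlled away from $t=0$).

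The main obstacle is the uniformity of the a priori estimate as $\epsilon \to 0$, since the approximate boundaries are pinching together at $x_0$. The key technical point is that the pressure regularity step---in which $q^\epsilon$ must be controlled in $H^{4.5}$ from the elliptic problem it satisfies on the degenerating domain $\Omega^\epsilon$---must be performed entirely in local coordinates so that the elliptic constants depend only on the chart norms (uniformly bounded by Lemma~\ref{cslemma1}) and not on the global geometry of $\Omega^\epsilon$. Once the pressure estimate is localized via the cut-off $\zeta$, the argument reduces to standard $H^{4.5}$ elliptic estimates on $B^+$ with $\epsilon$-independent constants, and the remaining energy-type inequalities close without further obstruction.
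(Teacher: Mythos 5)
Your proposal is correct and follows essentially the same route as the paper: approximate the splash configuration by non-self-intersecting $H^{4.5}$ domains, solve the time-reversed Euler system from the final data $(\Omega^\epsilon, u_s^\epsilon)$, obtain an $\epsilon$-independent existence time and energy bound by working chart-by-chart with the cut-off $\zeta$ (including the localized elliptic pressure estimate), use the relative-velocity condition (\ref{touch}) together with the vertical separation (\ref{verticallocal}) to keep the flow injective on $[-T,0]$, and pass to the limit to recover the forward solution ending in the splash. The only differences are cosmetic technical choices (e.g., Aubin--Lions versus the paper's Arzel\`a--Ascoli diagonal argument for pointwise-in-time weak convergence, and defining $\Omega_0$ via the limit flow map rather than as the union of the limit charts $\Theta_l(\mathcal{B}_\varsigma)$), which do not change the substance of the argument.
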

The proof of Theorem \ref{sec:maintheorem} is given in Sections \ref{Charts}--\ref{sec8}.   In Sections \ref{sec9}--\ref{sec10}
we define the splat domain $\bf \Omega _s$ and associated splat velocity $\bf u_s$ and establish the following
\begin{theorem}[Finite-time splat singularity]\label{theorem2}  There exist initial domains $\Omega_0$ of class
$H^{4.5}$ and initial velocity fields 
$u_0 \in H^4(\Omega_0)$, which  satisfy the Taylor sign condition
(\ref{taylor}), such that after a finite time $T>0$, the
solution to the Euler equation $\eta(t)$ (with such data) maps $\Omega_0$ onto the 
splat domain $\bf\Omega_s$, satisfying Definition \ref{def:splatdomain}, with final  velocity $\bf u_s$.  
This final splat velocity $\bf u_s$ satisfies the local Taylor sign condition on the
splat domain $\bf \Omega_s$  in the sense of (\ref{sign}).   The splat velocity $\bf u_s$
has a specified relative velocity on the boundary of the splat domain as stated in Definition \ref{def::usplat}.
\end{theorem}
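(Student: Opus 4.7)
The plan is to mirror the structure of the proof of Theorem \ref{theorem_main}, replacing the single self-intersection point $x_0$ by a smooth two-dimensional \emph{splat surface} $\Gamma_0$ along which two sheets of the boundary coincide. First I would give a meta-definition and a formal Definition \ref{def:splatdomain} of the splat domain $\bf \Omega_s$ as follows: $\bf \Omega_s$ is locally on each side of its tangent plane along an $H^{4.5}$ surface $\Gamma_0 \subset \partial \bf \Omega_s$, and locally on one side of $\partial \bf \Omega_s$ everywhere else. A neighborhood of $\Gamma_0$ is covered by finitely many open sets $\{U_0^m\}_{m=1}^M$; on each $U_0^m$ one has two $H^{4.5}$ charts $\theta_\pm^m : B \to U_0^m$ with $\theta_+^m(B^0) = \theta_-^m(B^0)$ parametrizing the corresponding portion of $\Gamma_0$, and with $\theta_+^m(B^+)$, $\theta_-^m(B^+)$ lying on opposite sides of the tangent plane at each point of $\Gamma_0$. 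Away from $\Gamma_0$, the domain is covered by one-sided charts $\theta_l$ exactly as in Section \ref{sec::charts}.

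Next I would construct the non-self-intersecting $H^{4.5}$ approximations $\bf \Omega^\epsilon$. After straightening coordinates locally so that the tangent plane of $\Gamma_0$ coincides with $\{x_3 = \text{const}\}$, I would pick a nonnegative $\psi \in \mathcal{D}(B(0,\mathfrak r))$ with $\psi \equiv 1$ near $0$, and set
\begin{equation*}
\theta_\pm^{m,\epsilon}(x) = \theta_\pm^m(x) \pm \epsilon\, \psi(x)\, \mathbf{e}_3,
\end{equation*}
with $\theta_l^\epsilon = \theta_l$ on the remaining charts. The same argument as in Section \ref{sec:splashdomain} yields the vertical separation estimate analogous to (\ref{verticallocal}), $|(\theta_-^{m,\epsilon}(x) - \theta_+^{m,\epsilon}(y))\cdot \mathbf{e}_3| \ge \epsilon$, uniformly in $m$, together with Lemmas \ref{approx_domain} and \ref{cslemma1} in splat form. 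The splat velocity $\bf u_s$ and splat pressure $\bf p_s$ are defined as in Definitions \ref{def::us} and \ref{def::ps}: on each chart pair the normal components must satisfy $u_s^3 \circ \theta_-^m > C_-^m$ and $-u_s^3\circ \theta_+^m > C_+^m$ with $C_-^m + C_+^m > 0$, while $\bf p_s \in H^1_0 \cap H^{4.5}(\bf \Omega_s)$ solves the Laplace equation (\ref{pressure.a}) and satisfies the local sign condition (\ref{sign}) on every chart intersecting $\partial \bf \Omega_s$. The approximate pair $(u_s^\epsilon, p_s^\epsilon)$ on $\bf \Omega^\epsilon$ is built exactly as in (\ref{usepsilon})--(\ref{pepsilon}), and the $H^{4.5}$-convergence $\theta_\pm^{m,\epsilon} \to \theta_\pm^m$ combined with elliptic regularity transfers the sign condition to the approximations uniformly in $\epsilon$, in the form of (\ref{signepsilon}).

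With these ingredients in place, I would solve the backwards-in-time Lagrangian Euler system (\ref{lageuler2}) with final data $(\bf \Omega^\epsilon, u_s^\epsilon)$ by Theorem \ref{thm_local}, obtaining $v^\epsilon \in L^\infty(-T^\epsilon,0; H^4(\bf\Omega^\epsilon))$ and $\eta^\epsilon \in L^\infty(-T^\epsilon,0; H^{4.5}(\bf \Omega^\epsilon))$. Then I would invoke the $\epsilon$-independent a priori estimate of Theorem \ref{thm_apriori} (Appendix A) to upgrade this to a common time of existence $T > 0$ with uniform bounds, and pass to the limit $\epsilon \to 0$ to extract $u_0 = \lim u^\epsilon(-T)$ and $\Omega_0 = \lim \bf \Omega^\epsilon(-T)$. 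Running the Euler evolution forward from $(\Omega_0, u_0)$ then produces the splat at time $T$.

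The principal obstacle, in contrast with the splash case, is that the perturbation $\epsilon\, \psi(x)\, \mathbf{e}_3$ is no longer localized near a single point but is supported in a tubular neighborhood of the entire surface $\Gamma_0$, so one must (i) choose the cover $\{U_0^m\}$ and the patched cutoff $\psi$ finely enough that the modified charts $\theta_\pm^{m,\epsilon}$ remain pairwise compatible and glue into a bona fide $H^{4.5}$-atlas for $\bf \Omega^\epsilon$, and (ii) check that the constants in the Rayleigh--Taylor condition (\ref{signepsilon}) and in the relative normal velocity (\ref{touch}) can be taken independent of $m$ and $\epsilon$. Both points are handled by a compactness argument along $\Gamma_0$, using the fact that the straightening coordinates and the cutoff $\psi$ vary smoothly along $\Gamma_0$, so that the a priori estimate Theorem \ref{thm_apriori}, which is inherently local in the charts, applies with constants depending only on the finitely many patches and on $\|\bf u_s\|_{4.5, \bf \Omega_s}$, not on $\epsilon$.
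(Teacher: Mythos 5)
Your overall scheme (define the splat domain, build non-self-intersecting approximations $\bf\Omega^\epsilon$, solve the Euler equations backwards in time from $({\bf\Omega^\epsilon},{\bf u_s^\epsilon})$, obtain $\epsilon$-independent bounds, and pass to the limit) is exactly the paper's, and Sections \ref{sec9}--\ref{sec10} indeed reduce everything after the geometric setup to the arguments of Sections \ref{sec7}--\ref{sec8}. The genuine difference --- and the gap --- is in how the self-intersection surface $\Gamma_0$ is handled. You cover $\Gamma_0$ by finitely many patches $U_0^m$, straighten coordinates in each, and push the two sheets apart by $\pm\epsilon\,\psi(x)\,{\bf e_3}$ in each patch's local vertical direction. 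Because $\Gamma_0$ is curved, these directions differ from patch to patch, so (i) the perturbed charts $\theta_\pm^{m,\epsilon}$ need not agree on patch overlaps and do not obviously glue into a single $H^{4.5}$ domain, and (ii) the separation estimate $|(\theta_-^{m,\epsilon}(x)-\theta_+^{m',\epsilon}(y))\cdot{\bf e_3}|\ge\epsilon$ has no meaning for $m\ne m'$, so the analogue of (\ref{verticallocal}), and hence of (\ref{cs2}) and (\ref{cs6}), is not established. You flag this yourself as ``the principal obstacle'' and invoke a compactness argument, but compactness alone does not reconcile the differing perturbation directions; what is needed is a single, globally defined transversal direction along $\Gamma_0$.

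That is precisely the device the paper introduces: a level-set function $\phi\in H^{4.5}(\R^3)$ with $\Gamma_0\subset\{\phi=0\}$ and $\|\nabla\phi\|\ge C_0>0$ on $\overline{U_0}$ (items (1) and (3) of Definition \ref{def:splatdomain}), a \emph{single} pair of charts $\theta_\pm\colon B\to U_0$ that coincide on a set $\overline{\omega_0}\subset B_0$, and the perturbation $\theta_\pm^\epsilon=\theta_\pm\pm\epsilon\,\psi(\theta_\pm)\,\nabla\phi(\theta_\pm)$. Disjointness of the two perturbed sheets is then read off from the sign of $\phi\circ\theta_\pm^\epsilon$; the relative-velocity hypothesis is stated as ${\bf u_s}\cdot\nabla\phi\circ\theta_->C_-$ and $-{\bf u_s}\cdot\nabla\phi\circ\theta_+>C_+$ (Definition \ref{def::usplat}) rather than in terms of a third velocity component; and the dynamic separation estimate is obtained by tracking $\phi(\eta^\epsilon(x,t))-\phi(\eta^\epsilon(y,t))$ as in (\ref{cs1001}) and converting this to a Euclidean lower bound (\ref{csplat2}) via the bounds on $\nabla\phi$. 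If you replace your per-patch straightening by such a global level-set function (equivalently, a globally defined transversal vector field along $\Gamma_0$), your argument closes; without it, the construction of $\bf\Omega^\epsilon$ and the injectivity and separation estimates remain incomplete.
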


\section{Euler equations set on a finite number of local charts}\label{Charts}
For each $ \epsilon >0$,
the functions $v^ \epsilon $, $\eta^ \epsilon $, and $ u^ \epsilon $, given by (\ref{veps})--(\ref{ueps}), are solutions to
the Euler equations (\ref{lageuler2}) on the time interval $[-T^ \epsilon ,0]$.

For the purpose of obtaining estimates for this sequence of solutions which do not depend on $ \epsilon >0$,  we
pull-back  the Euler equations  (\ref{lageuler2}) set on $\Omega^\epsilon$  by our charts $\theta_\pm^\epsilon$ and $\theta_l^\epsilon$, $l=1,...,L$; in this way we can analyze the 
 equations on the  half-ball $B^+$.  
 
  It is convenient to extend the index $l$ to include both $l=0$ and $l=-1$;  in
particular, we set
 \begin{alignat*}{2}
\theta_{-1}^\epsilon & = \theta_- ^\epsilon  \ \ \text{ and } \ \   &&\theta_{-1} = \theta_- \,,\\
\theta_0^\epsilon & = \theta_+^\epsilon  &&\ \  \theta_0 = \theta_+\,.
\end{alignat*}

Furthermore,  since for $l=-1,0,1,2,...,K$,  the domain of $\theta_l$ is the half-ball $B^+$, and for $l=K+1,...,L$, the domain
of $\theta_l$ is the unit-ball $B$, it is convenient to write
\begin{alignat*}{2}
\theta_-: \mathcal{B}  \to U_0^-\,, \  \theta_+: \mathcal{B} \to U_0^+\,, \
  \theta_l : \mathcal{B} & \to U_l\cap \Omega \ && \text{ for } \  l=1,...,K \,,    \\
     \theta_l : \mathcal{B}&  \to U_l  \  && \text{ for } \  l=K+1,...,L \,,
\end{alignat*} 
so that $ \mathcal{B} $ denotes $B^+$ for $l=-1,0,1,2,...,K$ and $ \mathcal{B} $ denotes $B$ for $l=K+1,...,L$.

The Euler equations, set on $ \mathcal{B} $, then take the following form:
\begin{subequations}
  \label{leuler}
\begin{alignat}{2}
\eta^ \epsilon &=e + \int_0^t v^ \epsilon \ \ \   &&\text{in} \ \Omega^\epsilon \times [-T^\epsilon,0)\,, \label{leuler.a}\\
\partial_t v^ \epsilon \circ\theta_l^\epsilon+ [b_l^\epsilon]^T \,D(q^ \epsilon \circ\theta_l^\epsilon)&=0  &&\text{in} \ \mathcal{B}  \times [-T^\epsilon,0)\,, \label{leuler.b}\\
  \operatorname{div}_{\eta^ \epsilon  \circ \theta_l^\epsilon} v^ \epsilon \circ\theta_l ^\epsilon &= 0     &&\text{in} \   \mathcal{B}  \times [-T^\epsilon,0)
\,, \label{leuler.c}\\
q^ \epsilon \circ\theta_l^\epsilon &= 0 \ \ &&\text{on} \  B_0 \times [-T^\epsilon,0) \,, \label{leuler.d}\\
   (\eta^ \epsilon   \circ \theta_l^\epsilon ,v^ \epsilon \circ\theta_l^\epsilon)  &= (\theta_l^\epsilon, u_s^ \epsilon \circ\theta_l^\epsilon)  \ \ \ && \text{on} \ \mathcal{B} \times\{t=0\} 
                                                 \,, \label{leuler.e} 
\end{alignat}
\end{subequations}
where $[b_l^\epsilon]^T $ denotes the transpose of the matrix $b_l^\epsilon$, and where
for any $l=-1,0,1,2,...L$, 
$b_l ^\epsilon(x,t)  = \left[D (\eta^ \epsilon  (\theta_l^\epsilon(x),t)\right] ^{-1}$.   
For  $l=K+1,...,L$,   the boundary condition (\ref{leuler.d}) is not imposed.

The system (\ref{leuler}) will allow us to analyze  the behavior of $\eta^ \epsilon $, $v^ \epsilon $, and $q^ \epsilon $ in an 
$ \epsilon $-independent fashion.   Fundamental to this analysis is the following
\begin{lemma}[Equivalence-of-norms lemma]
\label{norms}
With the smooth cut-off function $ \zeta $ given in Definition \ref{zeta},
there exist constants $\tilde C_1>0$ and $\tilde C_2>0$ such that for any $\epsilon>0$ and $f\in H^s(\Omega)$ with $0\le s\le 4.5$, 
\begin{equation}
\tilde C_1\sum_{l=-1}^L\| \zeta   f \circ \theta_l^\epsilon\|^2_{s,\mathcal{B} } \le \|f\|^2_{s,\Omega^\epsilon}\le \tilde C_2\sum_{l=-1}^L\| \zeta   f \circ \theta_l^\epsilon\|^2_{s,\mathcal{B} } \,.
\end{equation}
\end{lemma}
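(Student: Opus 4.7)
The plan is to reduce to integer values of $s$ and then interpolate, making careful use of the uniform control on the coordinate charts $\theta_l^\epsilon$ that is guaranteed by Lemma \ref{cslemma1}.

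First I would establish the uniform chart bounds that make the whole argument $\epsilon$-independent. Since $\theta_\pm^\epsilon = \theta_\pm \pm \epsilon\,\psi\,\bf e_3$ with $\psi$ smooth and fixed, and since $\theta_l^\epsilon = \theta_l$ for $l \ge 1$, Lemma \ref{cslemma1} gives a bound on $\|\theta_l^\epsilon\|_{4.5,\mathcal{B}}$ independent of $\epsilon$. Moreover, $\det D\theta_\pm^\epsilon = \det D\theta_\pm + O(\epsilon)$, so for $\epsilon$ small enough the Jacobians stay uniformly bounded below, and hence the inverse charts $(\theta_l^\epsilon)^{-1}$ exist with $H^{4.5}$-bounds that are also uniform in $\epsilon$.

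For the upper bound, for integer $s$ I would apply the chain rule to $\zeta\,f\circ\theta_l^\epsilon$, whose $s$-derivatives expand into products of up to $s$ derivatives of $\theta_l^\epsilon$ with derivatives of $f\circ\theta_l^\epsilon$ and of $\zeta$. Standard Moser-type multiplicative estimates combined with the change of variables $y = \theta_l^\epsilon(x)$ then give
\begin{equation*}
\|\zeta\,f\circ\theta_l^\epsilon\|_{s,\mathcal{B}}^2 \le C\bigl(\|\theta_l^\epsilon\|_{4.5,\mathcal{B}},\|\zeta\|_{4.5,\mathcal{B}}\bigr)\,\|f\|_{s,\theta_l^\epsilon(\spt\zeta)}^2 \,.
\end{equation*}
Since each point of $\Omega^\epsilon$ lies in only finitely many sets $\theta_l^\epsilon(\spt\zeta)$ (a number depending only on the covering, not on $\epsilon$), summing over $l$ yields the inequality on the left of the lemma.

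For the lower bound, the main point is that $\zeta \equiv 1$ on $B_{1-\alpha}$ and, as noted right above Definition \ref{zeta}, the images $\theta_l^\epsilon(\mathcal{B}_{1-\alpha})$ still form an open cover of $\Omega^\epsilon$ uniformly in $\epsilon$. I would build a smooth partition of unity $\{\varphi_l^\epsilon\}$ subordinate to this cover by pulling back a fixed partition $\{\varphi_l\}$ subordinate to $\{\theta_l(\mathcal{B}_{1-\alpha})\}$ on $\Omega_s$ via the composition $\varphi_l^\epsilon = \varphi_l \circ \theta_l \circ (\theta_l^\epsilon)^{-1}$; the uniform chart bounds above ensure that the $H^{4.5}$-norms of the $\varphi_l^\epsilon$ are bounded independently of $\epsilon$. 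Writing $f = \sum_l \varphi_l^\epsilon f$ on $\Omega^\epsilon$, applying the triangle inequality, changing variables back to $\mathcal{B}$, and using that $\varphi_l^\epsilon f = \varphi_l^\epsilon\,\zeta f\circ\theta_l^\epsilon\circ(\theta_l^\epsilon)^{-1}$ on $\theta_l^\epsilon(\mathcal{B}_{1-\alpha})$, I would obtain the right inequality for integer $s$.

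The main obstacle I expect is not any single estimate but the bookkeeping required to make every constant uniform in $\epsilon$ simultaneously. This comes down to the observation that the perturbations defining $\theta_\pm^\epsilon$ are smooth and of size $O(\epsilon)$ in every derivative, so all quantities that appear in Moser estimates (chart norms, inverse chart norms, partition-of-unity norms, finite covering multiplicities) can be bounded by constants depending on the original splash domain $\Omega_s$ alone. Once the statement is established for integer values $s \in \{0,1,2,3,4\}$ with uniform constants, the general case $0 \le s \le 4.5$ follows by the standard interpolation theorem for Hilbert-scale Sobolev spaces.
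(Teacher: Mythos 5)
Your proposal is correct and follows essentially the same strategy as the paper: the direction $\sum_l\|\zeta\, f\circ\theta_l^\epsilon\|^2_{s,\mathcal{B}}\le C\|f\|^2_{s,\Omega^\epsilon}$ comes from the $\epsilon$-uniform bound $\|\theta_l^\epsilon\|_{4.5,\mathcal{B}}\le C_l$ (which the paper dismisses as ``obvious''), and the reverse direction comes from the fact that the images of the set where $\zeta\equiv 1$ cover $\Omega^\epsilon$, combined with the uniform lower bound $\det D\theta_l^\epsilon\ge c_l>0$ so that the inverse charts are uniformly controlled. The one place where you diverge is the second inequality: the paper simply writes $\|f\|_{s,\Omega^\epsilon}\le C\sum_l\|f\|_{s,\theta_l^\epsilon(E)}$ with $E=\{\zeta=1\}$, inserts $\zeta\circ(\theta_l^\epsilon)^{-1}$ for free since it equals $1$ there, and pulls back --- no partition of unity is needed. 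Your partition-of-unity construction would also work, but as written the functions $\varphi_l^\epsilon=\varphi_l\circ\theta_l\circ(\theta_l^\epsilon)^{-1}$ do not sum to $1$ on $\Omega^\epsilon$ (near the splash point the compositions $\theta_l\circ(\theta_l^\epsilon)^{-1}$ send a given $y$ to \emph{different} points of $\Omega_s$ for $l=-1$ and $l=0$); you would need to normalize by $\sum_k\varphi_k^\epsilon$, which is legitimate since the covering property keeps that sum bounded below, but it is an extra step the paper's direct covering argument avoids. The reduction to integer $s$ plus interpolation is consistent with how the fractional norms are defined in the paper.
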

\begin{proof}

Since by construction $\|\theta_l^\epsilon\|_{4.5,\mathcal{B} }\le C_l$, the first inequality is obvious.
For the second inequality, we simply notice that with $E =\{x\in \mathcal{B} |\ \zeta(x) = 1\}$,
$\Omega^\epsilon= \cup_{l=-1}^L \theta_l^\epsilon (E)$, 
so that
\begin{align*}
\|f\|_{s,\Omega^\epsilon}&\le C \sum_{l=-1}^L \|f\|_{s,\theta_l^\epsilon(E)}
\le C \sum_{l=-1}^L \|\zeta((\theta^\epsilon_l)^{-1}) f\|_{s,\theta_l^\epsilon(E)}\\
&\le C \sum_{l=-1}^L \|\zeta f(\theta_l^\epsilon)\|_{s,E} \|(\theta_l^\epsilon)^{-1}\|_{4.5,\theta^\epsilon_l(\mathcal{B} )}
\le C \sum_{l=-1}^L \|\zeta f(\theta_l^\epsilon)\|_{s,\mathcal{B} }\,
\end{align*}
where we used the fact that $\text{det}\nabla\theta_l^\epsilon>c_l>0$ for the last inequality.
\end{proof}

\section{Time of existence  $-T$ of solutions to (\ref{lageuler2})  is independent of $\epsilon$}\label{sec7}
Recall that for $ \epsilon >0$,
the functions $v^ \epsilon $, $\eta^ \epsilon $, and $ u^ \epsilon $, given by (\ref{veps})--(\ref{ueps}), are solutions to
the Euler equations (\ref{lageuler2}) on the time interval $[-T^ \epsilon ,0]$.    We now
prove that the time of existence $-T^ \epsilon $ is, in fact, independent of $ \epsilon $.

We begin by using the fundamental theorem of calculus to express the difference between the flow of two particles $x$ and $y$ as
$$\eta^\epsilon(x,t)-\eta^\epsilon(y,t)=x-y+\int_0^t [v^\epsilon(x,s)-v^\epsilon(y,s)]ds\,.$$

Next, for any $x$ and $y$ in $\Omega^\epsilon$ for which we do not have at the same time  $x\in\theta^\epsilon_-(B^+)$ and $y\in\theta^\epsilon_+(B^+)$,
we see that  independently of $\epsilon>0$ small enough,
\begin{equation}\label{cs1}
|\eta^\epsilon(x,t)-\eta^\epsilon(y,t)-(x-y)|\le C_1|t| \sup_{[-T^\epsilon,0]} E^\epsilon(t) \,  |x-y|\,,
\end{equation} 
where we have used the Sobolev embeddding theorem and where 
$$E^ \epsilon (t) = \|\eta^ \epsilon (t)\|^2_{4.5,\Omega^ \epsilon } 
+ \|v^ \epsilon (t)\|^2_{4,\Omega^ \epsilon }  + \| \operatorname{curl} v^ \epsilon (t)\|^2_{3.5,\Omega^ \epsilon }
+  \|v_t^ \epsilon (t)\|^2_{3.5,\Omega^ \epsilon } \,.$$

The inequality (\ref{cs1}) cannot be independent of $ \epsilon >0$ if both 
$x\in\theta^\epsilon_-(B^+)$ and $y\in\theta^\epsilon_+(B^+)$, for in this case, according to (\ref{verticallocal}),
$|x-y| = O( \epsilon)$ as $ \epsilon \to 0$,
whereas $|v^ \epsilon (x,t) - v^ \epsilon (y,t)| = O(1)$ as $ \epsilon \to 0$, and this, in turn,
yields  a global Lipschitz constant for $v^ \epsilon $ of  $O(\frac{1}{\epsilon})$ as $ \epsilon \to 0$.

When $x\in\theta^\epsilon_-(B^+)$ and $y\in\theta^\epsilon_+(B^+)$,  there exist constants $C_-$, $C_+$, and a polynomial
function $P_1$ which are each independent of $ \epsilon $, such that
\begin{align}
|\eta^\epsilon(x,t)-\eta^\epsilon(y,t)|&\ge |(\eta^\epsilon(x,t)-\eta^\epsilon(y,t))\cdot {\bf e_3}|\n\\
&\ge |(x-y)\cdot {\bf e_3} + t [u_s^\epsilon(x)-u_s^\epsilon(y)]\cdot {\bf e_3}|\n\\
&\ \ \ - \underbrace{\bigl|{\bf e_3}\cdot\int_0^t v^\epsilon(x,t')-u_s^\epsilon(x) dt'\bigr|}_{ \mathcal{I} _1}
-\underbrace{\bigl|{\bf e_3}\cdot\int_0^t v^\epsilon(y,t')-u_s^\epsilon(y) dt'\bigr|}_{ \mathcal{I} _2}    \n\\
& \ge \epsilon +  (C_- +C_+)|t| - t^2 P_1(\sup_{[-T^\epsilon,0]} E^\epsilon)\,,  \label{cs2}
\end{align}
where the triangle inequality has been employed together with
 (\ref{touch}) and (\ref{verticallocal}). 
In order to obtain the lower bound on the terms $ \mathcal{I} _1 $ and $ \mathcal{I} _2$, we again use the
fundamental theorem of calculus, and write
$$v^\epsilon(x,t')=u_s^\epsilon (x) +\int_0^{t'} v^\epsilon_t (x,\tau)d\tau \,;$$ 
using the definition of $E^\epsilon$, it follows that
$$\|v^\epsilon(\cdot ,t')-u_s^\epsilon (\cdot )\|_{L^\infty(\Omega^\epsilon)}  \le C |t'| P_1(\sup_{[-T^\epsilon,0]}E^\epsilon)\,.$$
We proceed to show how the two inequalities (\ref{cs1}) and (\ref{cs2})  (together with the fact that $C_->0$ and $C_+>0$) are used to prove that the time $-T$ is independent of $\epsilon$,  the flow map $\eta^\epsilon$ is injective  on $[-T,0]$, and
the a priori estimates for solutions of (\ref{lageuler2}) are independent of $ \epsilon $
on $[-T,0]$.

We first record our  basic polynomial-type a priori estimate,  given in Theorem
\ref{thm_apriori} in the appendix (see also  \cite{CoSh2007}\cite{CoSh2010}); we  find that on $[-T^\epsilon,0]$, 
\begin{equation}
\label{cs3}
\sup_{t\in [-T^\epsilon,0]} E^\epsilon(t) \le M_0^ \epsilon +|t| P_2(\sup_{[-T^\epsilon,0]} E^\epsilon(t))\,,
\end{equation}
where the constant $M_0^ \epsilon  = P(E^ \epsilon (0))$, i.e. the constant $M_0^ \epsilon $ only depends on initial data
(\ref{leuler.e}).   By Lemma \ref{cslemma1} and (\ref{est_useps}), we see that $M_0^ \epsilon$ is bounded by a constant
$M_0$ which is independent of  $ \epsilon $, so that $\sup_{t\in [-T^\epsilon,0]} E^\epsilon(t) \le 2 M_0$.

We therefore see that if we set
\begin{equation}
\label{cs4}
T=\min\left(\frac{1}{4C_1 M_0}, \frac{C_-+C_+}{2P_1(2 M_0)}, \frac{M_0}{2P_2(2M_0)}\right)\,,
\end{equation}
equation  (\ref{cs1}) implies that on $[-T,0]$,
\begin{equation}
\label{cs5}
|\eta^\epsilon(x,t)-\eta^\epsilon(y,t)| \ge \frac 1 2 |x-y| \ \text{ for } \ \ 
(x,y)\in \theta_l^\epsilon(\mathcal{B} )\times\theta_k^\epsilon(\mathcal{B} )\,, (l,k)\notin \{(-1,0),(0,-1)\}\,,
\end{equation}
while equation (\ref{cs2}) shows that on $[-T,0]$,
\begin{equation}
\label{cs6}
|\eta^\epsilon(x,t)-\eta^\epsilon(y,t)| \ge  \epsilon +  (C_- +C_+)\frac{|t|}{2} \ \text{ for all} \  (x,y)\in \theta_-^\epsilon(\mathcal{B} )\times\theta_+^\epsilon(\mathcal{B} )\,.
\end{equation}

We then have from (\ref{cs5}) and (\ref{cs6}) that the domain $\eta^\epsilon(t,\Omega^\epsilon)$ does not self-intersect for each $t\in[-T,0]$ and from (\ref{cs4}) we also have the estimate
\begin{equation}
\label{cs7}
\sup_{t\in [-T,0]} E^\epsilon(t) \le 2 M_0\,.
\end{equation}

Since $T>0$ is independent of $\epsilon$ by (\ref{cs4}), the estimates we have just obtained will permit the use of  weak convergence
to find the  initial domain $\Omega_0$ at $t=-T$ and  the initial velocity field  $u_0$ at $t=-T$, from which the free surface Euler equations, when run forward in time from $t=0$, will produce the self-intersecting splash domain $\Omega_s$ and velocity field $u_s$ at the final time $T>0$.

\section{Asymptotics as $\epsilon\rightarrow 0$ on the time-interval $[-T,0]$}\label{sec8}

\subsection{Construction of the initial domain $\Omega_0$: the asymptotic domain at $t=-T$}\label{sec::8-1} Theorem \ref{appendix_thm2}
provides continuity-in-time, and
 Lemma \ref{norms} together with the estimate (\ref{cs7}) shows that
\begin{equation*}
\sum_{l=-1}^L\|\zeta\ \eta^\epsilon(\theta_l^\epsilon, -T)\|^2_{4.5,\mathcal{B} }\le \frac{2}{C} M_0\,.
\end{equation*}
Weak compactness and Rellich's theorem provide 
the existence of a subsequence (which by abuse of notation we continue to denote by $\eta^\epsilon$) such that
\begin{subequations}
\begin{align}
\eta^\epsilon(\cdot, -T)\circ\theta_l^\epsilon & \rightharpoonup \Theta_l\,,\ \text{as}\ \epsilon\rightarrow 0\,,\ \text{in}\ 
H^{4.5}(\mathcal{B} _ \varsigma)\,,\label{cs8.a}\\
\eta^\epsilon(\cdot, -T)\circ\theta_l^\epsilon & \rightarrow \Theta_l\,,\ \text{as}\ \epsilon\rightarrow 0\,,\ \text{in}\ 
H^{3.5}(\mathcal{B} _\varsigma)\,,\label{cs8.b}
\end{align}
\end{subequations}
where $ \mathcal{B} _\varsigma = \mathcal{B} \cap B(0,\varsigma)$ and
 $ \varsigma$ is given in Definition \ref{zeta}.

We now define $\Omega_0$ as the union of the sets $\Theta_l(\mathcal{B} _ \varsigma )$ ($-1\le l\le L$).
Due to (\ref{cs5}), (\ref{cs6}) and (\ref{cs8.b}),  we have that
\begin{equation}
\label{cs9}
(x,y)\in \mathcal{B}_\varsigma \times \mathcal{B}_\varsigma \,, ((l,k)\notin \{(-1,0),(0,-1)\})\ \ \ |\Theta_l(x)-\Theta_k (y)| \ge \frac 1 2 |\theta_l (x)-\theta_k (y)|\,,
\end{equation}
and from (\ref{cs6}) on $[-T,0]$,
\begin{equation}
\label{cs10}
\forall (x,y)\in \mathcal{B}_\varsigma \times \mathcal{B}_\varsigma \,,\ 
|\Theta_-(x)-\Theta_+(y)|   \ge    (C_- +C_+)\frac{T}{2} \,,
\end{equation}
where $\Theta_- = \Theta_{-1}$ and $\Theta_+=\Theta_0$.  These inequalities show
 that the boundary of $\Omega_0$ does not self-intersect and that $\Omega_0$ is locally on one side of its boundary. Furthermore, setting $k=l$ in (\ref{cs9}), we see that each smooth map $\Theta_l$ is injective, and thus each $\Theta_l(
 \mathcal{B} _ \varsigma )$ is a domain, which implies that $\Omega_0$ is an open set of $\R^3$.     

\begin{lemma} \label{lemma1}
$\Omega_0$ is a connected,  $H^{4.5}$-class domain, which is locally on one side of its boundary.
\end{lemma}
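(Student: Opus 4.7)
The plan is to verify openness, connectedness, the $H^{4.5}$-class structure, and local one-sidedness of $\Omega_0$ by passing to the limit from the approximating flows $\eta^\epsilon(\cdot,-T)\circ\theta_l^\epsilon$.

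First, I would establish that each limiting map $\Theta_l\colon \mathcal{B}_\varsigma\to\mathbb{R}^3$ is an $H^{4.5}$ diffeomorphism onto its image. Injectivity is immediate from (\ref{cs9}) with $k=l$, since each $\theta_l$ is itself a diffeomorphism on $\mathcal{B}_\varsigma$. To promote injectivity to the local-diffeomorphism property, I would exploit incompressibility: the volume-preserving identity $\det D\eta^\epsilon(\cdot,-T)\equiv 1$ forces $\det D(\eta^\epsilon(-T)\circ\theta_l^\epsilon)=\det D\theta_l^\epsilon$, which is bounded away from zero uniformly in $\epsilon$ and converges uniformly to $\det D\theta_l>0$ by the strong $H^{3.5}$ convergence (\ref{cs8.b}) and Sobolev embedding. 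Combined with the uniform $H^{4.5}$ bound coming from (\ref{cs7}) and Lemma \ref{norms}, this yields that each $\Theta_l$ is an $H^{4.5}$ diffeomorphism onto an open subset of $\mathbb{R}^3$, with $\Theta_l(B^0\cap B(0,\varsigma))\subset\partial\Omega_0$ for boundary charts $-1\le l\le K$ and $\Theta_l(\mathcal{B}_\varsigma)$ interior for $K+1\le l\le L$.

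Next, I would deduce openness, local one-sidedness, and the $H^{4.5}$-class structure. Openness of $\Omega_0$ is a direct consequence of the previous step, since each $\Theta_l$ is an open map. For local one-sidedness, the only potential trouble is near the former splash locus, where a priori $\Theta_-(B^0\cap B(0,\varsigma))$ and $\Theta_+(B^0\cap B(0,\varsigma))$ could have met; however, the uniform lower bound (\ref{cs10}) with constant $(C_-+C_+)T/2>0$ rules out precisely this coincidence. Combining this with (\ref{cs9}) for the remaining chart pairs, the boundary $\partial\Omega_0=\bigcup_{l=-1}^K\Theta_l(B^0\cap B(0,\varsigma))$ is an embedded $H^4$-surface that locally separates $\Omega_0$ from its exterior, and the maps $\{\Theta_l\}$ themselves serve as the $H^{4.5}$ charts required by Definition \ref{def1}.

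Finally, for connectedness, I would argue that each $\Omega^\epsilon$ is connected (as an $H^{4.5}$-class domain constructed in Lemma \ref{approx_domain}) and that $\eta^\epsilon(\cdot,-T)$ is a homeomorphism onto its image by (\ref{cs5})--(\ref{cs6}), so each $\eta^\epsilon(-T,\Omega^\epsilon)$ is connected. The pairwise overlap graph of the cover $\{\theta_l^\epsilon(\mathcal{B}_\varsigma)\}_{l=-1}^L$ of $\Omega^\epsilon$ transfers under the homeomorphism $\eta^\epsilon(-T)$ and is preserved in the limit by the strong $H^{3.5}$ convergence (\ref{cs8.b}), which gives that $\bigcup_l\Theta_l(\mathcal{B}_\varsigma)$ is connected. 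I expect the main technical hurdle to be cleanly transferring the diffeomorphism property to the weak limit, since openness of $\Omega_0$ and non-collapse of its interior near the former splash point both ultimately rest on the fact that the bi-Lipschitz estimates (\ref{cs9})--(\ref{cs10}) and the Jacobian identity $\det D\eta^\epsilon\equiv 1$ are uniform in $\epsilon$ and survive passage to the limit.
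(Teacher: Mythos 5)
Your proposal rests on the same core ingredients as the paper's proof: injectivity of each $\Theta_l$ from (\ref{cs9}) with $k=l$, the Jacobian lower bound $\det D\Theta_l=\det D\theta_l\ge c_l>0$ obtained from volume preservation together with the strong convergence (\ref{cs8.b}), the separation estimate (\ref{cs10}) to keep the two former splash charts apart, and the weak convergence (\ref{cs8.a}) for the $H^{4.5}$ regularity. Your connectedness argument, however, takes a genuinely different (and valid) route: you pass connectedness through the overlap graph of the cover $\{\Theta_l(\mathcal{B}_\varsigma)\}$, using that an overlap $\theta_l^\epsilon(x)=\theta_k^\epsilon(y)$ persists in the limit as $\Theta_l(x)=\Theta_k(y)$. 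The paper instead fixes two points of $\Omega_0$, approximates them by $X^\epsilon,Y^\epsilon\in\eta^\epsilon(\Omega^\epsilon_\beta,-T)$ for a slightly shrunk set $\Omega^\epsilon_\beta$, shows $\eta^\epsilon(\Omega^\epsilon_\beta,-T)\subset\Omega_0$ for small $\epsilon$ by uniform convergence, and concatenates three explicit paths. Both work; yours is arguably cleaner, though it silently invokes the standard fact that a connected set covered by connected open sets has a connected nerve.

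The one step you under-argue is the identification $\partial\Omega_0=\cup_{l=-1}^K\Theta_l(B_\varsigma^0)$, which you assert from (\ref{cs9})--(\ref{cs10}) alone. Those inequalities bound distances between points in distinct chart images, but they do not by themselves show that the images $\Theta_l(\partial\mathcal{B}_\varsigma\cap\{x_3>0\})$ of the artificial (spherical) portions of the chart boundaries are \emph{interior} to $\Omega_0$; were some such point a boundary point, $\Omega_0$ would fail to be locally on one side of an $H^4$ boundary there. The paper's Step 3 devotes most of its length to exactly this: since the shrunk charts $\theta_k(\mathcal{B}_\varsigma)$ still cover the splash domain, any such point equals $\theta_k(y)$ for some $k$ and $y\in\mathcal{B}_\varsigma$; the $\epsilon$-charts are unmodified at such points (they lie away from the splash neighborhood), so $\eta^\epsilon(\theta_l^\epsilon(x),-T)=\eta^\epsilon(\theta_k^\epsilon(y),-T)$ and hence $\Theta_l(x)=\Theta_k(y)\in\Theta_k(\mathcal{B}_\varsigma)\subset\Omega_0$ as in (\ref{cs17bis}). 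The converse inclusion, that points of $\Theta_l(B_\varsigma^0)$ lie in no $\Theta_k(\mathcal{B}_\varsigma)$, again uses (\ref{cs9}), (\ref{cs10}) and the fact that the flat parts $\theta_k(B_\varsigma^0)$ exhaust $\partial\Omega_s$. You should add this two-way verification; the rest of your outline is sound.
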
 
\begin{proof}  
{\it Step 1.}
We begin by proving that $\Omega_0$ is connected.
To this end,  fix $X$ and $Y$ in $\Omega_0$ so that 
 $X\in \Theta_l(\mathcal{B}_\varsigma )$ and $Y\in \Theta_j(\mathcal{B}_\varsigma )$ ($-1\le l,j \le L$). 
We let $(x,y)\in \mathcal{B}_\varsigma \times \mathcal{B}_\varsigma $ be such that $X=\Theta_l(x)$ and $Y=\Theta_j(y)$, and we define
\begin{align*} 
 X^\epsilon& =\eta^\epsilon(\theta_l^\epsilon(x),-T)\in \eta^\epsilon(\Omega^\epsilon,-T) \\
 Y^\epsilon& =\eta^\epsilon(\theta_j^\epsilon(y),-T)\in \eta^\epsilon(\Omega^\epsilon,-T) \,.
 \end{align*} 
For $\beta>0$, we set
$$\Omega^\epsilon_\beta=\{Z\in \Omega^\epsilon\ |\ \text{dist}(Z,\partial\Omega^\epsilon)>\beta\}\,.$$ 
Then for $\beta>0$ small enough, we have that 
$\Omega^\epsilon_\beta$ is connected and $X^\epsilon$ and $Y^\epsilon$ are in $\eta^\epsilon(\Omega^\epsilon_\beta,-T)$.

From (\ref{cs8.b}) we infer that each $\eta^\epsilon(\theta_l^\epsilon,-T)$ uniformly converges to $\Theta_l$ in $\mathcal{B}_\varsigma $;  thus, for $\epsilon>0$ small enough,  we find that
\begin{subequations}
\label{cs11}
\begin{align}
\eta^\epsilon(\Omega^\epsilon_\beta,-T)&\subset \Omega_0\,,\label{cs11.a}\\
X^\epsilon \in \Theta_l(\mathcal{B}_\varsigma )\,,\ \ \ &Y^\epsilon \in \Theta_j(\mathcal{B}_\varsigma )\,.\label{cs11.b}
\end{align}
\end{subequations}

Now, as $\Omega^\epsilon_\beta$ is a connected set, so is $\eta^\epsilon(\Omega^\epsilon_\beta,-T)$. 
Since $X^\epsilon$ and $Y^\epsilon$ are in this connected set, we let $C_{X^\epsilon,Y^\epsilon}$ denote a continuous 
path included in 
$\eta^\epsilon(\Omega^\epsilon_\beta,-T)$, and having $X^\epsilon$ and $Y^\epsilon$ as its end-points. 
From (\ref{cs11.a}), $C_{X^\epsilon,Y^\epsilon}\subset \Omega_0$.

Next since both $X$ and $X_\epsilon$ belong to the connected set $\Theta_l(\mathcal{B}_\varsigma )$, let $C_{X,X^\epsilon}$ denote
 a continuous path included in $\Theta_l(\mathcal{B}_\varsigma )\subset\Omega_0$ and having $X$ and $X^\epsilon$ as end-points. 
Similarly, we let $C_{Y^\epsilon, Y}$ denote a continuous path included in $\Theta_j(\mathcal{B}_\varsigma )\subset\Omega_0$ and having $Y^\epsilon$ and $Y$ as its end-points.
We then see that the union of these three paths joins $X$ to $Y$ and is contained in $\Omega_0$, which shows that $\Omega_0$ is connected.

\noindent
{\it Step 2.} 
The fact that
$\Omega_0$ is an $H^{4.5}$-class domain follows immediately  from the convergence given in
 (\ref{cs8.a}).

\noindent
{\it Step 3.}  We conclude by showing that
 $\Omega_0$ is locally on one side of its boundary, and that with $B_\varsigma^0= B^0\cap B(0, \varsigma )$,
\begin{equation}
\label{cs12bis}
\partial\Omega_0=\cup_{l=-1}^K \Theta_l(B_\varsigma^0)\,,
\end{equation}
which will indeed complete the proof that $\Omega_0$ is a standard  $H^{4.5}$-class domain.

To this end we first notice from (\ref{cs8.b}) and the fact that $\eta^\epsilon$ is volume preserving, that for each $l$,
\begin{equation}
\label{cs13bis}
\text{det}\nabla\Theta_l=\text{det}\nabla\theta_l\ge c_l>0\,.
\end{equation}

Also, from (\ref{cs9}) used when $k=l$, we notice that each $\Theta_l$ is an injective map, which with (\ref{cs13bis}) provides
\begin{equation*}
\partial[\Theta_l(\mathcal{B}_\varsigma )]=\Theta_l(\partial \mathcal{B}_\varsigma )\,.
\end{equation*}

Therefore,
\begin{equation}
\label{cs14bis}
\partial\Omega_0\subset \cup_{l=-1}^L \Theta_l(\partial \mathcal{B}_\varsigma )\,.
\end{equation}

Now, let us fix $x\in \partial \mathcal{B}_\varsigma \cap\{x_3>0\}$. We then have (since the only modified charts are modified close to the origin) that for any $-1\le l\le K$,
\begin{equation}
\nonumber
\theta_l^\epsilon(x)=\theta_l(x)\,.
\end{equation}

We also notice that there exists $-1\le k\le L$ and $y\in \mathcal{B}_\varsigma $ such that $\theta_l(x)=\theta_k(y)$ (since $\Omega=\cup_{k=-1}^L\theta_k( \mathcal{B}_\varsigma )$).

We also have that
$\theta_k^\epsilon(y)=\theta_k(y)$, for 
otherwise  $k$ would be equal to either $-1$ or $0$, in which case $\theta_k(y)$ would be  in a very small neighborhood of  $x_0$,
which, in turn, would imply that $l$ must be equal to $k$ (since the charts $\theta_-$ or $\theta_+$ do not intersect the other charts in a small neighborhood of $x_0$), but then we would not be able to
have $x$ at a distance $\varsigma$ from the origin.

We then have $\eta^\epsilon(\theta_l^\epsilon(x), -T)=\eta^\epsilon(\theta_k^\epsilon(y), -T)$ which with (\ref{cs8.b}) implies that
\begin{equation}
\label{cs17bis}
\Theta_l(x)=\Theta_k(y)\in \Theta_k(\mathcal{B}_\varsigma )\subset \Omega_0\,.
\end{equation}

We can prove the same inclusion in a similar way if $x\in \partial \mathcal{B}_\varsigma $ and $K+1\le l\le L$. With (\ref{cs14bis}), this yields
\begin{equation}
\label{cs18bis}
\partial\Omega_0\subset \cup_{l=-1}^K \Theta_l(B_\varsigma^0)\,.
\end{equation}

Now, for $X\in \Theta_l(B_\varsigma^0)$, we have $X=\Theta_l(x)$, with $x\in B_\varsigma^0$. 
Now for any $y\in \overline{ \mathcal{B} _\varsigma}$ such that there exists $-1\le k\le L$ satisfying $X=\Theta_k(y)$,
 we see from (\ref{cs10}) that $(l,k)\notin \{(-1,0),(0,-1)\}$. 
 Therefore, from (\ref{cs9}) we have $\theta_l(x)=\theta_k(y)$.  
 Then, as  $\partial\Omega=\cup_{k=-1}^K \theta_k(B_{\varsigma}^0)$, we see 
 that $y\in B_\varsigma^0$.  Therefore, we have proved that $X$ does not belong to 
 $\cup_{k=-1}^L \Theta_k(\mathcal{B}_\varsigma )=\Omega_0$. Thus $X\in\partial\Omega_0$, which establishes  (\ref{cs12bis}).

 Together with (\ref{cs13bis}) and (\ref{cs10}), this establishes that $\Omega_0$ is a smooth domain locally on one side of its boundary, and concludes the proof.
\end{proof}

\subsection{Asymptotic velocity at $-T$ in the limit $ \epsilon \to 0$}
From our equivalence Lemma \ref{norms} and (\ref{cs7}), 
\begin{equation*}
\sum_{l=-1}^L\|\zeta \ v^\epsilon(\theta_l^\epsilon, -T)\|^2_{4,\mathcal{B} }\le \frac{2M_0}{C}\,,
\end{equation*}
which shows the existence of a subsequence (which we continue to denote  by  the index $\epsilon$) such that
\begin{subequations}
\label{css11}
\begin{align}
v^\epsilon(\cdot,-T)\circ\theta_l^\epsilon & \rightharpoonup V_l\ \ \text{as}\ \epsilon\rightarrow 0\,,\ \text{in}\ H^4(\mathcal{B} _\varsigma)\,,\label{css11.a}\\
v^\epsilon(\cdot,-T)\circ\theta_l^\epsilon & \rightarrow V_l\ \ \text{as}\ \epsilon\rightarrow 0\,,\ \text{in}\ H^3(\mathcal{B} _ \varsigma )\,.\label{css11.b}
\end{align}
\end{subequations}
We now define $u_0$ on $\Omega_0$ as follows:
\begin{equation}
\label{cs12}
\forall l\in \{-1,0,1, 2, ..., L\}\,,\ u_0(\Theta_l)=V_l\ \text{on}\ \mathcal{B}_\varsigma \,.
\end{equation}
In order to justify the definition in (\ref{cs12}), we have to check that if $\Theta_l(x)=\Theta_j(y)$, for $x$ and $y$ in $\mathcal{B}_\varsigma $, then $V_l(x)=V_l(y)$. 
We first notice that if $\Theta_l(x)=\Theta_j(y)$, then by (\ref{cs10}) we have $(l,k)\notin\{(-1,0),(0,-1)\}$. From (\ref{cs9}), we then infer that $\theta_l(x)=\theta_j(y)$ and thus  $|\theta_l^\epsilon(x)-\theta_j^\epsilon(y)|\le c_\epsilon$, with $\lim_{\epsilon\rightarrow 0} c_\epsilon=0$. 

This then, in turn, shows that 
\begin{align*}
|v^\epsilon(\theta_l^\epsilon(x),-T)-v^\epsilon(\theta_j^\epsilon(y),-T)|&\le |\nabla v^\epsilon(\cdot,-T)|_{L^\infty(\Omega^\epsilon(-T))} |\theta_l^\epsilon(x)-\theta_j^\epsilon(y)|\\
& \le c_\epsilon |\nabla v^\epsilon(\cdot,-T)|_{L^\infty(\Omega^\epsilon(-T))}\,,
\end{align*}
which, thanks to (\ref{cs7}), implies that
\begin{equation*}
|v^\epsilon(\theta_l^\epsilon(x),-T)-v^\epsilon(\theta_j^\epsilon(y),-T)|\le  c_\epsilon \sqrt{M_0}\,.
\end{equation*}
By using (\ref{css11.b}), this then implies at the limit $\epsilon\rightarrow 0$:
\begin{equation*}
|V_l(x)-V_j(y)|\le  0\,,
\end{equation*}
which concludes the proof. Also from (\ref{cs12}), we have that $u_0\in H^4(\Omega_0)$, with $\|u_0\|^2_{4,\Omega_0}\le 2 M_0$.

\subsection{Asymptotic domain and velocity on $(-T,0]$ in the limit $ \epsilon \to 0$}\label{sec::asymp}
From our estimate (\ref{cs7}) we then infer the existence of a subsequence (of the subsequence constructed in Section \ref{sec::8-1} and still denoted by a superscript $\epsilon$) such that for all $l=-1,0,1,...,L$
\begin{subequations}
\begin{align}
\partial_t v^\epsilon\circ\theta_l^\epsilon &\rightharpoonup \partial_t v\circ\theta_l\,,\ \text{in}\ L^2(-T,0;H^{3.5}(\mathcal{B}_\varsigma ))\,,  \\
v^\epsilon\circ\theta_l^\epsilon &\rightharpoonup v\circ\theta_l\,,\ \text{in}\ L^2(-T,0;H^4(\mathcal{B}_\varsigma ))\,,
\label{new4-13.1}\\
\eta^\epsilon\circ\theta_l^\epsilon &\rightharpoonup \eta\circ\theta_l\,,\ \text{in}\ L^2(-T,0;H^{4.5}(\mathcal{B}_\varsigma ))\,,
\label{new4-25.1}
\end{align}
\end{subequations}
Next, let $\{\phi_n\}_{n=1}^ \infty $ denote a countable dense set in $H^4 ( \mathcal{B} _\varsigma)$.  We next define the sequence
$f_n^ \epsilon : [-T,0] \to \mathbb{R}  $ by
$$
f_n^ \epsilon (t)  = ( [\eta^\epsilon\circ\theta_l^\epsilon](\cdot,t), \phi_n)_4 \,,
$$
where $(\cdot,\cdot)_4$ denotes the standard inner-product on $H^4 ( \mathcal{B} _\varsigma)$.
Now, for fixed $n$, the uniform bound (\ref{cs7}) together with the fundamental theorem of calculus shows that for a positive constant $\mathfrak{M} < \infty $, 
$\| f_n ^ \epsilon \|_{ C^0 ([-T,0])} \le \mathfrak{M}$ and that $f_n^ \epsilon $ is equicontinuous (as a sequence of functions indexed by the sequence $ \epsilon $).  By the 
Arzela-Ascoli theorem, there exists a subsequence (which we continue to denote by $\epsilon$) such that $f_n^ \epsilon \to f_n$ uniformly on $[-T,0]$. This uniform convergence then implies for all $t\in [-T,0]$ that 
$$\int_0^t f_n^\epsilon(s) ds\rightarrow \int_0^t f_n(s) ds\,.$$
 Due to (\ref{new4-25.1}) we also have (with test function $1_{[0,t]} \phi_n$) that 
 $$\int_0^t f_n^\epsilon(s) ds\rightarrow \int_0^t ([\eta\circ\theta_l](\cdot,t), \phi_n)_4 ds\,,$$
 which by comparison with the previous relation, then shows that
 $$\int_0^t f_n(s) ds= \int_0^t ([\eta\circ\theta_l](\cdot,t), \phi_n)_4 ds\,.$$ Since both integrands are continuous with respect to time, this provides us by differentiation that for all $t\in[-T,0]$,
$$
f_n (t)  = ([\eta\circ\theta_l](\cdot,t), \phi_n)_4  \,.
$$
Next,  since $\{\phi_n\}$ is countable, we may employ the standard diagonal argument to extract a further subsequence (still denoted by $ \epsilon $) such that for all $t\in [-T,0]$,
$$
 ([\eta^\epsilon\circ\theta_l^\epsilon](\cdot,t), \phi)_4  \to   ([\eta\circ\theta_l](\cdot,t), \phi)_4
 $$
for any $\phi \in H^4( \mathcal{B}_\varsigma)$.   This then establishes the existence of a {\it single} subsequence,  such that for all $t\in [-T,0]$, 
\begin{align}
\label{new4-13.2}
\eta^\epsilon\circ\theta_l^\epsilon(\cdot,t)&\rightharpoonup \eta\circ\theta_l(\cdot,t)\,,\ \text{in}\ H^4(\mathcal{B}_\varsigma )\,.
\end{align}

A similar argument shows that for the same subsequence (refined if necessary) and for all $t \in [-T,0]$ ,
\begin{align}
\label{new4-13.3}
v^\epsilon\circ\theta_l^\epsilon (\cdot,t) &\rightharpoonup v\circ\theta_l(\cdot,t) \ \text{in}\ H^{3.5}(\mathcal{B}_\varsigma )\,.
\end{align}

 Theorem \ref{appendix_thm2}
providing continuity-in-time, together with the estimate (\ref{cs7}), we have that for all $t\in [0,T]$,

\begin{subequations}
\begin{align}
\|v^\epsilon\circ\theta_l^\epsilon (\cdot,t)\|^2_{H^{4}(\mathcal{B}_\varsigma )}\le C M_0\,,\\
\|\eta^\epsilon\circ\theta_l^\epsilon (\cdot,t)\|^2_{H^{4.5}(\mathcal{B}_\varsigma )}\le C M_0\,.
\end{align}
\end{subequations}
Together with (\ref{new4-13.2}), this shows that for all $t\in [-T,0]$, for the {\it same sequences} $\eta^\epsilon$, $ v^ \epsilon $, and $\tilde\theta_l^ \epsilon $ as in (\ref{new4-13.2}) 
and (\ref{new4-13.3}), we have the following convergence (by an argument of uniqueness of the weak limit):
\begin{subequations}
\begin{align}
v^\epsilon\circ\theta_l^\epsilon (\cdot,t) &\rightharpoonup v\circ\theta_l(\cdot,t) \ \text{in}\ H^{4}(\mathcal{B}_\varsigma )\,,\\
\eta^\epsilon\circ\theta_l^\epsilon(\cdot,t)&\rightharpoonup \eta\circ\theta_l(\cdot,t) \ \text{in}\ H^{4.5}(\mathcal{B}_\varsigma )\,.
\end{align}
\end{subequations}

Having established the asymptotic limit as $ \epsilon \to 0$ when $t=-T$, we next consider the time interval $(-T,0)$.
We employ the identical argument for taking the limit as $ \epsilon \to 0$ for
the case that $-T<t<0$ as for the case that $t=-T$, leading to an asymptotic domain $\Omega(t)$ of class $H^{4.5}$ and an
Eulerian velocity field $u(\cdot, t)\in H^4(\Omega(t))$ with $\|u(t,\cdot)\|_{4,\Omega(t)}\le  M_0$.


At time $t=0$, there is a slight difference in the asymptotic limit $ \epsilon \to 0$, in the sense that the limit domain is the 
{\it splash domain}  $\Omega_s$, which is a 
self-intersecting generalized $\bf H^{4.5}$-domain, with
the corresponding  limit velocity field is $u_s\in H^{4.5}(\Omega_s)$. 
This limit simply comes from the fact that 
$\|\theta_i^\epsilon-\theta_i\|_{4.5,B^+}\rightarrow 0$ and $\|u_s^\epsilon\circ\theta_i^\epsilon-u_s\circ\theta_i\|_{4.5,B^+}\rightarrow 0$ as $\epsilon\rightarrow 0$.




 
\subsection{Asymptotic Euler equations}
It remains for us to prove that 
$$u_f(x,t)=u(x,t-T)\,, \ \ \ 0\le t\le T$$
 is indeed a solution of the free-surface Euler equations on the moving domain
$$\Omega_f(t)=\Omega(t-T)\,,$$
which evolves
the initial velocity $u_0$ and initial domain $\Omega_0$ onto the final data at time $t=T$
given by  $u_s$ and $\Omega_s$.  This will, in turn,  establish the fact that after a finite time $T$,  the free-surface of the 3-D Euler equations develops a splash singularity.   

We again consider the asymptotic limit as $ \epsilon \to 0$.
For each $\epsilon>0$ fixed, we solve the Euler equations forward-in-time using as initial data,
$\Omega^\epsilon (-T)$ for the domain,  and $u^\epsilon (\cdot,-T)$ for the initial velocity.

To this end, we first define the forward in time quantities for $0\le t\le T$ by
\begin{equation}
\nonumber
\Omega_f^\epsilon(t)=\Omega^\epsilon(t-T)\,,
\end{equation}
and
\begin{subequations}
\begin{alignat*}{2}
u_f^\epsilon (\cdot , t)&=u^\epsilon(\cdot , t-T) &&  \text{in}\ \Omega_f^\epsilon(t)\,,\\
\eta_f^\epsilon(\cdot , t)&=\eta^\epsilon(\cdot , t-T)\circ\eta^\epsilon(\cdot , -T)^{-1} &&  \text{in}\ \Omega_f^\epsilon(0)\,,\\
v_f^\epsilon(\cdot , t)&=v^\epsilon(\cdot , t-T)\circ\eta^\epsilon(\cdot , -T)^{-1} &&  \text{in}\ \Omega_f^\epsilon(0)\,,\\
p_f^\epsilon( \cdot , t)&=p^\epsilon(\cdot , t-T) &&  \text{in}\ \Omega_f^\epsilon(t)\,,\\
q_f^\epsilon( \cdot , t)&=q^\epsilon(\cdot , t-T)\circ\eta^\epsilon(\cdot , -T)^{-1} \qquad &&  \text{in}\ \Omega_f^\epsilon(0)\,.
\end{alignat*}
\end{subequations}
It follows that
\begin{subequations}
\begin{alignat*}{2}
\operatorname{div}u_f^\epsilon &=0 && \  \text{in} \ \Omega_f^\epsilon(t)\,,\\
v_f^\epsilon =u_f^\epsilon\circ\eta_f^\epsilon & =\partial_t \eta_f^\epsilon \qquad && \ \text{in}\ \Omega_f^\epsilon(0)\,,\\
\eta_f^\epsilon(\cdot,0)&=e && \ \text{in}\ \Omega_f^\epsilon(0)\,.
\end{alignat*}
\end{subequations}

From the definitions of $v^ \epsilon $, $\eta^ \epsilon $, and  $u^\epsilon$ in  (\ref{veps})--(\ref{ueps}) and by
uniqueness of solutions to (\ref{lageuler2}), we see that $(u_f^ \epsilon ,p_f^ \epsilon )$ is a solution of (\ref{euler}) on $[0,T]$ with initial domain 
$\Omega_f^\epsilon(0)$ and  initial velocity  $u_f^\epsilon(0)$, with the  domain and velocity at time $t=T$ equal to 
$\Omega^\epsilon$ and $u_s^\epsilon$, respectively. 

In order to analyze the limiting behavior of these solutions as $ \epsilon \to 0$, we write the Euler equations in Lagrangian
form on the fixed domain $\mathcal{B}_\varsigma $ by pulling back the equations from the reference domain $\Omega_f^\epsilon(0)$
using  the following local coordinate charts:
\begin{equation}\nonumber
\tilde\theta_l^\epsilon=\eta^\epsilon(\theta_l^\epsilon, -T)\ \text{ for } \ l=-1,0,1,2,...,L \,.
\end{equation}
Denoting  the local inverse-deformation tensor by
 $$\tilde b^ \epsilon _l=[\nabla(\eta_f^\epsilon\circ\tilde\theta_l^\epsilon)]^{-1}\,,
 $$
 for $-1\le l\le K$, solutions of the Euler equations satisfy 
\begin{subequations}
  \label{leulerbis}
\begin{alignat}{2}
\eta_f^\epsilon\circ\tilde\theta_l^\epsilon &=\tilde\theta_l^\epsilon + \int_0^t v_f^\epsilon\circ\tilde\theta_l^\epsilon\ \ \   &&\text{in} \ \mathcal{B}_\varsigma  \times (0,T]\,, \label{leulerbis.a}\\
\partial_t v_f^\epsilon\circ\tilde\theta_l^\epsilon+ [\tilde b_l^\epsilon]^T \,\nabla (q_f^\epsilon\circ\tilde\theta_l^\epsilon)&=0  &&\text{in} \ \mathcal{B}_\varsigma  \times (0,T)\,, \label{leulerbis.b}\\
\operatorname{div} _{\eta_f^\epsilon\circ \tilde\theta_l^\epsilon}
v_f^\epsilon\circ\tilde\theta_l^\epsilon  &= 0     &&\text{in} \   \mathcal{B}_\varsigma  \times (0,T)
\,, \label{leulerbis.c}\\
q_f^ \epsilon \circ\tilde\theta_l^\epsilon &= 0 \ \ &&\text{on} \  B_0 \times (0,T) \,, \label{leulerbis.d}\\
   (\eta_f^\epsilon,v_f^\epsilon)\circ\tilde\theta_l^\epsilon &= (e, u_f^\epsilon(0))\circ\tilde\theta_l^\epsilon  &&\text{on} \ \mathcal{B}_\varsigma \times\{t=0\} \,, \label{leulerbis.e}                                        
\end{alignat}
\end{subequations}
together with
\begin{equation} 
 \eta_f^\epsilon(\Omega_f^\epsilon(0), T)=\Omega^\epsilon\,.\qquad\qquad\qquad\qquad\qquad
\tag{\ref{leulerbis}f}
\end{equation} 
For $l=K+1,...,L$ the same equations are satisfied with the exception of the boundary condition (\ref{leulerbis}d).

Our a priori estimate Theorem \ref{thm_apriori} shows that for each $l=-1,0,1,2,...,L$
$$
\sup_{t \in [0,T]} \left(
\| \eta^ \epsilon _f(t) \circ\tilde\theta_l^\epsilon\|^2_{4.5, \mathcal{B} _ \varsigma } + \| v^ \epsilon _f(t)\circ\tilde\theta_l^\epsilon\|^2_{4, \mathcal{B} _ \varsigma } 
+  \| q^ \epsilon _f(t)\circ\tilde\theta_l^\epsilon\|^2_{4.5, \mathcal{B} _ \varsigma } \right) \le 2 \tilde M_0^ \epsilon \,,
$$
where $\tilde M_0^ \epsilon $ is a constant that depends on  the $H^{4.5}$-norms of $\tilde\theta_l^\epsilon$ and the
$H^4$-norm of $u_f^ \epsilon (0)$.   Thanks to Lemma \ref{lemma1} and the convergence in  (\ref{css11}), we see that $\tilde M_0^ \epsilon$
is bounded by a constant which is independent of $ \epsilon $.
As such, we have the following convergence in two weak topologies and  one strong topology:
\begin{subequations}
\begin{align}
v_f^\epsilon\circ\tilde\theta_l^\epsilon &\rightharpoonup v_f\circ\Theta_l\,,\ \text{in}\ L^2(0,T;H^4(\mathcal{B}_\varsigma ))\,,\\
\eta_f^\epsilon\circ\tilde\theta_l^\epsilon &\rightarrow \eta_f\circ\Theta_l\,,\ \text{in}\ L^2(0,T;H^3(\mathcal{B}_\varsigma ))\,,\\
q_f^\epsilon\circ\tilde\theta_l^\epsilon &\rightharpoonup q_f\circ\Theta_l\,,\ \text{in}\ L^2(0,T;H^{4.5}(\mathcal{B}_\varsigma ))\,,
\end{align}
\end{subequations}
which together with the 
convergence in  (\ref{cs8.b})  shows, in a manner similar as in Section \ref{sec::asymp}, that for $l=-1,0,1,2,...,K$,  the limit as $ \epsilon \to 0$ of the sequence of solutions to 
(\ref{leulerbis})  is indeed a solution of 
\begin{subequations}
  \label{leulerter}
\begin{alignat}{2}
\eta_f\circ\Theta_l &=\Theta_l + \int_0^t v_f\circ\Theta_l\ \ \   &&\text{in} \ \mathcal{B}_\varsigma  \times (0,T]\,, \label{leulerter.a}\\
\partial_t v_f\circ\Theta_l+[\mathfrak{b}  _l]^T\,\nabla (q_f\circ\Theta_l)&=0  &&\text{in} \ \mathcal{B}_\varsigma  \times (0,T)\,, \label{leulerter.b}\\
  \operatorname{div} _{\eta_f\circ\Theta_l}v_f\circ\Theta_l &= 0     &&\text{in} \   \mathcal{B}_\varsigma  \times (0,T)
\,, \label{leulerter.c}\\
q_f\circ\Theta_l &= 0 \ \ &&\text{on} \  B_0 \times (0,T) \,, \label{leulerter.d}\\
   (\eta_f,v_f)\circ\Theta_l &= (e, u_0)\circ\Theta_l  &&\text{on} \ \mathcal{B}_\varsigma \times\{t=0\} 
                                                 \,, \label{leulerter.e}\\ 
 \eta_f(T,\Omega_0)&=\Omega_s\,,                                                 \end{alignat}
\end{subequations}
where $\mathfrak{b}  _l=[\nabla(\eta_f\circ\Theta_l)]^{-1}$, and 
where $v_f$, $q_f$ and $\eta_f$ are the forward in time velocity, pressure and displacement fields. 

A similar system holds for the interior charts $\Theta_l$, with $K+1\le l\le L$, with the exception of the boundary condition (\ref{leulerter.d}). Therefore, since the charts $\Theta_l$ define $\Omega_0$, we have established that
\begin{subequations}
  \label{leulerqua}
\begin{alignat}{2}
\eta_f &=e + \int_0^t v_f\ \ \   &&\text{in} \ \Omega_0 \times (0,T]\,, \label{leulerqua.a}\\
\partial_t v_f+ A_f^T \,\nabla q_f&=0  &&\text{in} \ \Omega_0 \times (0,T)\,, \label{leulerqua.b}\\
  \operatorname{div} _{\eta_f}  v_f  &= 0     &&\text{in} \   \Omega_0 \times (0,T)
\,, \label{leulerqua.c}\\
q_f &= 0 \ \ &&\text{on} \  \partial\Omega_0 \times (0,T) \,, \label{leulerqua.d}\\
   (\eta_f,v_f) &= (e, u_0)  &&\text{on} \ \Omega_0\times\{t=0\} 
                                                 \,, \label{leulerqua.e} \\
                                                 \eta_f(T,\Omega_0)&=\Omega_s\,,
\end{alignat}
\end{subequations}
where the matrix $A_f = [D\eta_f]^{-1} $.
By a return to Eulerian variables this means that $(u_f,p_f)$ is solution of (\ref{euler}) with initial domain and velocity $\Omega_0$ and $u_0$, respectively,  and final domain and velocity at time $t=T$ equal to the splash domain $\Omega_s$ and $u_s$.

\section{The splat domain $\bf\Omega_s$ and its approximation by standard domains $\bf\Omega^ \epsilon $}\label{sec9}
\label{sec:splatdomain}
\subsection{The splat domain} Whereas our splash domain has a boundary which self-intersects a point $x_0$, an
obvious generalization allows to define the so-called {\it splat domain}  $\bf \Omega_s$,  with  boundary $\bf \p \Omega _s$ which self-intersects on an open subset $\Gamma_0$ of $ \bf \p \Omega _s$.
%
%

%
\subsubsection{The definition of the splat domain}

\begin{enumerate}
\item We suppose that $\Gamma_0  \subset {\bf\Gamma_s}:= \bf \partial \Omega_s$ is the unique boundary self-intersection surface,
 i.e., $\bf \Omega_s$ is locally on each side of $\Gamma_0$ for each $x_0\in\Gamma_0$.
 For all other boundary points, the domain is locally on one side of its boundary. 
 We assume the existence of a smooth level set function $\phi\in H^{4.5}(\R^3)$ such that $\Gamma_0\subset \{\phi=0\}$
  
\item We let $U_0$ denote an open neighborhood of $x_0$ in $ \mathbb{R}  ^3$, and then choose an additional $L$ open
sets $\{U_l\}_{l=1}^L$ such that the collection  $\{U_l\}_{l=0}^K$ is an open cover of $\bf\Gamma_s$, and $\{U_l\}_{l=0}^L$ is an open cover of $\bf\Omega_s$ and such that there exists a
sufficiently small open subset $ \omega \subset U_0$ containing $\Gamma_0$ with the property that 
$$\overline{\omega} \cap \overline{U_l} = \emptyset \ \text{ for all } \ l=1,...,L \,.$$
We set
\begin{align*} 
U_0^+ = U_0 \cap {\bf\Omega_s} \cap \{ \phi > 0 \} \ \text{ and } U_0^- = U_0 \cap {\bf \Omega_s} \cap \{ \phi < 0 \} \,.
\end{align*} 
Additionally, we assume that $\overline{U_0}\cap\overline{{\bf\Omega_s}}\cap\{\phi=0\}=\Gamma_0$, which implies in particular that $U_0^+$ and $U_0^-$ are connected.

\begin{figure}[htbp]
\begin{center}
\includegraphics[scale = 0.4]{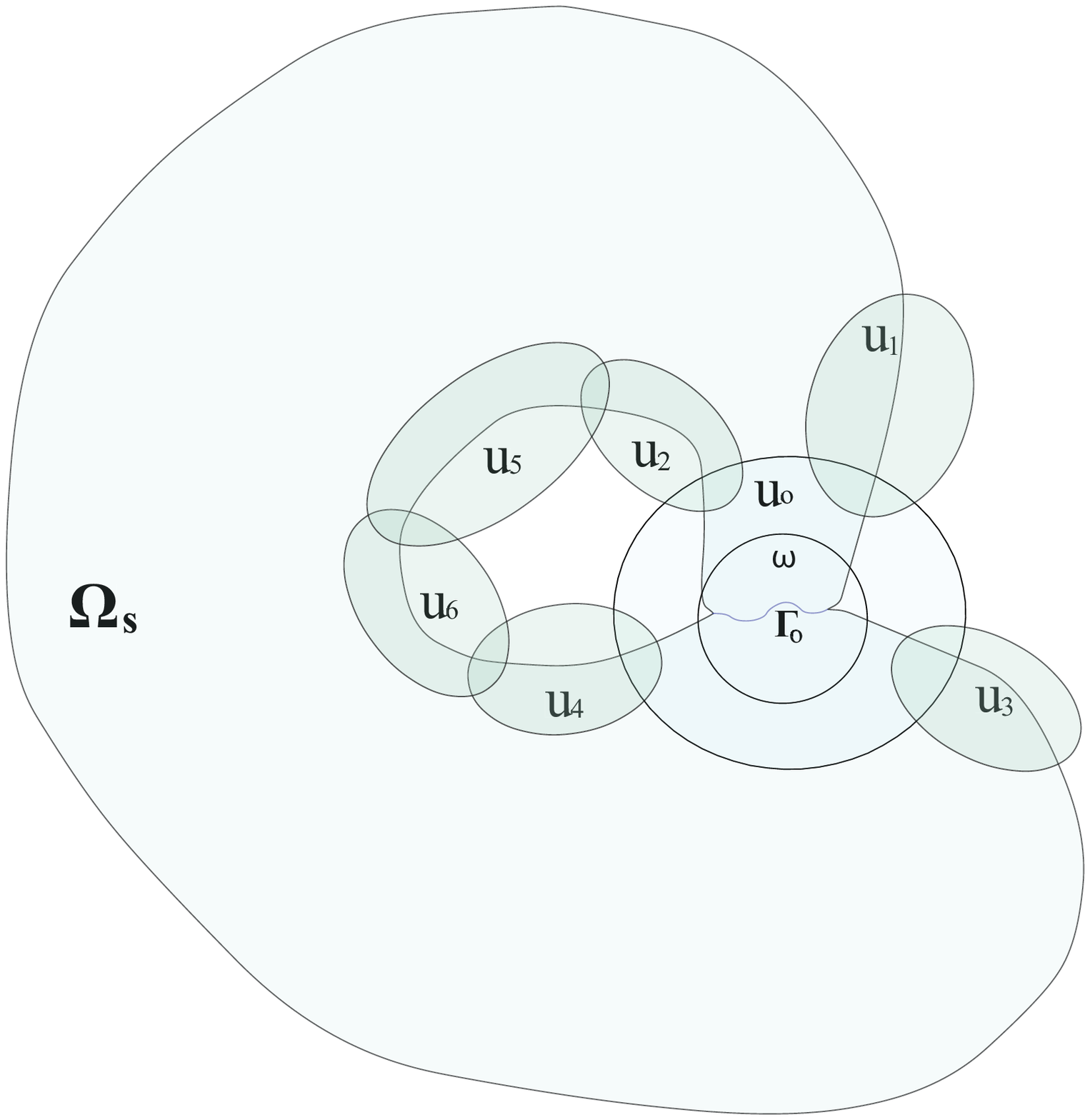}
\caption{Splat domain $\bf\Omega_s$, and the collection of open set $\{U_0,U_1,U_2,...,U_K\}$ covering $\bf \Gamma_s$.}
\end{center}
\end{figure}

\item We furthermore assume that our level set function is such that $\|\nabla\phi\|\ge C_0>0$ on $\overline{U_0}$.

\item For each $l\in \{1,...,K\}$, there exists an  $H^{4.5}$-class diffeomorphism $\theta_l$  satisfying
\begin{gather}
\theta_l : B:=B(0,1) \rightarrow U_l \nonumber \\
U_l \cap {\bf\Omega_s} = \theta_l ( B^+ )
\ \text{ and } \ \overline{U_l} \cap {\bf \Gamma_s} = \theta_l ( B^0 ) \,, 
\nonumber
\end{gather}
where 
\begin{align*} 
B^+ &=\{(x_1,x_2,x_3)\in B:  x_3>0\} \,, \\
B^0 &=\{(x_1,x_2,x_3)\in \overline B: x_3=0\}\,.
\end{align*}

\item For $L > K$, let $\{U_l\}_{l=K+1}^{L}$ denote a family of open sets 
contained in $\bf \Omega_s$ such that 
$\{U_l\}_{l=0}^{L}$ is an open cover of $\bf\Omega_s$, and for $l\in \{K+1,...,L\}$, $\theta_l : B \to U_l$ is an
$H^{4.5}$ diffeormorphism.

\item To the open set $U_0$ we associate two $H^{4.5}$-class diffeomorphisms $\theta_+$ and $\theta_-$ of $B$ onto $U_0$ with the following properties:

\begin{alignat*}{2}
\theta_+(B^+) &= U_0^+ \,,          \qquad \qquad           && \theta_-(B^+)= U_0^-   \,,   \\
\theta_+(B^0) & = \overline{U_0^+}\cap {\bf \Gamma_s}\,,       &&  \theta_-(B^0) = \overline{U_0^-}\cap {\bf \Gamma_s}\,,
\end{alignat*}
such that
\begin{equation}\nonumber
\Gamma_0=\theta_+(B^0)\cap\theta_-(B^0)\,,
\end{equation} 
and
\begin{equation}\nonumber
\theta_+=\theta_-\ \ \text{on}\ \overline\omega_0\subset B_0\,,
\end{equation}  
where $\omega_0$ is a smooth connected domain of $B_0$ in $\R^2$.

We further assume that 
$$ \overline{\theta_\pm(B^+\cap B(0,1/2))} \cap \overline{\theta_l(B^+)} = \emptyset \text{ for } l=1,...,K \,,$$
and
$$ \overline{\theta_\pm(B^+\cap B(0,1/2))} \cap \overline{\theta_l(B)} = \emptyset \text{ for } l=K+1,...,L \,.$$
\end{enumerate}

 \begin{definition}[Splat domain $\bf\Omega_s$]\label{def:splatdomain} 
 We say that $\bf\Omega_s$ is a splat domain, if it is defined by a collection of
 open covers $\{U_l\}_{l=0}^L$ and associated maps $\{\theta_\pm, \theta_1, \theta_2,...,\theta_L\}$ satisfying the
 properties (1)--(6) above.   Because each of the maps is an $H^{4.5}$ diffeomorphism, we say 
that the splat domain $\bf\Omega_s$ defines a self-intersecting generalized $\bf H^{4.5}$-domain.
 \end{definition} 

\subsection{A sequence of standard domains approximating the splat domain}
We  approximate the two distinguished charts $\theta_-$ and $\theta_+$ 
by charts $\theta_-^\epsilon$ and $\theta_+^\epsilon$ in such a way as to ensure that
$$
\theta_-^ \epsilon (B^0) \cap \theta_+^ \epsilon (B^0) = \emptyset \ \ \forall \ \epsilon >0\,,
$$
and which satisfy
$$
\theta_-^ \epsilon \to \theta_- \ \text{ and } \ \theta_+^ \epsilon \to \theta_+  \ \text{ as } \ \epsilon \to 0 \,.
$$

We let $ \psi \in \mathcal{D} (\omega)$ denote a smooth bump-function satisfying
 $0\le\psi \le 1$ and $\psi=1$ on $\Gamma_0$.   For 
$\epsilon>0$ taken small enough, we define the following diffeomorphisms
\begin{align*} 
\theta_-^\epsilon(x)&=\theta_-(x)-\epsilon\  \psi(\theta_-(x))\,  \nabla\phi(\theta_-(x))\,, \\
\theta_+^\epsilon(x)&=\theta_+(x)+\epsilon\ \psi(\theta_+(x))\,  \nabla\phi(\theta_+(x))\,,
\end{align*} 
  By 
choosing  $\psi \in \mathcal{D} (\omega)$, we ensure that the modification of the domain is localized to
a small neighborhood of $\Gamma_0$ and away from the boundary of $U_0$ and the image of the other maps $\theta_l$.
 Then, for $\epsilon>0$  sufficiently small,  thanks to item (3) in the definition of the splat domain,
 $$ \phi(\theta_-^\epsilon(x))\le \phi (\theta_-(x))-\frac{\epsilon}{2} \psi(\theta_-(x))\,  |\nabla\phi(\theta_-(x))|^2<0\,, $$
 $$ \phi(\theta_+^\epsilon(x))\ge \phi (\theta_+(x))+\frac{\epsilon}{2} \psi(\theta_+(x))\,  |\nabla\phi(\theta_+(x))|^2>0\,, $$
 which shows that 
 $$
 \theta_-^ \epsilon (\overline{B^+}) \cap  \theta_+^ \epsilon (\overline{B^+})  = \emptyset \,.
 $$
 Since the maps $\theta_\pm^ \epsilon $ are a modification of the maps $\theta_\pm$ in a very small neighborhood
 of $0 \in B$, we have that for $\epsilon>0$  sufficiently small,
 $$
  \theta_\pm^ \epsilon (B^+\cap B(0,1/2)) \cap \theta_l(B^+) = \emptyset \text{ for } l=1,...,K \,,
  $$
and
$$ \theta_\pm^ \epsilon (B^+\cap B(0,1/2)) \cap \theta_l(B) = \emptyset \text{ for } l=K+1,...,L \,.$$
 For $\l \in \{1,...,L\}$ we set $\theta_l^ \epsilon = \theta_l$.     Then $\theta_-^ \epsilon : B^+ \to U_0$, $\theta_+^ \epsilon : B^+ \to U_0$,
 and $\theta_l^ \epsilon : B^+ \to U_l$, $l \in \{1,...,K\}$,  $\theta_l^ \epsilon : B \to U_l$, $l \in \{K+1,...,L\}$, is a collection of $H^{4.5}$ coordinate charts as given in Section \ref{sec::charts},
 and so we have the following
 
 \begin{lemma} [The approximate domains $\bf\Omega^ \epsilon $]\label{approx_domain2} For each $ \epsilon >0$ sufficiently
 small,  the set $\bf\Omega^\epsilon$, defined by the local charts $\theta_-^\epsilon:B^+ \to U_0$,  $\theta_+^\epsilon:B^+ \to U_0$,
 and $\theta_l^ \epsilon :B^+ \to U_l$, $l \in \{1,...,K\}$, $\theta_l^ \epsilon :B \to U_l$, $l \in \{K+1,...,L\}$ (given in Definition
 \ref{def:splatdomain})
 is a domain of class $H^{4.5}$, which is locally on one side of its $H^4$ boundary.
 \end{lemma}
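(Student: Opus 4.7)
The plan is to mimic the proof of Lemma \ref{approx_domain} for the splash case, with the crucial new ingredient being the level-set function $\phi$ and the lower bound $|\nabla\phi|\ge C_0>0$ on $\overline{U_0}$ which plays the role that the vertical coordinate $x_3$ played in the splash construction. The main point is that $\nabla\phi$ is the correct replacement for the vector $\bf e_3$, because it is transverse to $\Gamma_0$ at every point, not just at a single point.

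First I would verify that for all $\epsilon>0$ sufficiently small, each of $\theta_\pm^\epsilon$ is an $H^{4.5}$ diffeomorphism from $B$ onto $U_0$. Writing $\theta_\pm^\epsilon=\theta_\pm\pm\epsilon F_\pm$ with $F_\pm(x)=\psi(\theta_\pm(x))\,\nabla\phi(\theta_\pm(x))$, the map $F_\pm$ is of class $H^{4.5}$ by composition (the cut-off $\psi\in\mathcal{D}(\omega)$ is smooth and $\phi$ is of sufficient regularity on $\omega$, so $\nabla\phi\circ\theta_\pm$ lies in a suitable $H^{4.5}$ class on the relevant compact sets). Since $\|\epsilon F_\pm\|_{4.5,B}\to 0$ as $\epsilon\to 0$, the Jacobian of $\theta_\pm^\epsilon$ remains uniformly bounded away from zero, and the inverse function theorem combined with the local injectivity of $\theta_\pm$ gives that $\theta_\pm^\epsilon$ is a global $H^{4.5}$ diffeomorphism onto its image (which is $U_0$ modulo an $\epsilon$-perturbation, i.e.\ we may slightly shrink/enlarge the target without changing the conclusion).

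Second, I would use the lower bound $|\nabla\phi|\ge C_0>0$ to establish disjointness of $\theta_-^\epsilon(\overline{B^+})$ and $\theta_+^\epsilon(\overline{B^+})$. The computation already carried out in the excerpt,
\begin{equation*}
\phi(\theta_-^\epsilon(x))\le\phi(\theta_-(x))-\tfrac{\epsilon}{2}\psi(\theta_-(x))|\nabla\phi(\theta_-(x))|^2<0
\end{equation*}
on the set $\{\psi\circ\theta_-\ne 0\}$, and the analogous inequality for $\theta_+^\epsilon$, shows that on the portion where $\psi$ is active the two images lie in strictly opposite sides of $\{\phi=0\}$. On the remaining part of $B^+$, $\psi\circ\theta_\pm$ vanishes and so $\theta_\pm^\epsilon=\theta_\pm$; there one argues exactly as in the splash case, using the fact that $\Gamma_0$ is the only common boundary set and that it lies inside $\omega\subset\{\psi=1\}$, hence a positive $\phi$-distance separates the unperturbed tails of $\theta_-(B^+)$ and $\theta_+(B^+)$. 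Combining the two cases and possibly shrinking $\epsilon$ yields $\theta_-^\epsilon(\overline{B^+})\cap\theta_+^\epsilon(\overline{B^+})=\emptyset$, and also the non-intersection with the images of the other charts (already stated in the excerpt).

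Finally, I would assemble the charts $\{\theta_-^\epsilon,\theta_+^\epsilon,\theta_l^\epsilon=\theta_l\}_{l=1}^L$ as in Section \ref{sec::charts} to realize $\bf\Omega^\epsilon$ as an $H^{4.5}$ domain in the sense of Definition \ref{def1}, with $H^4$ boundary given by the union of the $\theta_l^\epsilon(B^0)$ for $-1\le l\le K$. Since these images are pairwise disjoint up to overlap on proper subsets of boundary patches (by the separation proved above), $\bf\Omega^\epsilon$ is locally on one side of its boundary. The main obstacle is Step 2: one must use precisely the transversality condition $|\nabla\phi|\ge C_0$ to guarantee that the perturbation $\pm\epsilon\psi\nabla\phi$ moves points \emph{across} the surface $\{\phi=0\}$ on the whole of $\Gamma_0$ (not just at isolated points, as the splash case allowed). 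Once this transversality is exploited as in the displayed inequality above, the remaining steps run in exact parallel with the splash proof.
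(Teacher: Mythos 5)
Your proposal is correct and follows essentially the same route as the paper, whose justification of this lemma is precisely the displayed computation preceding it: the transversality bound $|\nabla\phi|\ge C_0>0$ on $\overline{U_0}$ (item (3) of the splat definition) forces $\theta_-^\epsilon(\overline{B^+})\subset\{\phi<0\}$ and $\theta_+^\epsilon(\overline{B^+})\subset\{\phi>0\}$, whence disjointness, while the localized support of $\psi$ leaves the remaining charts untouched so that the collection is an $H^{4.5}$ chart system as in Section~\ref{sec::charts}. (The only slip is the inclusion ``$\omega\subset\{\psi=1\}$,'' which should read $\Gamma_0\subset\{\psi=1\}$ with $\operatorname{spt}\psi\subset\omega$; this does not affect the argument.)
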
 

 Just as for the splash domain,  we have approximated the self-intersecting {\it splat domain} $\bf\Omega_s$ with a sequence of $H^{4.5}$-class standard
 domains $\bf\Omega^ \epsilon $ locally on one side of its boundary for each $ \epsilon >0$.   
  Also, just as for the splash domain, our approximate domains 
$\bf\Omega^\epsilon$ differ from our splat domain $\bf\Omega_s$ only on the two patches $\theta^\epsilon_-(B^+)$ and $\theta_+^\epsilon(B^+)$.    In particular,  as $ \theta_ \pm $ differ from $\theta_\pm^ \epsilon $
 on a set properly contained in $\omega \subset U_0$,  we continue to use the same covering $\{U_l\}_{l=0}^L$ for $\bf\Omega^ \epsilon $
 as for $\bf\Omega_s$.

\section{Construction of the splat velocity field $u_s$ at the time of the  splat singularity}\label{sec10}

We can now define the {\it splat velocity}  $\bf u_s$ associated with the generalized $\bf H^{4.5}$-class splat domain 
$\bf\Omega_s$, as well as a
sequence of approximations $\bf u_s^ \epsilon $ set on our $H^{4.5}$-class approximations $\bf \Omega^\epsilon$ of the splat domain
$\bf \Omega_s$.

\subsection{The splat velocity $\bf u_s$}
\begin{definition}[Splat velocity $\bf u_s$]  \label{def::usplat}
A velocity field $\bf u_s$ on an $\bf H^{4.5}$-class splat domain $\bf \Omega_s$ is called a {\it splat velocity} if it satisfies the following
properties:
\begin{enumerate}
\item $ \zeta  {\bf u_s} \circ \theta _\pm \in  H^{4.5}(B^+)$, $\zeta {\bf u_s}\circ\theta_l\in H^{4.5}(B^+)$ for 
each  $1\le l\le K$ and ${\bf u_s} \in H^{4.5}(\omega)$ for each $\overline\omega\subset\bf\Omega_s$;
\item ${\bf u_s}\cdot\nabla\phi \circ \theta_-|_{\theta_-(B^+)} >C_-$ and $-{\bf u_s}\cdot\nabla\phi\circ \theta _+|_{\theta_+(B^+)}>C_+$ with $C_-+C_+>0$, so that under the motion of the fluid, the 
sets $U_0^+$ and $U_0^-$ are moving relatively towards each other. 
\end{enumerate}
\end{definition}

We can then define the approximate splat velocity fields $\bf u_s^ \epsilon : {\bf \Omega_s^ \epsilon} \to \mathbb{R}^3  $ in
the same way as we did for the case of the splash velocity. 
The results of Sections \ref{sec7} and \ref{sec8} can then proceed in the same fashion as for the splash case, 
leading to Theorem \ref{theorem2}. 

We note only  that the inequality (\ref{cs2}) must replaced with 
\begin{equation}
-\phi(\eta^\epsilon(x,t))+\phi(\eta^\epsilon(y,t))\ge (C_++C_-)\ |t|-t^2 P(\sup_{[0,t]} E^ \epsilon )\,,   \label{cs1001}
\end{equation}
 for $x$, $y$ as in (\ref{cs2}).   The estimate (\ref{cs1001})
together with
\begin{equation*}
|\phi(\eta^\epsilon(x,t))-\phi(\eta^\epsilon(y,t))|\le |\nabla\phi | \ |\eta^\epsilon(x,t)-\eta^\epsilon(y,t)|\,,
\end{equation*}
and item (3) of the definition of our splat domain ${\bf \Omega_s}$ then provides
\begin{equation}
\label{csplat2}
|\eta^\epsilon(x,t)-\eta^\epsilon(y,t)|\ge \frac{(C_++C_-)}{C_0}\ |t|-t^2 P_1(\sup_{[0,t]} E^ \epsilon )\,.
\end{equation}
This relation is the analogous of (\ref{cs2}) obtained for the approximated splash domain. Since our splat domain is also bounded, we can derive in the same way as for the splash domain a relation similar to (\ref{cs6}) for our approximated splat domain, which shows that $\eta^\epsilon$ is also injective for $\epsilon>0$ small enough.
In turn, this allows us to establish $ \epsilon $-independent estimates and arrive to the analogous conclusions as  those obtained in
Sections \ref{sec7} and \ref{sec8}.

\appendix\label{appendix}
\section{A priori estimates for the free-surface Euler equations}

In this appendix, we establish a priori estimates for the free-surface Euler equations with reference (or initial) domain
$\Omega$ which is a standard $H^{4.5}$-class domain, open, bounded, and locally on one side of its boundary.   

\subsection{Properties of the cofactor matrix $a$,   and a polynomial-type inequality}

\subsubsection{Geometry of the moving surface $\Gamma(t)$} \label{sec:geometry-moving}  With respect to local coordinate charts, 
the vectors $\eta\cp{\alpha}$ for $\alpha=1,2$ span the tangent space to the moving surface $\Gamma(t)=\eta(\Gamma)$ in $\R^3$.
The (\textit{induced}) \textit{surface metric} $g$ on $\Gamma(t)$ has components 
$g_{\alpha\beta}=\eta\cp{\alpha}\cdot\eta\cp{\beta}$.
We let $\label{n:g0}g_0$ denote the surface metric of the initial surface $\Gamma$.
The components of the inverse metric $\label{n:sqrt g}\bset{g}^{-1}$ are denoted by $\bset{g}^{\alpha\beta}$. We use $\sqrt{g}$ to denote $\sqrt{\det g}$; we note that $\sqrt{g}=\abs{\eta\cp{1}\times\eta\cp{2}}$, so that 
\label{n:n}$n(\eta)=\bset{\eta\cp{1}\times\eta\cp{2}}/\sqrt{g}$. 

\subsubsection{Differentiating the inverse matrix $A$}  Using  that $ D\eta \, A = \text{Id}$,
we have the following identities
\begin{align}
{\bar\partial} A^k_i &= - A^s_i {\bar\partial} \eta^r,_s   A^k_r \,, \label{a1} \\
DA^k_i &= - A^s_i D\eta^r,_s   A^k_r \,, \label{a2a} \\
\p_t A^k_i &= - A^s_i v^r,_s   A^k_r \,. \label{a2b}
\end{align}

\subsubsection{Relating the cofactor matrix and the unit normal $n(t)$}  With $N$ denoting the outward unit normal to $\Gamma$,
we have the identity
\begin{align}
n_i( \eta)  &= a^k_i N_k / | a^T N| \,. \nonumber
\end{align}
so that
\begin{equation}\label{n1}
A^k_i N_k =   J ^{-1} \sqrt{g} n_i(\eta) \text{ on } \Gamma \,.
\end{equation}


\subsubsection{A polynomial-type inequality} For a constant $M_0\ge 0$,  suppose that $f(t)\ge 0$,
$t \mapsto f(t)$ is continuous,  and
\begin{equation}\label{f}
f(t) \le M_0 + t\, P(f(t))\,,
\end{equation}
where $P$ denotes a polynomial function.
Then for $t$ taken sufficiently small, we have the bound
$$
f(t) \le 2M_0\,.
$$
We use this type of inequality (see \cite{CoSh2007}) in place of nonlinear Gronwall-type of inequalities.

\subsection{Trace and elliptic estimates for vector fields}
The normal trace theorem  states that the existence of the
normal trace $w \cdot N |_{\Gamma}$ of a velocity field $w\in L^2(\Omega)$ relies on the
regularity of ${\operatorname{div}}  w \in L^2(\Omega) $ (see, for example, \cite{Temam1984}).  If ${\operatorname{div}}  w\in
L^2(\Omega) $, then $w\cdot N$ exists in
$H^{-0.5}(\Gamma)$.  We will use the following variant: 
\begin{align}
|\bar \p w\cdot N|^2_{-0.5, \Gamma} \le C \Big[\|\bar \p w\|^2_{0, \Omega }
+ \|{\operatorname{div}}   w\|^2_{ 0, \Omega ) }\Big] \label{normaltrace}
\end{align}
for some constant $C$ independent of $w$.

The construction of our higher-order energy function is based on the following Hodge-type elliptic estimate:
\begin{proposition}\label{prop1}
For an $H^r$ domain $\Omega$ with $\Gamma = \p \Omega$, $r \ge 3$,
if $F \in L^2(\Omega;{\mathbb R} ^3)$ with $\operatorname{curl}F \in H^{s-1}(\Omega;{\mathbb R} ^3)$,
${\operatorname{div}}F\in H^{s-1}(\Omega)$, and $\bar \p F \cdot N|_{\Gamma} \in
H^{s -{\frac{3}{2}}}(\Gamma)$ for $1 \le s \le r$, then there exists a
constant $\bar C>0$ depending only on $\Omega$ such that
\begin{equation}
\begin{array}{c}
\|F\|_{s, \Omega } \le \bar C\left( \|F\|_{0,\Omega } + \|\operatorname{curl} F\|_{s-1, \Omega }
+ \|\operatorname{div} F\|_{s-1,\Omega } + |\bar \p F \cdot N|_{s-{\frac{3}{2}}, \Gamma }\right)\,,  
\end{array}
\label{hodge}
\end{equation}
where $N$ denotes the outward unit-normal to $\Gamma$.
\end{proposition}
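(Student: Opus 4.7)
My strategy is to use a partition of unity subordinate to the cover $\{U_l\}_{l=1}^L$ of $\Omega$, thereby reducing the global estimate to local estimates on interior charts (where interior elliptic regularity for $F$ with data $\curl F$ and $\div F$ suffices) and boundary charts (where the hypothesis on $\bar{\p} F\cdot N$ enters through a boundary integral). The entire argument rests on the pointwise Hodge identity
$$|DF|^2 = |\curl F|^2 + F^i,_j F^j,_i,$$
which follows from $\varepsilon_{kij}\varepsilon_{klm} = \delta_{il}\delta_{jm} - \delta_{im}\delta_{jl}$, together with the rewrite $F^i,_j F^j,_i = (F^i F^j,_i - F^j \div F),_j + (\div F)^2$.

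\smallskip

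First I would establish the base case $s=1$. Integrating the Hodge identity over $\Omega$ and applying the divergence theorem gives
$$\int_\Omega |DF|^2 = \int_\Omega \bigl(|\curl F|^2 + |\div F|^2\bigr) + \int_\Gamma \bigl(F^i F^j,_i - F^j \div F\bigr) N_j\, dS.$$
To control the boundary integrand, I would split the restriction $DF|_\Gamma$ into tangential and normal parts and note that the normal derivative of $F$ along $\Gamma$ is algebraically determined by $\div F$, $N\times\curl F$, and tangential derivatives of $F$. Pairing $F|_\Gamma \in H^{0.5}(\Gamma)$ (via the trace theorem) by duality with the $H^{-0.5}(\Gamma)$ piece that carries $\bar{\p} F\cdot N$, and absorbing the resulting $\|F\|_{1,\Omega}$-contribution using Young's inequality, yields the desired $s=1$ bound.

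\smallskip

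For $s\ge 2$ I would proceed by induction. On a boundary chart $U_l$, composition with $\theta_l$ straightens the boundary so that the pulled-back field $\tilde F = F\circ \theta_l$ is defined on $B^+$ with flat boundary $B^0$. Applying a tangential derivative $\bar{\p}_\alpha$ to $\tilde F$ yields a vector field whose flat divergence and curl agree with $\bar{\p}_\alpha$ applied to $\div F \circ\theta_l$ and $\curl F\circ\theta_l$ up to commutator terms involving $\theta_l$, which are bounded in $H^{s-1}(B^+)$ thanks to $\theta_l \in H^{4.5}$. The inductive hypothesis at order $s-1$, applied to $\bar{\p}_\alpha \tilde F$ on the half-ball, then controls all tangential derivatives of $\tilde F$ through order $s$; the remaining normal derivative $\p_3 \tilde F$ is recovered algebraically from the flat divergence, the horizontal components of the flat curl, and $\bar{\p} \tilde F$. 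Summing the local bounds with partition-of-unity cutoffs and adding the interior estimate obtained from $-\Delta F = \curl\curl F - D\div F$ produces the claimed global $H^s$-bound.

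\smallskip

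The main obstacle is the careful management of commutators between $\bar{\p}_\alpha$ and the chart $\theta_l$ when $s$ is close to $r$: the worst commutator term couples $\|\theta_l\|_{r,B^+}$ with the top-order norm $\|F\|_{s,\Omega}$ and must be handled by Sobolev interpolation and absorbed using the small-support cutoff from the partition of unity. A secondary delicate point is the top-order boundary term in the base case, which is the only place where the hypothesis $\bar{\p} F\cdot N \in H^{s-3/2}(\Gamma)$ is genuinely used; one must verify that the normal-trace piece of $DF|_\Gamma$ reconstructs entirely from $\div F$, $\curl F$, and $\bar{\p} F$ without requiring independent regularity of $F$ in the normal direction, and in doing so use the normal trace inequality (\ref{normaltrace}) to relate $|\bar{\p} F \cdot N|_{-0.5,\Gamma}$ to interior norms of $\bar{\p} F$ and $\div F$.
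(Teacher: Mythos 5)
Your plan is correct and is essentially the standard div--curl argument that the paper is alluding to: the paper offers no proof beyond the remark that the estimate follows from the identity $-\Delta F=\operatorname{curl}\operatorname{curl}F-D\operatorname{div}F$, and your integrated Hodge identity, tangential induction on boundary charts, and algebraic recovery of the normal derivatives from $\operatorname{div}$, $\operatorname{curl}$, and $\bar\partial$ are the usual way to flesh that remark out. One caution for the base case $s=1$: there $\operatorname{div}F$ and $\operatorname{curl}F$ lie only in $L^{2}(\Omega)$ and hence have no boundary traces, so instead of reconstructing $\partial_{N}F|_{\Gamma}$ from them you should observe that the combined boundary integrand $\left(F^{i}F^{j},_{i}-F^{j}\operatorname{div}F\right)N_{j}$ involves only \emph{tangential} derivatives of $F$ (the normal--normal derivative cancels between the two terms), after which the $H^{1/2}(\Gamma)\times H^{-1/2}(\Gamma)$ duality pairing you describe closes the estimate.
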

This  well-known inequality  follows from the identity $-\Delta F= {\operatorname{curl}}\,
{\operatorname{curl}}F - D {\operatorname{div}}F$.

\subsection{The higher-order energy function $E(t)$}
\begin{definition} \label{defn_E} We set on $[0,T ]$
\begin{align}
E (t)  = 1+ \|  \eta(t)\|^2_{4.5,\Omega } + \|  v(t)\|^2_{4 , \Omega } + \| \operatorname{curl}_{\eta}  v(t)\|^2_{3.5 , \Omega } + \| v_t(t)\|_{3.5, \Omega }^2
 \,. \label{Energy}
\end{align}
\end{definition}
The function $E(t)$ is the {\it higher-order energy function} which we will prove remains bounded on $[0,T]$.

\begin{definition} We set
 the constant $M_0$ to be a particular polynomial function $P$ of $E(0)$
  so that $M_0 = P ( E (0) ) $.
\end{definition}

\subsubsection{Conventions about constants}\label{subsec_assumptions}  

We take $T  >0$ sufficiently small so that, using the fundamental theorem of calculus, for  constants $c_1,c_2$ and
$t \in [0,T ]$,
\begin{align*}
c_1 \det g (0) &\le \det  g (t)  \le c_2\det   g (0) \text{ on } \Gamma  \,, \\
\| \eta(t)\|_4 & \le\| e\|_4 +1 \,,  \ \
\| q(t)\|_{4}\le \|  q(0) \|_{4} +1 \,,  \\
\| v(t)\|_{3.5} &  \le \| u_0 \|_{3.5} +1 \,,  \  \| v_t(t)\|_{3}   \le \| v_t(0) \|_{3} +1  \,.
\end{align*}
 The right-hand sides appearing in the last three
  inequalities shall be denoted by a
generic constant C in the estimates that we will perform.  The norms are over $\Omega$.

\subsection{Curl and divergence estimates for $\eta$, $v$, and $v_t$}\label{subsec_curlestimates}

\begin{proposition} \label{curl_est}
 For all $t \in (0,T )$,  
\begin{align}
 \|{\operatorname{curl}}\,   \eta(t)\|_{3.5, \Omega }^2
+\|{\operatorname{curl}_{\eta}}\,  v(t)\|_{3.5, \Omega }^2  
\le M_0 +  T\, P({\sup_{t\in[0,T]}} E (t))\,.
\label{curl_estimate}
\end{align}
\end{proposition}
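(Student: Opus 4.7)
The plan is to exploit the Lagrangian Cauchy structure: applying the Lagrangian curl $\operatorname{curl}_\eta$ to the momentum equation~\eqref{lageuler.a} annihilates the pressure term. Indeed, $A^T Dq = (Dp)\circ\eta$, so $\operatorname{curl}_\eta(A^T Dq) = (\operatorname{curl} Dp)\circ\eta = 0$, and therefore $\operatorname{curl}_\eta v_t = 0$. I would then compute the time derivative of $\operatorname{curl}_\eta v$ by commuting $\p_t$ past the Lagrangian curl using \eqref{a2b}, obtaining
\[
\p_t(\operatorname{curl}_\eta v)^i \;=\; \varepsilon_{ijk}(\p_t A^s_j)\, v^k,_s \;=\; -\,\varepsilon_{ijk}\, A^r_j A^s_p\, v^p,_r\, v^k,_s,
\]
which is a purely quadratic nonlinearity in $A$ and $Dv$. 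Integrating on $[0,t]$ yields the Cauchy-type identity
\[
\operatorname{curl}_\eta v(t) \;=\; \operatorname{curl} u_0 \;-\; \int_0^t \varepsilon_{ijk}\, A^r_j A^s_p\, v^p,_r\, v^k,_s \,ds.
\]

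Taking the $H^{3.5}(\Omega)$ norm of this identity, the initial term contributes $\|\operatorname{curl} u_0\|_{3.5}^2 \le M_0$, while the time integral contributes a factor of $T$ multiplying a polynomial in $\sup_{[0,T]} E$, since each factor $A$ is controlled in $H^{3.5}$ by $\|\eta\|_{4.5}\le \sqrt{E}$ (via the standard identity \eqref{a2a}) and each $Dv$ is controlled in $H^3$ by $\|v\|_4\le\sqrt{E}$. This gives the desired bound for $\|\operatorname{curl}_\eta v(t)\|_{3.5}^2$. For the bound on $\operatorname{curl}\eta(t)$, I would use \eqref{lageuler.a0} to write $\operatorname{curl}\eta(t) = \int_0^t \operatorname{curl} v\,ds$, and compare the Eulerian and Lagrangian curls via
\[
\operatorname{curl} v - \operatorname{curl}_\eta v \;=\; \varepsilon_{ijk}(\delta^s_j - A^s_j)\, v^k,_s.
\]
Since $A(0)=I$, the difference $A-I = \int_0^t \p_t A\,ds$ carries an additional factor of $T$ in its $H^{3.5}$ norm, so that $\|\operatorname{curl} v\|_{3.5}$ is bounded by $\|\operatorname{curl}_\eta v\|_{3.5}$ plus $T\cdot P(\sup E)$, and one further time integration gives $\|\operatorname{curl}\eta(t)\|_{3.5}^2 \le T^2 P(\sup E)$, which is absorbed into $T\,P(\sup E)$ for $T\le 1$.

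The main obstacle is the sharp product estimate for $\|A^2 (Dv)^2\|_{3.5}$: since $Dv$ a priori lies only in $H^3$, a naive algebra-property estimate does not yield $H^{3.5}$ control, so one must invoke a Moser-type product inequality taking full advantage of the extra regularity of $A$ (which lives in $H^{3.5}$ because $\eta\in H^{4.5}$) and of the tangential structure on boundary patches. A parallel delicacy arises in estimating $\|(I-A)Dv\|_{3.5}$ for the $\operatorname{curl}\eta$ bound, but there the smallness $\|A-I\|_{3.5}\le T\cdot P(\sup E)$ compensates for the half-derivative loss on $Dv$. Once these product estimates are in place, assembling the bounds and using the polynomial-type inequality \eqref{f} closes the argument.
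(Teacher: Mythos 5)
Your derivation of the Cauchy-type identity $\operatorname{curl}_\eta v(t)=\operatorname{curl}u_0+\int_0^t B(A,Dv)\,dt'$ with $B\sim A\,A\,Dv\,Dv$ is exactly the paper's starting point, but the way you propose to close the estimate at top order does not work, and you have correctly located --- without resolving --- the fatal point. There is no Moser-type product inequality of the form $\|fg\|_{3.5}\le C\|f\|_{3.5}\|g\|_{3}$ in three dimensions: taking $f\equiv 1$ and $g\in H^3\setminus H^{3.5}$ already defeats it, and the quadratic term $(Dv)^2$ involves two copies of a function controlled only in $H^3$, so the ``extra regularity of $A$'' is irrelevant to the loss. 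The same objection undermines your treatment of $\operatorname{curl}\eta$: smallness of $\|A-\mathrm{Id}\|_{3.5}$ cannot compensate for the fact that $(A-\mathrm{Id})Dv$ need not belong to $H^{3.5}$ at all when $Dv\in H^3$ only; a small prefactor multiplying a quantity that may be infinite does not produce a finite bound.

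The mechanism the paper actually uses is an integration by parts in time, which is the idea missing from your proposal. After differentiating the Cauchy identity in space, the dangerous contribution is $\int_0^t D^2v\,Dv\,A\,A\,dt'$; writing $D^2v=\p_{t'}(D^2\eta)$ and integrating by parts in $t'$ converts it into $\int_0^t D^2\eta\,(Dv\,A\,A)_{t'}\,dt'$ plus boundary terms of the schematic form $D^2\eta\,Dv\,A\,A$. Every factor now lies in $H^{2.5}(\Omega)$ --- in particular $Dv_t\in H^{2.5}$ precisely because $\|v_t\|_{3.5}^2$ is included in the energy \eqref{Energy} --- and $H^{2.5}(\Omega)$ is a multiplicative algebra, so the product closes at the level of $\|D\operatorname{curl}_\eta v\|_{2.5}$ and hence of $\|\operatorname{curl}_\eta v\|_{3.5}$; the bound for $\operatorname{curl}\eta$ follows the same scheme after one further time integration. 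The structural insight you need is that the half spatial derivative you cannot afford on $v$ must be traded for a time derivative, which is affordable because $v_t$ carries $3.5$ derivatives; no purely spatial product estimate can substitute for this.
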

\begin{proof} By taking the curl of (\ref{lageuler.a}), we have that
$$
\operatorname{curl} _{ \eta } v_t =0 \,.
$$
It follows that $\p_t( \operatorname{curl}_{\eta }v) = B(A ,D v)$, where the $k$th-component of $B$ is given by
$$
[B(A ,D v)]_k = \varepsilon_{kji}{ A_t}^s_j v^i,_s  = \varepsilon_{kij} v^i,_s A^s_p\,  v ^p,_l
A^l_j \,;
$$
 hence,
\begin{equation}\label{curlv_3d}
\operatorname{curl}_{\eta }v(t) = \operatorname{curl} u_0  + \int_0^t B(A (t'),Dv(t')) dt'\,.
\end{equation}

\noindent
{\bf Step 1. Estimate for $ \operatorname{curl}   \eta$.}
Computing the gradient of (\ref{curlv_3d}) yields
\begin{equation}\label{cs100}
\operatorname{curl} _{\eta }  Dv(t) = D \operatorname{curl} u_0  - \varepsilon_{ \cdot ji}DA^s_j v^i,_s + \int_0^t DB(A (t'),Dv(t')) dt'\,.
\end{equation} 
(In components, $[\operatorname{curl} _{\eta } \p_{x_l} v]_i = \varepsilon_{ijk} v^k,_{lr} A^r_j$.)
Applying the fundamental theorem of calculus once again, shows that
\begin{align}
\operatorname{curl} _{ \eta } D \eta(t) &= t D \operatorname{curl} u_0  +  \varepsilon_{ \cdot ji}\int_0^t [{A_t}^s_j D\eta^i,_s -DA^s_j v^i,_s] dt' \nonumber\\
& \qquad\qquad + \int_0^t  \int_0^{t'} D B(A (t''),Dv(t'')) dt'' dt' \,, \label{ccss1000}
\end{align}
and finally that
\begin{align}
D \operatorname{curl} \eta(t) &= t D\operatorname{curl} u_0  - \varepsilon_{ \cdot ji}\int_0^t {A_t}^s_j(t') dt' \,  D\eta^i,_s \label{ssscurl1}  \\
&\hspace{-1cm} +  \varepsilon_{ \cdot ji}\int_0^t [{A_t}^s_j D\eta^i,_s -DA^s_j v^i,_s] dt'
+ \int_0^t  \int_0^{t'} DB(A (t''),Dv(t'')) dt'' dt'  \,. \nonumber
\end{align}
Using the fact that $ \p_t A^s_j = - A^s_lv ^l,_p A^p_j$ and $ D A^s_j = - A^s_l D\eta ^l,_p A^p_j$, we see that
%
%
%
\begin{align}
DB(A, Dv) = - \varepsilon_{kji} [Dv^i,_s A^s_l v ^l,_p A^p_j &+ v^i,_s A^s_l Dv ^l,_p A^p_j\nonumber\\
&+ v^i,_s v ^l,_pD( A^s_l  A^p_j)]\,.\label{ccss1010}
\end{align}
The precise structure of the right-hand side  is not very important; rather,
the derivative count is the focus, and as such we write
\begin{align*}
DB( A,  Dv) \sim   D^2 v  \, Dv\,  A \, A
+ D^2 \eta\,  Dv \,  Dv \, A \, A \,.
\end{align*}

Integrating by parts in time in the last term of the right-hand side of (\ref{ssscurl1}), we see that
\begin{align}
\!\int_0^t\!\!\int_0^{t'} DB( A , Dv) \,dt'' dt'    & \sim {-} \int_0^t\!\!\int_0^{t'}
 D^2 \eta  \, (Dv \,  A \, A)_t  
 dt'' dt'   
  +  \int_0^t\!\!\int_0^{t'}  D^2 \eta\,  Dv \,  Dv \, A \, A dt'' dt'  \nonumber \\
 &\qquad + \int_0^t
  D^2 \eta  \, Dv\,  A \, A  dt'  \,. \label{ccss1011}
\end{align}
Thus, we can write
\begin{align*}
D \operatorname{curl} \eta(t) & \sim t D\operatorname{curl} u_0  + {D^2 \eta  \int_0^t D v \, A \, A dt'}
+ \int_0^t D^2 \eta Dv A A dt' \\
& \ \  \int_0^t\int_0^{t'}  D^2 \eta\,  Dv \,  Dv \, A \, A dt'' dt'   + \int_0^t\int_0^{t'}   D^2 \eta  \, (Dv\,  A \, A)_ t dt'' dt'   \,.
\end{align*}

Our goal is to estimate $\| D \operatorname{curl}  \eta\|_{2.5, \Omega }^2$.   Thanks to the Sobolev embedding theorem, we have that
$$
\| D \operatorname{curl}  \eta\|_{2.5,\Omega }^2 \le M_0 +   T\, P(\sup_{t \in [0,T]} E (t)) \,,
$$
and  hence with $ \operatorname{curl} _{ \eta } v_t =0$, that
$$
\| \operatorname{curl}  \eta\|_{3.5, \Omega }^2 \le M_0 +   T\, P(\sup_{t \in [0,T ]} E (t)) \,.
$$

\noindent
{\bf Step 2. Estimate for $ \operatorname{curl}_{\eta}   v$.}   Integrating-by-parts with respect to $\p_t$ in the time integral in
equation (\ref{cs100}), we see that
the highest order term in $ \operatorname{curl} _\eta Dv$ is given by $\int_0^t D^2\eta \, Dv_t\, A\, A dt'$.  
As $H^{2.5}(\Omega)$ is a multiplicative algebra, it follows that on $[0, T]$,
$$
\| \operatorname{curl}_{\eta} v(t) \|_{3.5, \Omega }^2 \le M_0  +   T\, P(\sup_{t \in [0,T]} E (t)) \,.
$$
\end{proof}

\begin{proposition} \label{div_est}
 For all $t \in (0,T )$, 
\begin{align}
 \|{\operatorname{div}}\,   \eta(t)\|_{3.5, \Omega }^2
+\|{\operatorname{div}}\,  v(t)\|_{3, \Omega }^2  \le M_0 +  T\, P({\sup_{t\in[0,T]}} E (t))\,.
\label{div_estimate}
\end{align}
\end{proposition}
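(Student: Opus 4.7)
The strategy parallels the curl estimate of Proposition \ref{curl_est}, but now exploits the \emph{exact} incompressibility constraint in two different algebraic ways: once as $\operatorname{div}_\eta v = 0$ (to handle $\operatorname{div} v$) and once as $\det D\eta = 1$ (to handle $\operatorname{div}\eta$). The common theme is to express $\operatorname{div} v$ and $\operatorname{div}\eta - 3$ as quadratic (or higher) expressions in the ``smallness'' quantities $A - I$ and $D\eta - I$, both of which are pointwise-in-time bounded by $tP(\sup E)$ in low Sobolev norms.

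For $\operatorname{div} v$, I would write $\operatorname{div} v = v^r\cp{r} = (\delta^s_r - A^s_r)v^r\cp{s} = -(A-I)^s_r v^r\cp{s}$, using $A^s_r v^r\cp{s} = \operatorname{div}_\eta v = 0$. Since $A(0) = I$ and $\partial_t A = -A\, Dv\, A$ (identity (\ref{a2b})), the fundamental theorem of calculus gives
\begin{equation*}
  A(t) - I = -\int_0^t A(t')\, Dv(t')\, A(t')\, dt'.
\end{equation*}
Because $H^3(\Omega)$ is a multiplicative algebra (we are in 3-D, and $3 > 3/2$), and because $\eta\in H^{4.5}$ gives $A\in H^{3.5}\subset H^3$ with $\|A\|_3 \le P(\sqrt{E})$ while $\|Dv\|_3 \le \sqrt{E}$, one obtains $\|A - I\|_{3,\Omega}\le T\, P(\sup E)$. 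The algebra estimate then yields
$$\|\operatorname{div} v\|_{3,\Omega} \le C\,\|A-I\|_{3,\Omega}\,\|Dv\|_{3,\Omega} \le T\, P(\sup E),$$
which is the desired bound on the second term (with $M_0$ absorbing the contribution at $t=0$).

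For $\operatorname{div}\eta$, the incompressibility constraint $J = \det D\eta \equiv 1$ gives the exact algebraic identity
\begin{equation*}
0 \;=\; \det(D\eta) - 1 \;=\; (\operatorname{div}\eta - 3) \;+\; Q_2(D\eta - I) \;+\; \det(D\eta - I),
\end{equation*}
where $Q_2$ is the explicit quadratic polynomial in the entries of $D\eta - I$ coming from expanding the determinant. Hence
$\operatorname{div}\eta - 3 = -Q_2(D\eta - I) - \det(D\eta - I)$, so it suffices to bound quadratic and cubic products of $D\eta - I$ in $H^{3.5}(\Omega)$. I would apply the Kato--Ponce product estimate
$$\|fg\|_{3.5,\Omega} \le C\bigl(\|f\|_{L^\infty(\Omega)}\|g\|_{3.5,\Omega} + \|g\|_{L^\infty(\Omega)}\|f\|_{3.5,\Omega}\bigr)$$
to each factor, with $f,g = D\eta - I$ (and similarly for the cubic term). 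Here $\|D\eta - I\|_{3.5,\Omega} \le \sqrt{E} + C$ is merely bounded by the energy, but the identity $D\eta - I = \int_0^t Dv\, dt'$ combined with the Sobolev embedding $H^3(\Omega)\hookrightarrow L^\infty(\Omega)$ yields
$\|D\eta - I\|_{L^\infty(\Omega)} \le T\,\|Dv\|_{L^\infty_t H^3_x} \le T\, C\sqrt{E}$.  Multiplying these two bounds gives $\|Q_2(D\eta-I)\|_{3.5,\Omega} \le T\, P(\sup E)$, and analogously for the cubic term, completing the argument.

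The main obstacle is the half-derivative gap: the energy only controls $v\in H^4$, hence $Dv\in H^3$ but \emph{not} $H^{3.5}$, so one cannot hope to bound $\|D\eta - I\|_{3.5,\Omega}$ directly by a time integral of $\|Dv\|_{3.5,\Omega}$. The essential trick is that smallness is only needed in $L^\infty$ (where one does get a $T$ factor from the time integral of $Dv\in H^3\subset L^\infty$), while the full $H^{3.5}$ norm of $D\eta - I$ only enters multiplied by this small $L^\infty$ factor via the Kato--Ponce inequality. This delicate high-low splitting, together with the algebraic identity $\det D\eta = 1$ (which eliminates the top-order contribution that would otherwise prevent any $T$-factor at all), is what makes the $H^{3.5}$ bound possible.
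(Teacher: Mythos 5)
Your treatment of $\operatorname{div} v$ is essentially identical to the paper's: both write $\operatorname{div}v=-(A^j_i-\delta^j_i)v^i,_j$ using $\operatorname{div}_\eta v=0$, express $A-I$ as a time integral of $\partial_tA=-A\,Dv\,A$, and close with the multiplicative algebra property of $H^3(\Omega)$. For $\operatorname{div}\eta$, however, you take a genuinely different route. The paper never invokes $\det D\eta=1$; instead it time-differentiates the quantity $A^j_i\,D\eta^i,_j$ (which vanishes at $t=0$) to obtain
\begin{equation*}
D\operatorname{div}\eta(t)=\int_0^t \partial_tA^j_i\,D\eta^i,_j\,dt'-\int_0^t DA^j_i\,v^i,_j\,dt'-\Bigl(\int_0^t\partial_tA^j_i\,dt'\Bigr)\,D\eta^i,_j\,,
\end{equation*}
and estimates the right-hand side in $H^{2.5}(\Omega)$, where the algebra property alone suffices because the top-order factor $D^2\eta\in H^{2.5}$ is always paired with a time-integrated factor lying in $H^3$. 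You instead expand $\det\bigl(I+(D\eta-I)\bigr)=1$ to isolate $\operatorname{div}\eta-3$ as a quadratic-plus-cubic expression in $D\eta-I$ and estimate it directly at the $H^{3.5}$ level via the tame (Kato--Ponce) product inequality, with the factor of $T$ supplied by $\|D\eta-I\|_{L^\infty}\le T\,\sup_t\|Dv\|_{3}$. Both arguments rest on the same structural fact---the top-order norm of $\eta$ enters only linearly, multiplied by a factor that is $O(T)$ in a lower norm---but yours avoids taking the extra spatial derivative at the price of invoking the fractional tame product estimate on the domain (legitimate here, since the $H^{4.5}$-class domain admits an extension operator, consistent with the paper's definition of fractional norms by interpolation). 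Your proof is correct; the only cosmetic remarks are that $\operatorname{div}v(0)=\operatorname{div}u_0=0$, so no $M_0$ is actually needed for the second term, and for the first term the constant coming from $\|3\|_{3.5,\Omega}$ is absorbed into $M_0$ exactly as you indicate.
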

\begin{proof}
Since $A^j_i v^i,_j =0$, we see that
\begin{equation}\label{ldiv}
A^j_i Dv^i,_j = - DA^j_i\, v^i,_j  \,.
\end{equation}

\noindent
{\bf Step 1. Estimate for $ \operatorname{div}    \eta$.}  It follows that
$$
[A^j_i D\eta^i,_j]_t  =  \p_t A^j_i  D\eta^i,_j - DA^j_i\, v^i,_j \,.
$$
Using the fact that $\eta (x,0)=x$,
\begin{equation}\label{ccss1012}
[A^j_i D\eta^i,_j] (t)  =   \int_0^t \left( \p_t A^j_i  D\eta^i,_j - DA^j_i\, v^i,_j \right) dt' \,,
\end{equation} 
and hence
$$
D \operatorname{div} \eta(t) =  \int_0^t  \p_t A^j_i  D\eta^i,_j dt'  -
 \int_0^t DA^j_i\, v^i,_j dt'   -  \int_0^t \p_t A^j_i dt' \, D\eta^i,_j  \,.
$$
Again, the Sobolev embedding theorem provides us with the estimate
$$
\| \operatorname{div}  \eta(t)\|^2_{3.5,\Omega }  \le  T\, P( \sup_{t \in [0,T]} E (t)) \,.
$$

\noindent
{\bf Step 2. Estimate for $ \operatorname{div} v$.}   From $A^j_i v^i,_j =0$, we see that
\begin{equation}\label{ssdivv}
\operatorname{div} v(t) = -\int_0^t  \p_t A^j_i dt'\, v^i,_j \,.
\end{equation}
Hence, 
$$
\| \operatorname{div} v(t)\|_{3, \Omega }^2  \le  T\, P( \sup_{t \in [0,T]} E (t)) \,.
$$
\end{proof}

\subsection{Pressure estimates}  Letting $A^j_i \frac{\partial}{\partial x_j}$ act on (\ref{lageuler.a}), for $t\in [0,T ]$, the
Lagrangian  pressure function $q(x,t)$ satisfies
the elliptic equation
\begin{subequations}
\label{qpress}
\begin{alignat}{2}
-\left[ A^j_i A^k_i q,_k\right],_j & = v^i,_j A^j_r \, v ^r,_s \, A^s_i && \text{ in } \ \Omega \,, \\
q & = 0 && \text{ on } \ \Gamma\,.
\end{alignat}
\end{subequations}
Suppose that there exists a weak solution $u \in H^1_0(\Omega) $ to $ - \operatorname{div} [ \mathcal{A} \, D u ]=f$ in $\Omega$ with $u=0$ on
$\Gamma$, and where $ \mathcal{A} $ is positive-definite and symmetric.  Suppose further that $ f \in H^k(\Omega)$, $ \mathcal{A} \in H^{k+1}(\Omega)$ for integers $1 \ge 2$.     Then $u \in H^{k+2}(\Omega)
\cap H^1_0(\Omega) $ and satisfies
\begin{equation}\label{ccss1001}
\|u\|_{k+2} \le C \left( \|f\|_k + \mathcal{P} (\|A\|_{k+1}) \, \|f\|_0\right) \,,
\end{equation} 
where $ \mathcal{P} $ denotes a polynomial function of its argument.
By invoking the Sobolev embedding theorem,  the elliptic estimate (\ref{ccss1001})  shows that
 \begin{align*} 
\|q\|_{4 } &\le C(\| A \|_{2}, \|v\|_{3 }) \,  \| A \|_3  \,,\\
\|q\|_5 &\le C(\| A \|_2, \|v\|_3) \,  \| A \|_4  \,,
\end{align*} 
where the constant has polynomial dependence on $\| A \|_2$ and $\|v\|_3$.  Linear interpolation then yields
$$
\|q\|_{4.5} \le C(\| A \|_2, \|v\|_3) \, \| \eta \|_{4.5} \,.
$$
By time-differentiating  (\ref{qpress}), and
using our conventions of Section \ref{subsec_assumptions} concerning the generic constant $C$, we have the 
elliptic estimate on $ [0,T ]$
\begin{equation}\label{p_est}
\|q(t)\|_{4.5} + \|q_t(t)\|_4 \le C \| \eta(t)\|_{4.5} \,.
\end{equation}
\begin{remark} 
When the elliptic problem  (\ref{qpress}) is set  on the  approximate splash domain $\Omega^ \epsilon $, the elliptic constant
a priori depends on $ \epsilon >0$, via the charts $\theta_\pm ^ \epsilon $; however, thanks to Lemma \ref{cslemma1},
the elliptic constant is independent of $ \epsilon $ since the charts $\theta_\pm$ are bounded in $H^{4.5}$.
 \end{remark} 
 
 \subsection{Rayleigh-Taylor condition at time $t>0$} For each $l=1,...,K$, the fundamental theorem of calculus allows us to write
  $$[q(\theta_l^\epsilon(x),t)],_3=[q(\theta_l^\epsilon(x),0 )],_3+\int_0^t  [q_t(\theta_l^\epsilon(x),t')],_3\ dt'\,,$$ 
From the assumed Rayleigh-Taylor condition (\ref{signepsilon}) {\it on the initial data}, it follows  that for all $x\in B_0$,
\begin{equation*}
 [q(\theta_l^\epsilon(x),t)],_3 \ge \frac{C_{RT}}{2}- C \int_0^t \|q_t(\theta_l^\epsilon,t')\|_3 \ dt'\,.
 \end{equation*}
Thanks to our previously established bound (\ref{p_est}), we then see that on $B_0$,
 \begin{equation}
 \label{RTtime}
 [q(\theta_l^\epsilon(x),t)],_3\ge \frac{C_{RT}}{2}-  t\, P( \sup_{s \in [0,t]} E (s))\,,
 \end{equation}
 so that by choosing $T$ sufficiently small,  $ [q(\theta_l^\epsilon(x),t)],_3\ge \frac{C_{RT}}{4}$ for all $t \in [0,T]$.
 In what follows, we will drop the $\epsilon$ for notational convenience.

\subsection{Technical lemma}
Our energy estimates require the use of the following
\begin{lemma}\label{H12}
Let $H^{\frac{1}{2}} (\Omega)'$ denote the dual
space of $H^{\frac{1}{2}} (\Omega)$.    There exists a positive constant $C$ such that
$$
\| \bar \p F \|_{ H^{\frac{1}{2}}(\Omega)'} \le C\,
\| F \|_{\frac{1}{2}, \Omega }  \ \ \forall F
\in H^{\frac{1}{2}}(\Omega) \,.
$$
\end{lemma}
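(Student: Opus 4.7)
The idea is bilinear interpolation between two endpoint bounds: the trivial high-regularity bound $\|\bar\partial F\|_{0,\Omega} \le C\|F\|_{1,\Omega}$, which follows from the chain rule and the $H^{4.5}$-boundedness of the charts $\theta_l$, and the low-regularity dual bound $\|\bar\partial F\|_{H^1(\Omega)'} \le C\|F\|_{0,\Omega}$, which I will establish by integration by parts. Interpolating between these two estimates produces
$$\bar\partial \colon [L^2(\Omega), H^1(\Omega)]_{1/2} = H^{1/2}(\Omega) \longrightarrow [H^1(\Omega)', L^2(\Omega)]_{1/2} = H^{1/2}(\Omega)',$$
which is exactly the claim. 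The identification of the target space with $H^{1/2}(\Omega)'$ is the standard duality statement for Hilbert interpolation (cf.\ Bergh--L\"ofstr\"om or Lions--Magenes), which is valid because $L^2$ is dense in $H^1(\Omega)'$ and the spaces are reflexive.

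For the low-regularity endpoint, fix smooth $F$ and a test $\phi \in H^1(\Omega)$, and let $\{\xi_l\}_{l=1}^L$ be a smooth partition of unity subordinate to the cover $\{U_l\}$ of $\overline\Omega$ from Section~\ref{sec::charts}. Changing variables via the charts, the integral $\int_\Omega \bar\partial_\alpha F \cdot \phi\,dx$ becomes a finite sum of integrals of the form
$$\int_{B^+} \partial_{x_\alpha}(F \circ \theta_l)(x)\, (\xi_l \phi)(\theta_l(x))\, J_l(x)\,dx,$$
where $J_l = \det D\theta_l$, with analogous integrals over $B$ coming from the interior charts. The crucial observation is that $\bar\partial$ is \emph{tangential}: the outward unit normal to $B^+$ along the flat portion $B^0$ is $-{\bf e_3}$, whose first two components vanish; and the partition of unity causes $\xi_l \circ \theta_l$ to vanish near the curved portion $\{|x|=1\} \cap \overline{B^+}$. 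Integration by parts in $x_\alpha$ therefore produces no boundary contribution, and one obtains
$$\int_{B^+} \partial_{x_\alpha}(F \circ \theta_l)\,(\xi_l \phi \circ \theta_l)\, J_l\, dx = -\int_{B^+} (F \circ \theta_l)\, \partial_{x_\alpha}\!\bigl[(\xi_l \phi \circ \theta_l)\, J_l\bigr]\, dx.$$
Applying Cauchy--Schwarz and summing over $l$ yields $|\langle \bar\partial F, \phi\rangle_\Omega| \le C\|F\|_{0,\Omega}\|\phi\|_{1,\Omega}$, which is the desired endpoint bound, after extending by density to general $F \in L^2(\Omega)$.

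The main obstacle will be the bookkeeping associated with the change of variables and the commutators $[\partial_{x_\alpha}, \xi_l]$ together with the $\partial_{x_\alpha} J_l$ terms in the integration by parts; these are all controlled by the $H^{4.5}$-regularity of $\theta_l$ and the smoothness of $\xi_l$, but must be expanded carefully when bounding the right-hand side by $\|\phi\|_{1,\Omega}$. Beyond this, the interpolation step is routine, and the tangentiality of $\bar\partial$ on each chart is transparent in the flattened coordinates.
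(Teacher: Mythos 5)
Your proposal is correct and follows essentially the same route as the paper's proof: one establishes the endpoint bound $\|\bar\partial F\|_{H^1(\Omega)'}\le C\|F\|_{0,\Omega}$ by pulling back to $B^+$ via the charts and integrating by parts in the tangential variable (the paper uses the fixed cut-off $\zeta$ built into the definition of $\bar\partial$ rather than a separate partition of unity, but the bookkeeping is identical), and then interpolates with the obvious bound $\|\bar\partial F\|_{0,\Omega}\le C\|F\|_{1,\Omega}$.
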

\begin{proof}
Integrating by parts with respect to the tangential derivative
 yields for all $G\in
H^1(\Omega)$,
\begin{align*}
\int_{\Omega}  \bar \p F \, G \, dx & =  \sum_{l=1}^L \int_{U_l \cap \Omega} \zeta \, [(F \circ \theta_l),_\alpha] \circ \theta_l^{-1} \ G\, dx \\
&=\sum_{l=1}^L \int_{B^+} \zeta \circ \theta_l \, (F \circ \theta_l),_\alpha  \ G \circ \theta_l\,  \det D\theta_l \,dx \\
&=-\sum_{l=1}^L \int_{B^+} \zeta \circ \theta_l \, F \circ \theta_l  \ (G \circ \theta_l),_\alpha\,  \det D\theta_l \,dx  \\
& \qquad\qquad -\sum_{l=1}^L \int_{B^+}  F \circ \theta_l  \ G \circ \theta_l\, (\zeta \circ \theta_l \, \det D\theta_l),_\alpha \,dx \\
&
\le C \| F\|_{0, \Omega }\, \| G \|_{1,\Omega } \,,
\end{align*}
which shows that there exists $C>0$ such that
\begin{equation}\label{H1dual}
\forall F \in L^2(\Omega), \ \ \|  \bar\p F \|_{H^1(\Omega)'} \le C \|F\|_{0,\Omega } \,.
\end{equation}
Interpolating with the obvious inequality
$$
\forall F \in H^1(\Omega), \ \ \| \bar \p F\|_{L^2(\Omega)} \le C \|F\|_{1, \Omega }
$$
 proves the lemma.
\end{proof}

\subsection{Energy estimates for the normal trace of $\eta$ and $v$}
By denoting $\eta_l = \eta \circ \theta_l$ we see that
$$
\eta_l(t): B^+ \to  \Omega(t) \ \text{ for } \ \ l=1,...,K \,.
$$
We set $v_l = u \circ \eta_l$, $q_l = p \circ \eta_l$ and $A_l= [ D \eta_l ] ^{-1} $, $J_l = \det D\eta_l$, and $a_l = J_l A_l$.
It follows that for $l=1,...,K$,
\begin{subequations}
\label{localE}
\begin{alignat}{2}
\eta_l(t) &= \theta_l + \int_0^t v_l\ \ && \text{ in } B^+ \times [0,T] \,, \label{localE.a0} \\
 \p_t v_l  + A_l \, Dq_l &=0 \ \ && \text{ in } B^+ \times (0,T] \,, \label{localE.a} \\
\operatorname{div} _{\eta_l} v_l &=0 \ \ && \text{ in } B^+ \times [0,T] \,,\label{localE.b}  \\
q_l  &=0 \ \ && \text{ on } B^0 \times [0,T] \,,\label{localE.c}  \\
(\eta_l,v_l)  &=(\theta_l,u_0 \circ \theta_l ) \ \  \ \ && \text{ in } B^+ \times \{t=0\} \,. \label{localE.e}
\end{alignat}
\end{subequations}

\begin{proposition}\label{energyest}  For $t\in [0,T ]$,
\begin{align}
|  \p\eta(t)\cdot N |_{3, \Gamma }^2 + | \bar\p v(t)\cdot N |_{2.5, \Gamma }^2   \le  M_0
+  T\, P({\sup_{t\in[0,T]}} E (t))   \,.   \label{main_est}
\end{align}
\end{proposition}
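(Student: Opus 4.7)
The plan is to work in a fixed boundary chart $l \in \{1,\dots,K\}$ with the localized system (\ref{localE}) pulled back to $B^+$, multiply by the cut-off $\zeta$ (supported in $B(0,\varsigma)$), and assemble the global boundary estimate at the end by summing over $l$ and using the equivalence Lemma \ref{norms}. The central idea is to apply three tangential derivatives $\bar\partial^3$ to the momentum equation (\ref{localE.a}), pair with $\zeta^2 \bar\partial^3 v_l^i$, integrate over $B^+$, and use the Rayleigh-Taylor condition (\ref{RTtime}) together with an integration by parts in $x_s$ to convert a boundary contribution on $B^0$ into a definite-sign quantity that controls $|\bar\partial \eta \cdot N|_{3, \Gamma}$.

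Concretely, testing $\bar\partial^3(\partial_t v_l^i + A^k_i q_l,_k) = 0$ against $\zeta^2 \bar\partial^3 v_l^i$, the time-derivative piece gives $\tfrac12 \frac{d}{dt} \int \zeta^2 |\bar\partial^3 v_l|^2$. For the pressure piece, iterating the differentiation formula (\ref{a1}) shows that the only distribution of derivatives placing four derivatives on $\eta_l$ is $-A^s_i A^k_r \bar\partial^3(\eta_l^r,_s)$; all other terms place at most three derivatives on $\eta_l$, and by Sobolev embedding together with (\ref{p_est}) contribute only $TP(\sup_{t\in[0,T]} E(t))$. An integration by parts in $x_s$ then transfers the extra derivative off of $\eta_l^r$, producing an interior term treatable as before (using the divergence-free relation $A^s_i v_l^i,_s = 0$ and Proposition \ref{div_est}) together with a boundary term on $B^0$ (the contribution on the upper hemisphere of $\partial B^+$ vanishing since $\zeta = 0$ there).

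On $B^0$ the outward normal is $-\mathbf{e}_3$, and since $q_l = 0$ there by (\ref{localE.c}), we have $q_l,_\alpha = 0$ for $\alpha = 1,2$, so only the index $k = 3$ survives in $q_l,_k$. Applying the cofactor identity (\ref{n1}) with $N = -\mathbf{e}_3$, the boundary contribution becomes
$$\int_{B^0} \zeta^2\, J_l^{-2}\, g\, q_l,_3\, (\bar\partial^3 v_l \cdot n)(\bar\partial^3 \eta_l \cdot n)\, dx_1 \,dx_2.$$
Using $v_l = \partial_t \eta_l$ from (\ref{localE.a0}), this equals $\tfrac12 \frac{d}{dt}$ of $\int_{B^0} \zeta^2 J_l^{-2} g\, q_l,_3\, (\bar\partial^3 \eta_l \cdot n)^2$ up to commutator terms in which $\partial_t$ falls on the factor $J_l^{-2} g\, q_l,_3\, n$; each such commutator is controlled by $P(\sup_{t\in[0,T]} E(t))$ via (\ref{p_est}) and the built-in $v_t$-bound inside $E(t)$. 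Integrating in time from $0$ to $t$, discarding the nonnegative energy piece, and invoking the pointwise lower bound $q_l,_3 \ge C_{RT}/4$ provided by (\ref{RTtime}), yields $\int_{B^0} \zeta^2 (\bar\partial^3 \eta_l \cdot n)^2 \le M_0 + T P(\sup_{t\in[0,T]} E(t))$. Summing over $l = 1,\dots,K$, adjoining the easier estimates for lower tangential multi-indices of order $\le 2$, and invoking Lemma \ref{norms} produces the $\eta$-half of (\ref{main_est}). The $v$-half is obtained by running the same identity at one-half derivative less, where the fractional tangential operator is handled via the duality Lemma \ref{H12} combined with the normal-trace inequality (\ref{normaltrace}) and the curl-divergence information of Propositions \ref{curl_est}--\ref{div_est}.

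The principal obstacle is the commutator bookkeeping: distributing $\bar\partial^3$ among the factors $A^k_i$ and $q_l,_k$ generates many subleading terms, each of which must be verified either to be a perfect $\tfrac{d}{dt}$ of a boundary quantity (absorbable on the left-hand side) or of magnitude $O(P(\sup_{t\in[0,T]} E(t)))$ (absorbable into $TP(\sup E)$ after time integration), with care needed in particular for the terms in which the $\bar\partial^3$-derivatives are split between $A$ and $\eta$. A secondary technical difficulty is the fractional $H^{2.5}$-estimate for $v$: a fractional tangential operator must be commuted past $\partial_{x_s}$ inside the integration by parts, which is justified via an interpolation argument anchored by Lemma \ref{H12}.
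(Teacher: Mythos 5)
Your overall strategy is the paper's: a tangential energy estimate in each boundary chart, an integration by parts that produces a boundary integral on $B^0$, the cofactor identity (\ref{n1}) to rewrite it in terms of $\bar\p^k\eta\cdot n$, the Rayleigh--Taylor bound (\ref{RTtime}) to make that term coercive, and the normal trace inequality (\ref{normaltrace}) plus Proposition \ref{div_est} for the velocity half. However, there is a genuine quantitative gap: your derivative count is one short throughout. You apply $\bar\p^3$ to the momentum equation and conclude with control of $\int_{B^0}\zeta^2\,|\bar\p^3\eta_l\cdot n|^2\,dx_h$, which is an $L^2(B^0)$ bound on \emph{three} tangential derivatives of $\eta\cdot n$, i.e.\ a bound on $|\bar\p\eta\cdot N|_{2,\Gamma}$, not on $|\bar\p\eta\cdot N|_{3,\Gamma}$ as the proposition asserts. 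Likewise the kinetic term your identity produces is $\|\zeta\bar\p^3 v\|^2_{0,B^+}$, which via (\ref{normaltrace}) yields only $|\bar\p v\cdot N|_{1.5,\Gamma}$, not $|\bar\p v\cdot N|_{2.5,\Gamma}$. The correct test is $\bar\p^4$ of the momentum equation against $\zeta^2\bar\p^4 v$; the boundary term then carries $|\bar\p^4\eta\cdot n|^2_{0,B^0}$ and the kinetic term carries $\|\zeta\bar\p^4 v\|^2_{0,B^+}$, which are exactly what the stated norms require.

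This is not a cosmetic fix, because at the $\bar\p^4$ level the subleading terms you dismiss as directly controllable no longer are. The interior term left after the integration by parts (the analogue of the paper's ${\mathcal{I}_2}_b$) pairs $\bar\p^4\eta$ against $\bar\p^4 A\sim\bar\p^4 D\eta$, and the term with all four tangential derivatives on $q$ (the analogue of $\mathcal{I}_3$) pairs $\bar\p^4 q$ against $\bar\p^4 Dv$; neither is bounded by $P(\sup E)$ by Cauchy--Schwarz alone. Both require the duality pairing $\|\cdot\|_{1/2}\times\|\cdot\|_{(H^{1/2})'}$ of Lemma \ref{H12} together with the divergence-free constraint, which is precisely why that lemma is in the paper. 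Finally, your proposed ``same identity at one-half derivative less'' with a fractional tangential operator for the $v$-estimate is both unnecessary and problematic (commuting $\bar\p^{1/2}$ through the nonlinear coefficients $A$ is delicate): the $H^{2.5}(\Gamma)$ bound on $\bar\p v\cdot N$ falls out of the \emph{same} $\bar\p^4$ identity, since $\|\zeta\bar\p^4 v\|_{0,B^+}$ is already controlled by the $\mathcal{I}_1$ term, and (\ref{normaltrace}) applied to $w=\zeta\bar\p^3 v$ together with $\|\bar\p^3\operatorname{div}v\|_0$ from Proposition \ref{div_est} gives $|\zeta\bar\p^4 v\cdot N|_{-1/2,B^0}$, i.e.\ $|\bar\p v\cdot N|_{2.5,\Gamma}$.
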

\begin{proof} 
We compute  the following $L^2(B^+)$ inner-product:
\begin{equation}\label{ccss7001}
0= \left( \zeta \bar \p^4 [  \p_t v_l  + A_l \, Dq_l ] \ , \ \zeta \bar \p^4 v_l \right)_{L^2(B^+)} \,.
\end{equation} 
To simplify the notation, we fix $l \in \{1,...,K\}$ and drop the subscript.   We have that
\begin{align} \label{Iintegrals}
0 =  \underbrace{{\frac{1}{2}} \frac{d}{dt} \| \zeta \bar \p^4 v(t)\|^2_{0,B^+}}_{ \mathcal{I} _1 } + \underbrace{ \int_{B^+} \zeta^2 \bar \p^4 A^k_i \ q,_k \, \bar\p^4 v^i dx}_{ \mathcal{I} _2} +  \underbrace{ \int_{B^+} \zeta^2 A^k_i \bar \p^4 q,_k \,
\bar \p^4 v^i dx}_{ \mathcal{I} _3} + \mathcal{R} \,,
\end{align}
where $ \mathcal{R}$ denotes integrals over $B^+$ consisting of lower-order  terms (or remainders)
 which can easily be shown, via the Cauchy-Schwarz inequality, to
satisfy
$$
\int_0^T| \mathcal{R}(t)| dt \le M_0 +  T\, P( \sup_{t \in [0,T]} E(t)) \,.
$$

Using the identity (\ref{a1}), we see that
\begin{align*}
\mathcal{I} _2 & = - \int_{B^+} \zeta^2 A^k_r \bar\p^4 \eta^r,_s A^s_i q,_k \bar \p^4 v^i dx + \mathcal{R}  \\
 &=
 - \underbrace{\int_{B^0} \zeta^2  A^k_r \bar\p^4 \eta ^r  q,_k \bar \p^4 v^i  A^s_i N^0_s \, dx_h}_{ {\mathcal{I} _2}_a}
  +\underbrace{\int_{B^+} \zeta^2 A^k_r \bar\p^4 \eta ^r A^s_i q,_k \bar \p^4 v^i,_s dx }_{ {\mathcal{I} _2}_b}
   + \mathcal{R} \,,
\end{align*}
where $dx_h=dx_1dx_2$ denotes the surface measure on $B^0$.
As $q=0$ on $B^0$,  $q,_1=0$ and $q,_2=0$ on $\Gamma$,  and since the exterior normal on $B_0$ is $N^0=-e_3$, we have $A^3_r=-A^k_r N^0_k$, which then implies
\begin{align*}
  { \mathcal{I} _2}_a
 & =  \int_{B^0}  q,_3 \zeta^2  \bar\p^4 \eta ^r  A^k_r N^0_k  \bar \p^4\ v^i A^s_i N^0_s  \, dx_h \,.
 \end{align*}

We define  $ n_l $ to be the outward unit normal to the moving surface $\eta_l (t, B^0)$,
so that  from (\ref{n1}),
 \begin{equation}\nonumber
{A_l}^k_i N^0_k =   J_l  ^{-1} \sqrt{g_l } n_l (\eta_l) \text{ on } B^0 \,.
\end{equation}

Dropping the subscript $l$ again and writing $n$ for $n(\eta)$,  it follows that
\begin{align*}
  { \mathcal{I} _2}_a(t) & =  \int_{B^0} q,_3 \zeta^2 \bar\p^4  \eta \cdot  n \ \bar \p^4   v \cdot  n  \, |\det  g |  J ^{-2} \, dx_h \\
  &= \frac{1}{2} \frac{d}{dt} \underbrace{ \int_{B^0} q,_3 \zeta^2
   |\bar\p^4  \eta \cdot n |^2 \, |\det g | J^{-2}  \, dx_h }_{ { \mathcal{K}}_a} 
   - \underbrace{ \int_{B^0}{\frac{1}{2}} \zeta^2 \bar\p^4   \eta^i \bar \p^4 
  \eta^j \p_t [ ( n_i n_j \, |\det g| \, J^{-2} ] dx_h}_{{ \mathcal{K}}_b} \,.
 \end{align*}
By the assumption of Section \ref{subsec_assumptions},
$$
\left |  \p_t [ n_in_j\, |\det g | \, J^{-2} ]\right|_{ L^ \infty (\Gamma)} \le C \,,
$$
from which it follows that
$$
\int_0^T { \mathcal{K}}_b(t) dt \le C\,T\, P( \sup_{t \in [0,T]} E (t)) \,.
$$
Using our Rayleigh-Taylor condition (\ref{RTtime}) for  $q,_3(t)$, and bounds for $\det  g (t), J $ which can be established similarly on $[0,T ]$, we see that
$$
\bar c\, |\zeta \bar \p^4  \eta_l(t) \cdot n_l(t)|_{0,B^0}^2 - T\, P( \sup_{t \in [0,T]} E (t)) \le \int_0^T { \mathcal{I} _2}_a(t) dt  \,,
$$
for a constant $\bar c$ which depends on $ C_{RT},  g (0)$,  and $J(0)= \det D\theta_l$.     We set 
$$
N_l = \frac{{\theta_l},_1 \times {\theta_l},_2}{\left|  {\theta_l},_1 \times {\theta_l},_2\right|} \,.
$$

By the fundamental theorem of calculus $n_l(t) = N_l +  \int_0^t \p_t  n_ l(t') dt'$, and by our assumptions in Section \ref{subsec_assumptions},
$\sup_{[0,T]}\left|  \p_t  n_l (t)\right|_{ L^ \infty (\Gamma)} \le C$; hence,
$$
\bar c\, |\zeta \bar \p^4  \eta_l(t) \cdot  N_l |_{0,B^0}^2 \le  \int_0^T { \mathcal{I} _2}_a(t) dt +  T\, P( \sup_{t \in [0,T]} E (t)) \,,
$$
and hence
$$
\bar c\, |\zeta \bar \p  \eta_l(t) \cdot  N_l |_{3,B^0}^2 \le \int_0^T { \mathcal{I} _2}_a(t) dt+  T\, P( \sup_{t \in [0,T]} E (t)) \,.
$$

It remains to show that the integrals $\int_0^T { \mathcal{I} _2}_b(t)dt$ and $\int_0^T \mathcal{I} _3(t)dt$ are both bounded by $ T\, P( \sup_{t \in [0,T]}
E (t))$.    Using (\ref{localE.b}),
\begin{align*}
{ \mathcal{I} _2}_b(t) & =
- \int_{B^+} \zeta^2 A^k_r \, \bar\p^4 \eta ^r\, q,_k \,v^i,_s \, \bar \p^4  A^s_i \,  dx + \mathcal{R}  \\
& \le C \| \zeta \bar \p^4\eta (t)\|_{\frac{1}{2}, B^+} \|\zeta \bar \p^4 A (t)\|_{H^ {\frac{1}{2}} (B^+)'}  + \mathcal{R}  \\
& \le C \|  \bar \p^4 \eta (t)\|_{\frac{1}{2}} \| \bar \p^3 A (t)\|_{H^ {\frac{1}{2}} (\Omega)} + \mathcal{R}  \\
& \le C\sup_{t \in [0,T]} E(t)  + \mathcal{R}   \,,
\end{align*}
where we have used Lemma \ref{H12} for the second inequality.

Finally,
\begin{align*}
\mathcal{I} _3(t) &= - \int_{B^+} \zeta^2 \bar \p^4 q \, \bar \p^4 v^i,_k \, A^k_i\,  dx =
\int_{B^+} \zeta^2\bar \p^4 q \,  v^i,_k \,  \bar \p^4 A^k_i\,   dx + \mathcal{R} \\
& \le C \| \zeta \bar \p^3 q(t)\|_{\frac{1}{2}, B^+} \| \zeta \bar \p^4 A (t)\|_{H^ {\frac{1}{2}} (B^+)'}  + \mathcal{R}  \\
& \le C\sup_{t \in [0,T]} E(t)  + \mathcal{R}   \,,
\end{align*}
where we have used the pressure estimate (\ref{p_est}) and Lemma \ref{H12} for the last inequality.

Summing the estimates for $ \mathcal{I} _1$, $\mathcal{I} _2$, $ \mathcal{I} _3$ and integrating (\ref{Iintegrals}) from $0$ to $T$, we
obtain the inequality,
$$
\sup_{t \in [0,T]} \left( | \zeta \bar \p \eta(t) \cdot N_l  |_{3,B^0}^2 + \| \zeta  \bar \p^4 v(t)\|_{0,B^+}^2\right) \le M_0 +  T\, P( \sup_{t \in [0,T]} E (t)) \,.
$$
According to  Proposition \ref{div_est},
$$
\sup_{t \in [0,T]} \| \operatorname{div} v(t)\|^2_3  \le M_0 +  T\, P( \sup_{t \in [0,T]} E (t)) \,,
$$
from which it follows that
$$
\sup_{t \in [0,T]} \| \zeta  \bar \p^3 \operatorname{div} v(t)\|^2_{0, B^+}  \le M_0 +  T\, P( \sup_{t \in [0,T]} E (t)) \,.
$$
Hence, the normal trace estimate (\ref{normaltrace}) shows that
$$
\sup_{t \in [0,T]} \left( | \zeta \bar\p^4 v(t)\cdot N_l |_{- {\frac{1}{2}} ,B^0}^2\right) \le M_0 +  T\, P( \sup_{t \in [0,T]} E (t)) \,,
$$
from which it follows that 
$$
\sup_{t \in [0,T]} \left( | \zeta \bar \p \eta(t) \cdot N_l  |_{3,B^0}^2
+ | \zeta  \bar\p v(t)\cdot N_l |_{2.5,B^0}^2\right) \le M_0 +  T\, P( \sup_{t \in [0,T]} E (t)) \,.
$$
\end{proof}

Combining Proposition \ref{energyest} with the curl  estimates in  Proposition \ref{curl_est} and the divergence estimates in Proposition \ref{div_est} for $\eta(t)$ and $v(t)$ and using (\ref{hodge}) together with the fact that $v_t = - A ^T D q$ provides us with
 the following
\begin{theorem} \label{thm_apriori} Suppose that the initial pressure $p_0$ satisfies $\frac{\p p}{\p N} < 0$ on $\Gamma$ and
that $E(0) < \infty $.
 For  $T$ taken sufficiently small and for a polynomial function $P_2$,
\begin{align*}
\sup_{ t\in[0,T]} \left( \|  \eta(t) \|_{4.5,\Omega }^2 + \| v(t)\|_{4,\Omega }^2 + \| \operatorname{curl}_{\eta} v(t)\|_{3.5,\Omega }^2 + \| v_t(t)\|_{3.5,\Omega }^2 \right)   \le  M_0
+  T\, P_2({\sup_{t\in[0,T]}} E (t))   \,. 
\end{align*}
Moreover $\frac{\p p}{ \p n} < 0$ on $\Gamma(t)$ for $t\in [0,T]$.
\end{theorem}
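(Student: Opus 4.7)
The plan is to assemble the ingredients already established (the div–curl–normal-trace estimates in Propositions \ref{curl_est}, \ref{div_est}, \ref{energyest}, the pressure estimate (\ref{p_est}), and the Hodge-type inequality in Proposition \ref{prop1}) into a closed polynomial inequality of the form $E(t) \le M_0 + T\, P_2(\sup_{[0,T]} E)$, to which the polynomial-type inequality (\ref{f}) applies to give $\sup_{[0,T]} E(t) \le 2 M_0$.

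First, to estimate $\|\eta(t)\|_{4.5,\Omega}^2$, I apply the Hodge estimate (\ref{hodge}) with $F = \eta$ and $s = 4.5$. The $L^2$-norm of $\eta$ is controlled by $\|e\|_0 + T\sup\|v\|_0$, the curl and divergence bounds on the right-hand side are exactly provided by Propositions \ref{curl_est} and \ref{div_est}, and the tangential normal-trace term $|\bar\p\eta\cdot N|_{3,\Gamma}$ is controlled by Proposition \ref{energyest} once one patches the local bounds $|\zeta\bar\p\eta_l\cdot N_l|_{3,B^0}$ with a partition of unity subordinate to the charts $\theta_l$. The same strategy with $F = v$ and $s = 4$ controls $\|v(t)\|_{4,\Omega}^2$; the only subtlety is that Proposition \ref{curl_est} bounds $\operatorname{curl}_\eta v$ rather than $\operatorname{curl} v$, but their difference is $\varepsilon_{\cdot jk}(\delta_j^s - A_j^s) v^k,_s$, and since $A(t) = \mathrm{Id} + \int_0^t A_t\, dt'$ this difference is controlled by $T\,P(\sup E)$. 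Thus
\[
\|\eta(t)\|_{4.5,\Omega}^2 + \|v(t)\|_{4,\Omega}^2 \le M_0 + T\,P(\sup_{[0,T]} E(t)).
\]

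Next, the Lagrangian curl term $\|\operatorname{curl}_\eta v(t)\|_{3.5,\Omega}^2$ is directly furnished by Proposition \ref{curl_est}. For the time-derivative term $\|v_t(t)\|_{3.5,\Omega}^2$, I use equation (\ref{lageuler.a}), $v_t = -A^T Dq$, so that in the multiplicative algebra $H^{3.5}(\Omega)$ and with the pressure estimate (\ref{p_est}),
\[
\|v_t(t)\|_{3.5,\Omega} \le C\,\|A(t)\|_{3.5,\Omega}\,\|q(t)\|_{4.5,\Omega} \le P(\|\eta(t)\|_{4.5,\Omega}),
\]
which brings this contribution under the same umbrella bound. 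Summing the four terms defining $E(t)$ in Definition \ref{defn_E} yields the polynomial inequality $E(t) \le M_0 + T\, P_2(\sup_{[0,T]} E(t))$, and the polynomial-type inequality (\ref{f}) closes the argument for $T$ sufficiently small, giving the bound $\sup_{[0,T]} E(t) \le 2M_0$ stated in the theorem.

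Finally, the persistence of the Rayleigh–Taylor sign condition on $[0,T]$ is exactly the content of (\ref{RTtime}): since $q,_3\circ\theta_l(x,0) > C_{RT}$ and $\|q_t\|_3$ is polynomially controlled by $E$, the fundamental theorem of calculus gives $q,_3\circ\theta_l(x,t) \ge C_{RT}/2 - T\,P(\sup E) \ge C_{RT}/4$ for $T$ small enough, which via the identity (\ref{n1}) translates into $\p p/\p n < 0$ on $\Gamma(t)$. The principal obstacle in executing this plan is bookkeeping: ensuring that the regularity indices in the Hodge inequality line up exactly with those provided by Propositions \ref{curl_est}--\ref{energyest} (in particular the $s-3/2 = 3$ trace exponent for $\eta$ and $s-3/2 = 2.5$ for $v$), that the partition-of-unity step transfers the local boundary-trace bounds into a genuine global $|\cdot|_{3,\Gamma}$ (resp.\ $|\cdot|_{2.5,\Gamma}$) norm, and that all ``remainder'' terms hidden by the Cauchy–Schwarz inequality in Proposition \ref{energyest} remain of the form $T\,P(\sup E)$; the delicate $H^{1/2}$-duality trick (Lemma \ref{H12}) already used in Proposition \ref{energyest} is what makes this possible at the top order.
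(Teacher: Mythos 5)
Your proposal is correct and follows essentially the same route as the paper, which itself only states that the theorem follows by "combining Proposition \ref{energyest} with the curl estimates in Proposition \ref{curl_est} and the divergence estimates in Proposition \ref{div_est} \ldots using (\ref{hodge}) together with the fact that $v_t = -A^T Dq$," with the sign-condition persistence coming from (\ref{RTtime}). You have simply filled in the assembly the paper leaves implicit (including the minor but genuine point that $\operatorname{curl} v$ and $\operatorname{curl}_\eta v$ differ by a term of size $T\,P(\sup E)$), so nothing further is needed.
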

%
(The rigorous construction of solutions to this problem was established in \cite{CoSh2007} using an approximation scheme founded on
the idea of horizontal convolution-by-layers.)   We next show that our solutions are continuous in time.

\begin{theorem}[Continuity in time]\label{appendix_thm2} The solution satisfies 
{\small
$$ \eta \in C([0,T]; H^{4.5}(\Omega))\,, \ v \in C([0,T]; H^{4}(\Omega)) \,, \  \operatorname{curl} _\eta v \in
C([0,T]; H^{3.5}(\Omega)) \,, \  v_t \in C([0,T]; H^{3.5}(\Omega)) \,.
$$}
\begin{proof} 
It follows immediately from Theorem \ref{thm_apriori} that
{\small
\begin{equation}\label{ccss1003}
 \eta \in C([0,T]; H^{4}(\Omega))\,, \ v \in C([0,T]; H^{3.5}(\Omega)) \,, \  \operatorname{curl} _\eta v \in
C([0,T]; H^{3}(\Omega)) \,, \  v_t \in C([0,T]; H^{3}(\Omega)) \,.
\end{equation}
 }
 
 Furthermore, by the same argument used to establish  (\ref{new4-13.2}) and (\ref{new4-13.3}), it follows that
 \begin{align} 
  \eta \in C([0,T]; H^{4.5}(\Omega)\operatorname{-w})\,, \ v \in C([0,T]; H^{4}(\Omega)\operatorname{-w}) \,, \nonumber \\
   \operatorname{curl} _\eta v \in C([0,T]; H^{3.5}(\Omega)\operatorname{-w}) \,, \  v_t \in C([0,T]; H^{3.5}(\Omega)\operatorname{-w}) \,,
   \label{ccss1004}
\end{align} 
the notation $H^s( \Omega)$-w denoting the weak topology.   Thus, it suffices to prove continuity of the norms
$$
 \|\eta(t)\|_{4.5}  \,,  \|v(t)\|_{4} \,,\|v_t(t)\|_{3.5}\,, \text{ and } \| \operatorname{curl} _\eta v(t)\|_{3.5} \,.
$$

For $h >0$ we define the horizontal difference quotient 
$$\bar \p^h  u:= {\frac{1}{h}} \left( u(\cdot+h{\bf e_i}) - u(\cdot) \right)\,, (i=1,2)\,,$$
and we proceed as in (\ref{ccss7001}), using $\bar\p^h \bar \p^3$ in place of $\bar \p^4$. The same energy estimate then yields
 $$
\frac{d}{dt} \left(\| \zeta  \bar \p^h \bar \p^3 v_l(t)\|_{0,B^+}^2 +  | \zeta  \sqrt{-q,_N}\bar \p^h \p^3 \eta_l(t) \cdot n_l( \eta_l)  |_{0,B^0}^2 \right) \le C \left( \| \eta(t) \|_{4.5}^2 + \| v(t)\|_4^2 \right) \,.
$$

With
$ \mathcal{F} _h (t) := \| \zeta \bar \p^h  \bar \p^3 v_l(t)\|_{0,B^+}^2$ and 
$ \mathcal{G} _ h (t) := | \zeta  \sqrt{-q,_N} \bar \p^h\bar \p^3  \eta_l(t) \cdot n_l( \eta_l)  |_{0,B^0}^2 $, we have that
$$
\frac{d}{dt} \left[
\mathcal{F}_h  (t)  +\mathcal{G} _h (t)
\right] \le C \left( \| \eta(t) \|_{4.5}^2 + \| v(t)\|_4^2 \right) \,.
$$
Integrating  from $t $ to $t+ \delta $, $0 <  \delta  \ll 1$,   and setting  $ \mathcal{H}_ h := \mathcal{F}_ h  + \mathcal{G}_h  $, we see that
\begin{equation}\nonumber
|\mathcal{H}_ h  (t+ \delta ) - \mathcal{H}_h (t) | \le \delta  C M_0 \,.
\end{equation} 
Since the bounds are independent of $ h >0$, we see that
\begin{equation}\label{ccss1020}
|\mathcal{H}  (t+ \delta ) - \mathcal{H}  (t) | \le  \delta C M_0 \,.
\end{equation} 
where $ \mathcal{H} = \mathcal{F} + \mathcal{G} $, and 
$ \mathcal{F}  (t) := \| \zeta   \bar \p^4 v(t)\|_{0,B^+}^2$ and 
$ \mathcal{G}  (t) := | \zeta  \sqrt{-q,_N}\bar \p^4  \eta(t) \cdot n( \eta)l  |_{0,B^0}^2 $

Hence, $t \mapsto \mathcal{H}(t)$ is uniformly Lipschitz continuous for $t \in [0,T]$.   Consider the product topology on the
Hilbert space
$ \mathcal{X} := L^2(B^+) \times L^2(B^0) $, with norm $ \| (f,g) \|_ { \mathcal{X} }^2 = \|f\|_{ L^2(B^+) }^2 + \|g\|_{ L^2(B^0) }^2$.
The convergence in the norm given by (\ref{ccss1020}) together with the continuity into the weak topology, given by
   (\ref{ccss1004}), show that $(\zeta \bar \p^4 v ,   \zeta  \sqrt {-q_{,N}} \bar \p^4 \eta \cdot n(\eta))$ are continuous into $ \mathcal{X} $. 
 We sum over all boundary charts;
thanks to (\ref{ccss1003}) and the elliptic estimate (\ref{ccss1001}), 
$q \in C([0,T]; H^{4}(\Omega))$, from which it follows that
$$
\bar \p^4 v \in C^0([0,T]; L^2(\Omega) ) \text{ and } \bar \p^2 \eta \cdot n(\eta) \in C^0( [0,T]; H^2(\Gamma)) \,.
$$

In order to prove that $\|\eta(t)\|_{4.5}$ is continuous for each $t \in [0,T]$, we will rely on the Lagrangian divergence and curl identities which
we established earlier.
  From equations  (\ref{ccss1000}) and  (\ref{ccss1011}), we see that
 \begin{align*}
\|\operatorname{curl} _{ \eta } D \eta(t+h)  -  \operatorname{curl} _{ \eta } D \eta(t)\|_{2.5} \le C h M_0 \,,
\end{align*}
so that $ \operatorname{curl} _\eta D\eta(t) \in C^0([0,T]; H^{2.5}(\Omega))$.   Similarly, from (\ref{ccss1012}), 
 \begin{align*}
\|\operatorname{div} _{ \eta } D \eta(t+h)  - \operatorname{div} _{ \eta } D \eta(t)\|_{2.5} \le C h M_0 \,,
\end{align*}
so that $ \operatorname{div} _\eta D\eta(t) \in C^0([0,T]; H^{2.5}(\Omega))$.   

It follows that for each $l=1,...,K$, 
\begin{alignat*}{2}
\operatorname{curl} _{ \eta_l} (\zeta_l \bar \p^2 \eta_l) & \in   && \  C^0([0,T]; H^{1.5}(B^+))\,, \\
\operatorname{div} _{\eta_l} (\zeta_l \bar \p^2 \eta_l) & \in && \  C^0([0,T]; H^{1.5}(B^+)) \,, \\
 \zeta_l \bar \p^2 \eta_l \cdot n_l( \eta_l) &  \in && \  C^0( [0,T]; H^2(B^0))\,.
\end{alignat*}
We let $w_l=   \bar \p^2 \eta \circ \eta ^{-1} $ denote the Eulerian counterpart to $\bar \p^2 \eta$, so that $w_l ( \cdot , t) : \eta_l(B^+, t) \to \mathbb{R}  ^3$.
Then, by the chain-rule, we see that, due to the continuity provided by (\ref{ccss1003}),
\begin{alignat*}{2}
\operatorname{curl}  w_l  & \in   && \  C^0([0,T]; H^{1.5}( \eta_l(B^+, t))\,, \\
\operatorname{div} w_l  & \in && \  C^0([0,T]; H^{1.5}( \eta_l(B^+, t)) \,, \\
w_l \cdot n_l &  \in && \  C^0( [0,T]; H^2( \eta_l(B^0, t))\,.
\end{alignat*}
We may then infer from Proposition \ref{prop1}, that 
$$
w_l \in C^0([0,T]; H^{2.5}( \eta_l(B^+, t)) \,,
$$
with bound depending only on $\eta_l \in C^0([0,T]; H^{2.5}(B^+))$.   It follows that for each $l=1,...,K$,  $\bar \p^2 \eta_l  \in C^0([0,T]; H^{2.5}(B^+))$.
It follows that $\bar \p^2 D\eta_l  \in C^0([0,T]; H^{1.5}(B^+))$, and hence the trace satisfies
 $D \eta_l  \in C^0([0,T]; H^{3}(B^0))$.   Summing over $l=1,...,K$, we see that
$$
 D\eta  \in C^0([0,T]; H^{3}(\Gamma)) \,.
$$

Therefore, we have the following elliptic system:
\begin{alignat*}{2}
\operatorname{curl} _{ \eta} (D \eta) & \in   && \  C^0([0,T]; H^{2.5}(\Omega ))\,, \\
\operatorname{div} _{\eta} ( D \eta) & \in && \  C^0([0,T]; H^{2.5}\Omega )) \,, \\
D\eta &  \in && \  C^0( [0,T]; H^3( \Gamma ))\,.
\end{alignat*}
Setting $W = D\eta \circ \eta ^{-1} $,  and using the fact that $\eta \in C^0([0,T]; H^4(\Omega))$ we see that
\begin{alignat*}{2}
\operatorname{curl}  W  & \in   && \  C^0([0,T]; H^{2.5}(\Omega (t))\,, \\
\operatorname{div} W  & \in && \  C^0([0,T]; H^{2.5}( \Omega (t)) \,, \\
W&  \in && \  C^0( [0,T]; H^3( \Gamma(t))\,.
\end{alignat*}
Elliptic estimates then show that
$$
W   \in  C^0([0,T]; H^{3.5}(\Omega (t))
$$
with a bound that depends on $\eta \in C^0([0,T]; H^4(\Omega))$ (but not on $\|\eta(t)\|_{4.5}$).  In turn, 
$D\eta \in C^0([0,T]; H^{3.5}(\Omega))$, and hence
$$\eta \in C^0([0,T]; H^{4.5}(\Omega)) \,.$$

Analogously, we find that $v \in C^0([0,T]; H^{4}(\Omega))$, which by elliptic estimates shows that
 $q \in C^0([0,T]; H^{4.5}(\Omega))$.  The momentum equation then shows that  $v_t \in C^0([0,T]; H^{3.5}(\Omega))$.
\end{proof} 
\end{theorem}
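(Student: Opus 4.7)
The plan is to bootstrap from the uniform a priori bounds of Theorem \ref{thm_apriori} up to norm continuity at the top regularity. First, the boundedness of $E(t)$ together with the fundamental theorem of calculus immediately gives continuity one order below the top: $\eta\in C([0,T];H^4(\Omega))$, $v\in C([0,T];H^{3.5}(\Omega))$, $\operatorname{curl}_\eta v\in C([0,T];H^3(\Omega))$, and $v_t\in C([0,T];H^3(\Omega))$, since the time-derivatives of these quantities live in the bounded energy space. An Arzela-Ascoli argument of the sort already carried out in Section \ref{sec::asymp} then upgrades this to continuity into the weak topology at the top regularity, i.e.\ $\eta\in C([0,T];H^{4.5}(\Omega)\text{-w})$ and similarly for the other three quantities.

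The central step is to promote weak continuity to strong continuity at the top regularity. My approach is to prove that the specific energy-type quantity controlling the top-order boundary and volume norms, namely (summed over boundary charts $l=1,\ldots,K$)
\begin{equation*}
\mathcal{H}(t) = \sum_l \Bigl(\|\zeta\,\bar\partial^4 v_l(t)\|_{0,B^+}^2 + \bigl|\zeta\sqrt{-q_{,3}}\,\bar\partial^4 \eta_l(t)\cdot n_l(\eta_l(t))\bigr|_{0,B^0}^2\Bigr),
\end{equation*}
is uniformly Lipschitz in $t$. Since directly testing the momentum equation with $\bar\partial^4 v$ is not justified at the available regularity, I will regularize by horizontal difference quotients $\bar\partial^h$ and repeat the energy argument of Proposition \ref{energyest} with $\bar\partial^h\bar\partial^3$ in place of $\bar\partial^4$. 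The same computation, now legitimate, bounds $\tfrac{d}{dt}\mathcal{H}_h$ by $C(\|\eta(t)\|_{4.5}^2+\|v(t)\|_4^2)\le CM_0$ \emph{uniformly in $h$}, so that $|\mathcal{H}_h(t+\delta)-\mathcal{H}_h(t)|\le C\delta M_0$. Letting $h\to 0$ transfers this Lipschitz estimate to $\mathcal{H}$ itself.

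Continuity of the norm, combined with the weak continuity established above, gives strong continuity of the pair $(\zeta\bar\partial^4 v_l,\zeta\sqrt{-q_{,3}}\,\bar\partial^4\eta_l\cdot n_l(\eta_l))$ in the Hilbert space $L^2(B^+)\times L^2(B^0)$, by the standard fact that weak convergence together with convergence of norms implies strong convergence. Summing over charts and using the pressure bound \eqref{p_est} for $q_{,3}$, this yields $\bar\partial^4 v\in C([0,T];L^2(\Omega))$ and $\bar\partial^4\eta\cdot n(\eta)\in C([0,T];L^2(\Gamma))$. To close and obtain full $H^{4.5}$-continuity of $\eta$, I will express $\operatorname{curl}_\eta D\eta$ and $\operatorname{div}_\eta D\eta$ as explicit time integrals (using $\operatorname{curl}_\eta v_t=0$ together with \eqref{a1}--\eqref{a2b}, and $\operatorname{div}_\eta v=0$), which immediately yields Lipschitz continuity of these in $H^{2.5}$; Proposition \ref{prop1} applied in the Eulerian frame to $W=D\eta\circ\eta^{-1}$ on $\Omega(t)$, with the normal-trace continuity just established, then bootstraps to $\eta\in C([0,T];H^{4.5}(\Omega))$.

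The remaining continuity statements follow by similar reasoning: $v\in C([0,T];H^4(\Omega))$ by an analogous Hodge-type argument, $q\in C([0,T];H^{4.5}(\Omega))$ from the elliptic pressure estimate for \eqref{qpress}, $v_t\in C([0,T];H^{3.5}(\Omega))$ from the momentum equation $v_t=-A^TDq$, and $\operatorname{curl}_\eta v$ from its explicit integral expression. The main obstacle is making the horizontal-difference-quotient energy estimate \emph{genuinely uniform in $h$}: one must verify that commuting $\bar\partial^h$ past the cutoff $\zeta$ and past the nonlinear coefficients $A$, $J$, and $q$ generates only remainder terms controlled by $E(t)$ (rather than by higher powers), and that the Rayleigh-Taylor boundary term retains its good sign in the regularized identity --- precisely the boundary structure that drove the estimates of Proposition \ref{energyest}.
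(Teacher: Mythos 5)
Your proposal is correct and follows essentially the same route as the paper's proof: continuity one order down from the a priori bounds, weak continuity at top order via the Arzela--Ascoli argument of Section \ref{sec::asymp}, Lipschitz continuity of the combined energy quantity $\mathcal{H}$ via horizontal difference quotients $\bar\partial^h\bar\partial^3$, the weak-plus-norm-continuity-implies-strong-continuity argument in the product Hilbert space, and the final bootstrap through the Lagrangian curl/divergence identities and the Hodge estimate of Proposition \ref{prop1} applied to $W=D\eta\circ\eta^{-1}$. The uniformity-in-$h$ issue you flag at the end is precisely the point the paper also treats as routine.
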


\section*{Acknowledgments}
We thank the referee for carefully reading the paper and for providing a number of  suggestions that improved the presentation.
DC was supported by the Centre for Analysis and Nonlinear PDEs funded by the UK EPSRC grant EP/E03635X and the Scottish Funding Council.  SS was supported by the National Science Foundation under grant
DMS-1001850, and by the United States Department of Energy through  Idaho National
Laboratory LDRD Project NE-156.

\end{document}